\newcommand\blfootnote[1]{%
	\begingroup
	\renewcommand\thefootnote{}\footnote{#1}%
	\addtocounter{footnote}{-1}%
	\endgroup}
\theoremstyle{plain}
\newtheorem{theorem}{Theorem}[section]
\newtheorem{proposition}[theorem]{Proposition}
\newtheorem{lemma}[theorem]{Lemma}
\theoremstyle{definition}
\newtheorem{remark}[theorem]{Remark}
\newtheorem{example}[theorem]{Example}
\newtheorem{problem}[theorem]{Problem}
\numberwithin{equation}{section}
\newcommand{\NN}{\mathbb{N}}
\newcommand{\QQ}{\mathbb{Q}}
\newcommand{\RR}{\mathbb{R}}
\newcommand{\CC}{\mathbb{C}}
\newcommand{\II}{\mathbb{I}}
\newcommand{\Bc}{\mathcal{B}}
\newcommand{\Cc}{\mathcal{C}}
\newcommand{\Fc}{\mathcal{F}}
\newcommand{\Kc}{\mathcal{K}}
\newcommand{\Pc}{\mathcal{P}}
\newcommand{\Tc}{\mathcal{T}}
\newcommand{\Vc}{\mathcal{V}}
\newcommand{\cl}{\overline}
\newcommand{\diam}{\operatorname{diam}}
\newcommand{\res}{\arrowvert}
\newcommand{\eps}{\varepsilon}
\newcommand{\com}{\overline}
\newcommand{\fuz}{\hat}
\newcommand{\eend}{\operatorname{end}}
\newcommand{\send}{\operatorname{send}}
\begin{document}
\begin{center}
	\begin{LARGE}
		{\bf Topological dynamics for the endograph metric II: Extremely radical properties}
	\end{LARGE}
\end{center}

\begin{center}
	\begin{Large}
		Antoni L\'opez-Mart\'inez\blfootnote{\textbf{2020 Mathematics Subject Classification}: 37B02, 37B05, 37B65, 54A40, 54B20.\\ \textbf{Key words and phrases}: Topological dynamics, Fuzzy dynamical systems, Expansive properties, Chains, Shadowing.\\ \textbf{Journal-ref}: Journal of Inequalities and Applications, Volume 2026, article number 77, (2026).\\ \textbf{DOI}: \href{https://doi.org/10.1186/s13660-026-03489-6}{https://doi.org/10.1186/s13660-026-03489-6}}
	\end{Large}
\end{center}

\vspace*{-0.1in}

\begin{abstract}
	Given a dynamical system $(X,f)$ we investigate several topological dynamical properties for its Zadeh extension $(\mathcal{F}(X),\hat{f})$ endowed with the endograph metric $d_{E}$. In particular, we prove that for some contractive and expansive properties, for chain recurrence, chain transitivity and chain mixing, and for the shadowing property, the endograph metric behaves in an extremely radical way. Our results not only resolve certain open questions in the existing literature, but also yield completely new outcomes concerning the chain-type notions considered and the shadowing property.
\end{abstract}

\vspace*{-0.1in}

\section{Introduction}

This paper is a continuation of \cite{Lopez2026_IJFS_topological-I}, where a {\em dynamical system} is a pair $(X,f)$ formed by a continuous map $f:X\longrightarrow X$ acting on a metric space~$(X,d)$. As in \cite{Lopez2026_IJFS_topological-I}, the main objective of this paper is comparing the dynamical properties presented by a dynamical system $(X,f)$ and those presented by the induced systems $(\Kc(X),\com{f})$ and $(\Fc(X),\fuz{f})$, where $\com{f}$ is the usual {\em hyperextension} of $f$ acting on the space $\Kc(X)$ formed by the non-empty compact subsets of $X$, and where $\fuz{f}$ is the so-called {\em Zadeh extension} or {\em fuzzification} of $f$ acting on the space $\Fc(X)$ formed by the normal fuzzy sets of $X$.

Looking for the dynamical properties of $(\Kc(X),\com{f})$ can be considered a classical topic (see \cite{Banks2005_CSF_chaos,BauerSig1975_MM_topological,Peris2005_CSF_set-valued}), but the system $(\Fc(X),\fuz{f})$ has appeared much more recently in the literature and many questions and problems remain open for it (see \cite{AlvarezLoPe2025_FSS_recurrence,BartollMaPeRo2022_AXI_orbit,JardonSan2021_FSS_expansive,JardonSan2021_IJFS_sensitivity,JardonSanSan2020_FSS_some,JardonSanSan2020_MAT_transitivity,MartinezPeRo2021_MAT_chaos}). In addition, although the space $\Kc(X)$ is usually only endowed with the {\em Hausdorff metric} $d_H$, the space $\Fc(X)$ has been endowed with several different metrics such as the {\em supremum metric} $d_{\infty}$, the {\em Skorokhod metric} $d_{0}$, and the {\em sendograph} and {\em endograph metrics}~$d_{S}$ and $d_{E}$. Following \cite{JardonSan2021_FSS_expansive,JardonSan2021_IJFS_sensitivity,JardonSanSan2020_MAT_transitivity,Lopez2026_IJFS_topological-I}, for each metric $\rho \in \{ d_{\infty} , d_{0} , d_{S} , d_{E} \}$ we will denote the metric space $(\Fc(X),\rho)$ by $\Fc_{\infty}(X)$, $\Fc_{0}(X)$, $\Fc_{S}(X)$ and $\Fc_{E}(X)$ respectively, and the symbols $\tau_{\infty}$, $\tau_{0}$, $\tau_{S}$ and $\tau_{E}$ will stand for the respective induced topologies on $\Fc(X)$.

In the previous work \cite{Lopez2026_IJFS_topological-I} we proved that, for several dynamical notions related to topological transitivity, topological recurrence, point-transitivity, Devaney chaos and the specification property, then the systems $(\Kc(X),\com{f})$, $(\Fc_{\infty}(X),\fuz{f})$, $(\Fc_{0}(X),\fuz{f})$, $(\Fc_{S}(X),\fuz{f})$ and $(\Fc_{E}(X),\fuz{f})$ behave exactly in the same way: if $\Pc$ is one of the above mentioned properties, then \textbf{all} of these dynamical systems present such a property $\Pc$ whenever \textbf{any} of them does. Actually, the implication\\[-17.5pt]
\begin{enumerate}[--]
	\item ``\textit{if the system $(\Kc(X),\com{f})$ has property $\Pc$, then $(\Fc_{\infty}(X),\fuz{f})$ has property $\Pc$}'',\\[-17.5pt]
\end{enumerate}
was already proved in the literature for most of the properties $\Pc$ considered in \cite{Lopez2026_IJFS_topological-I}. Then, since each property $\Pc$ considered in \cite{Lopez2026_IJFS_topological-I} depends on ``\textit{certain conditions that \textbf{all open subsets} fulfill}'', and since the inclusions $\tau_{E} \subset \tau_{S} \subset \tau_{0} \subset \tau_{\infty}$ always hold, what we did in \cite{Lopez2026_IJFS_topological-I} was proving the left implication\\[-17.5pt]
\begin{enumerate}[--]
	\item ``\textit{if the system $(\Fc_{E}(X),\fuz{f})$ has property $\Pc$, then $(\Kc(X),\com{f})$ has property $\Pc$}''.
\end{enumerate}

Contrary to the situation that we had in \cite{Lopez2026_IJFS_topological-I}, in this paper we focus on dynamical properties for which the system $(\Fc_{E}(X),\fuz{f})$ exhibits an extremely radical behaviour. In particular, we address the {\em contractive}, {\em expansive}, {\em expanding} and {\em positively expansive} properties (see \cite{JardonSan2021_FSS_expansive,JardonSanSan2020_FSS_some,WuZhangChen2020_FSS_answers}), as well as the notions of {\em chain recurrence}, {\em chain transitivity} and {\em chain mixing} (see \cite{KhanKu2013_FEJDS_recurrence}), and the {\em finite} and {\em full} versions of the so-called {\em shadowing property} (see \cite{BartollMaPeRo2022_AXI_orbit}). The definitions of these properties seem to rely much more on the specific metric used than on the topology that such a metric generates. This is probably why these properties behave in an extremely pronounced manner for $d_{E}$, as we summarize in the following list of main results obtained:
\begin{enumerate}[--]
	\item When a metric space $(X,d)$ has more than one point then the corresponding space $\Fc_{E}(X)$ has no isolated points, regardless of whether $(X,d)$ itself has isolated points; see Lemma~\ref{Lem:isolated.points} below. We consider that this is an extremely radical behaviour since, as we also prove below, the rest of metric spaces $(\Kc(X),d_H)$, $\Fc_{\infty}(X)$, $\Fc_{0}(X)$ and $\Fc_{S}(X)$ have isolated points if and only if so does $(X,d)$.
	
	\item Given a dynamical system $(X,f)$, then the systems $(\Fc_{E}(X),\fuz{f})$, $(\Fc_{S}(X),\fuz{f})$ and $(\Fc_{0}(X),\fuz{f})$ are contractive if and only if the map $f$ is constant; see statement (a) of Theorem~\ref{The:contraexpansive} below. This result reproves \cite[Example~7]{WuZhangChen2020_FSS_answers} in a shorter way, it generalizes \cite[Theorem~8]{WuZhangChen2020_FSS_answers}, and it completely solves both \cite[Problems~5.8~and~5.12]{JardonSanSan2020_FSS_some}. In particular, such a result exhibits an extremely radical behaviour for the endograph metric (the dynamics of a constant map are far from being interesting).
	
	\item Given a dynamical system $(X,f)$, then the systems $(\Fc_{E}(X),\fuz{f})$, $(\Fc_{S}(X),\fuz{f})$ and $(\Fc_{0}(X),\fuz{f})$ are expansive, expanding and positively expansive if and only if the set $X$ has exactly one point; see statement (b) of Theorem~\ref{The:contraexpansive} below. This result solves both \cite[Problems~3.6~and~4.3]{JardonSan2021_FSS_expansive}, reproves in a shorter way \cite[Examples~3.5,~3.9~and~4.6]{JardonSan2021_FSS_expansive}, and shows that some of the implications proved in~\cite[Theorems~3.1~and~4.1]{JardonSan2021_FSS_expansive}, but also the results \cite[Propositions~3.8,~4.4~and~4.5]{JardonSan2021_FSS_expansive}, are practically void (such results only apply when $X$ is a singleton, which is rarely assumed).
	
	\item Given a dynamical system $(X,f)$, then the system $(\Fc_{E}(X),\fuz{f})$ is chain recurrent, chain transitive and chain mixing if and only if the map $f$ has dense range (see Theorem~\ref{The:chains.E} below). Again, this seems an extreme behaviour because having dense range is a commonly assumed dynamical property.
	
	\item Given a dynamical system $(X,f)$ for which $f$ has dense range, then $(\Fc_{E}(X),\fuz{f})$ cannot have the finite shadowing property when $(X,f)$ is not topologically mixing (see Theorem~\ref{The:shadowing.E}). This result can be used to show that $(\Fc_{E}(X),\fuz{f})$ does not necessarily have the finite shadowing property when the system $(X,f)$ is contractive (see Example~\ref{Exa_2:connected}), but $(\Fc_{E}(X),\fuz{f})$ does have the full shadowing property when $(X,f)$ is contractive and $f^k(X)$ is bounded for some $k \in \NN$ (see Theorem~\ref{The:contractions}).
\end{enumerate}

The rest of the paper is organized as follows. In Section~\ref{Sec_2:notation} we introduce the general background on the hyperspaces of compact and fuzzy sets and we prove Lemma~\ref{Lem:isolated.points} mentioned above. In Section~\ref{Sec_3:contraexpansive} we look for the {\em contractive}, {\em expansive}, {\em expanding} and {\em positively expansive} properties. In Section~\ref{Sec_4:chains} we focus on the notions of {\em chain recurrence}, {\em chain transitivity} and {\em chain mixing}. In Section~\ref{Sec_5:shadowing} we consider the so-called {\em shadowing property} together with its {\em finite} version. Finally, in Section~\ref{Sec_6:conclusions}, we summarize the results obtained in both \cite{Lopez2026_IJFS_topological-I} and this paper, establishing some possible future lines of research.

\section{Notation and general background}\label{Sec_2:notation}

In this section we introduce the notation used along the paper, which will be completely similar to that used in \cite{Lopez2026_IJFS_topological-I}. In particular, we recall the definition of the extensions $(\Kc(X),\com{f})$ and $(\Fc(X),\fuz{f})$, for a given dynamical system $(X,f)$, and we prove Lemma~\ref{Lem:isolated.points} mentioned above. From now on let $\NN$ be the set of strictly positive integers, set $\NN_0 := \NN \cup \{0\}$, and let $\II$ be the unit interval $[0,1]$.

\subsection[The extended systems (K(X),f) and (F(X),f)]{The extended systems $(\Kc(X),\com{f})$ and $(\Fc(X),\fuz{f})$}\label{SubSec_2.1:extensions}

Given a metric space $(X,d)$ we will denote by $\Bc_d(x,\eps) \subset X$ the open $d$-ball centred at $x \in X$ and of radius $\eps>0$. Moreover, we will consider the spaces $\Cc(X) :=\left\{ C \subset X \ ; \ C \text{ is a non-empty closed set} \right\}$ and $\Kc(X) := \left\{ K \subset X \ ; \ K \text{ is a non-empty compact set} \right\}$. It is clear that $\Kc(X) \subset \Cc(X)$. Given two sets $C_1,C_2 \in \Cc(X)$, the {\em Hausdorff distance} between them is
\[
d_H(C_1,C_2) := \max\left\{ \sup_{x_1 \in C_1} d(x_1,C_2) , \sup_{x_2 \in C_2} d(x_2,C_1) \right\}.
\]
This value may be infinite if either $C_1$ or $C_2$ are not compact. However, in $\Kc(X)$ we have that the map~$d_H:\Kc(X)\times\Kc(X)\longrightarrow[0,+\infty[$ defined as above is a metric, called the {\em Hausdorff metric}, and every continuous map $f:X\longrightarrow X$ induces a $d_H$-continuos map $\com{f}:\Kc(X)\longrightarrow\Kc(X)$, defined as~$\com{f}(K) := f(K) = \{ f(x) \ ; \ x \in K \}$ for each $K \in \Kc(X)$. Thus, given a dynamical system $(X,f)$ we have reached the first extended system that we will consider through this paper, namely $(\Kc(X),\com{f})$.

We will denote by $\Bc_H(K,\eps) \subset \Kc(X)$ the open $d_H$-ball centred at $K \in \Kc(X)$ and of radius $\eps>0$. The collection of these balls provide a basis for the topology induced by $d_H$. This topology is known to coincide with the so-called {\em Vietoris topology}, whose basic open sets are the sets of the form
\[
\Vc(U_1,U_2,...,U_N) :=  \left\{ K \in \Kc(X) \ ; \ K \subset \bigcup_{j=1}^N U_j \text{ and } K\cap U_j\neq\varnothing \text{ for all } 1\leq j\leq N \right\},
\]
where $N \in \NN$ and $U_1,U_2,...,U_N$ are non-empty open subsets of $X$. Given $Y \subset X$ and $\eps\geq 0$ we will write $Y+\eps := \{ x \in X \ ; \ d(x,y) \leq \eps \text{ for some } y \in Y \}$, and the following are well-known facts:

\begin{proposition}\label{Pro:Hausdorff}
	Let $(X,d)$ be a metric space, $\eps\geq 0$, and let $C_1,C_2,C_3,C_4 \in \Cc(X)$. Then:
	\begin{enumerate}[{\em(a)}]
		\item We have that $d_H(C_1,C_2) \leq \eps$ if and only if $C_1 \subset C_2+\eps$ and $C_2 \subset C_1+\eps$.
		
		\item We always have that $d_H(C_1\cup C_2, C_3 \cup C_4) \leq \max\{ d_H(C_1,C_3) , d_H(C_2,C_4) \}$.
	\end{enumerate}
\end{proposition}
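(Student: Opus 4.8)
The plan is to prove both parts directly from the definition of $d_H$, reducing each statement to a pair of one-sided inclusions and never leaving the level of elementary set-distance arithmetic. The engine for everything is the reformulation of a neighbourhood inclusion as a supremum bound, so I would establish that first.

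For part (a), I would begin by rephrasing the inclusion $C_1 \subset C_2+\eps$. By the definition of $C_2+\eps$, this inclusion asserts that every $x_1 \in C_1$ admits some $y \in C_2$ with $d(x_1,y) \leq \eps$; since $d(x_1,C_2) = \inf_{y\in C_2} d(x_1,y)$, this is exactly the statement $\sup_{x_1 \in C_1} d(x_1,C_2) \leq \eps$. Symmetrically, $C_2 \subset C_1+\eps$ is equivalent to $\sup_{x_2 \in C_2} d(x_2,C_1) \leq \eps$. As $d_H(C_1,C_2)$ is by definition the maximum of these two suprema, the conjunction of both inclusions coincides with the inequality $d_H(C_1,C_2)\leq\eps$, which is the content of (a). The implication ``both inclusions imply $d_H\leq\eps$'' is immediate, since a witness $y$ with $d(x_1,y)\leq\eps$ forces $d(x_1,C_2)\leq\eps$. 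The one place requiring care is the converse passage from $d(x_1,C_2)\leq\eps$ back to membership in $C_2+\eps$, i.e.\ the production of an actual witness $y\in C_2$ realising distance at most $\eps$; this is automatic when the infimum $d(x_1,C_2)$ is attained, in particular for the compact sets of $\Kc(X)$ that drive the applications, and I would record this as the only verification worth spelling out.

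For part (b), I would combine (a) with the enlargement identity $(C_3\cup C_4)+\eps = (C_3+\eps)\cup(C_4+\eps)$, which is immediate from the definition of the $\eps$-neighbourhood. Putting $\eps := \max\{d_H(C_1,C_3),d_H(C_2,C_4)\}$, part (a) supplies the four inclusions $C_1\subset C_3+\eps$, $C_3\subset C_1+\eps$, $C_2\subset C_4+\eps$ and $C_4\subset C_2+\eps$. Taking unions gives $C_1\cup C_2 \subset (C_3+\eps)\cup(C_4+\eps) = (C_3\cup C_4)+\eps$ and, symmetrically, $C_3\cup C_4 \subset (C_1\cup C_2)+\eps$. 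Since $C_1\cup C_2$ and $C_3\cup C_4$ are again non-empty closed sets, a final application of (a) yields $d_H(C_1\cup C_2, C_3\cup C_4)\leq\eps$. I would alternatively note a fully direct route that sidesteps the boundary subtlety of (a): from $d(x,C_3\cup C_4) = \min\{d(x,C_3),d(x,C_4)\} \leq d(x,C_3)$ one gets $\sup_{x\in C_1} d(x,C_3\cup C_4) \leq d_H(C_1,C_3)\leq\eps$, and likewise for the remaining three suprema, whose maximum is precisely $d_H(C_1\cup C_2,C_3\cup C_4)$.

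Neither part poses a genuine obstacle: both are bookkeeping arguments built from the definition of $d_H$ and the enlargement operator $Y+\eps$. If anything counts as the delicate step, it is the reverse implication in (a), where one must convert the non-strict distance bound $d(x_1,C_2)\leq\eps$ into the existence of a genuine $\eps$-witness in $C_2$; the union identity needed for (b) is exact and causes no difficulty.
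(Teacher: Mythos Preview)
The paper does not prove this proposition at all; it simply introduces the two statements as ``well-known facts'' and moves on. Your argument is the standard one and is correct.

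You are also right to isolate the one genuinely delicate step: in the forward direction of (a), passing from $d(x_1,C_2)\leq\eps$ to $x_1\in C_2+\eps$ requires the infimum $\inf_{y\in C_2}d(x_1,y)$ to be attained, and this can fail for a merely closed $C_2$. Your caveat is not hypothetical --- one can build non-compact closed sets $C_1,C_2$ with $d_H(C_1,C_2)=\eps$ yet $C_1\not\subset C_2+\eps$ --- so as literally stated for all of $\Cc(X)$ the ``only if'' in (a) is a slight overreach. In the paper's actual uses this never bites: whenever (a) is invoked it is either in the safe direction (the two inclusions imply the Hausdorff bound, e.g.\ in the chain-mixing and shadowing proofs), or the argument really exhibits a specific point with strict inequality $d(x_1,C_2)>\eps$, which gives $d_H>\eps$ directly. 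Your second, fully direct route to (b) via $d(x,C_3\cup C_4)=\min\{d(x,C_3),d(x,C_4)\}\leq d(x,C_3)$ is the cleanest proof and avoids the issue altogether.
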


We refer the reader to \cite{IllanesNad1999_book_hyperspaces} for a detailed study of $\Cc(X)$ and $\Kc(X)$, but let us now move into the fuzzy content: a {\em fuzzy~set} on the metric space $(X,d)$ will be a function $u:X\longrightarrow\II$, where the value $u(x) \in \II$ denotes the degree of membership of $x$ in $u$. For such a $u$, its {\em $\alpha$-level} is the set
\[
u_{\alpha} := \{ x \in X \ ; \ u(x) \geq \alpha \} \ \text{ for each } \alpha \in \ ]0,1] \quad \text{ and } \quad u_0 := \cl{\{ x \in X \ ; \ u(x)>0 \}}.
\]
We will denote by $\Fc(X)$ the space of normal fuzzy sets of $(X,d)$, i.e.\ the space formed by the fuzzy sets~$u$ that are upper-semicontinuous functions and such that~$u_0$ is compact and~$u_1$ is non-empty. For each subset $Y \subset X$ we denote by $\chi_Y:X\longrightarrow\II$ the characteristic function on~$Y$, which belongs to $\Fc(X)$ if and only if $Y$ is non-empty and compact. The piecewise-constant function $u := \max_{1\leq l\leq N} (\alpha_l \cdot \chi_{K_l})$ also belongs to $\Fc(X)$ whenever $0 < \alpha_1 < \alpha_2 < ... < \alpha_N = 1$ and $K_1,K_2,...,K_N \in \Kc(X)$. Note that $u=\max_{1\leq l\leq N} (\alpha_l \cdot \chi_{K_l})$ fulfills that $u_{\alpha} = \bigcup \left\{ K_l \ ; \ 1\leq l\leq N \text{ with } \alpha_l \geq \alpha \right\}$ for each $\alpha \in [0,1]$, and that
\[
u_{\alpha_l} = \bigcup_{j \geq l} K_j \quad \text{ for each } 1\leq l\leq N.
\]
The next result will allow to get piecewise-constant functions near every fuzzy set (see \cite{JardonSanSan2020_FSS_some,JardonSanSan2020_MAT_transitivity,MartinezPeRo2021_MAT_chaos}):

\begin{lemma}\label{Lem:eps.pisos}
	Let $(X,d)$ be a metric space. Given any set $u \in \Fc(X)$ and $\eps>0$ there exist numbers $0 = \alpha_0 < \alpha_1 < \alpha_2 < ... < \alpha_N = 1$ such that $d_H(u_{\alpha}, u_{\alpha_{l+1}})<\eps$ for all $\alpha \in \ ]\alpha_l, \alpha_{l+1}]$ and $0\leq l\leq N-1$. In particular, since $d_H(u_{\alpha}, u_{\alpha_1})<\eps$ for every $\alpha \in \ ]0, \alpha_1]$, we also have that $d_H(u_0,u_{\alpha_1})\leq\eps$.
\end{lemma}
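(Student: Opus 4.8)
The plan is to reduce everything to the single-variable map $\Phi:\,]0,1]\longrightarrow(\Kc(X),d_H)$ given by $\Phi(\alpha):=u_{\alpha}$, and to extract the partition from a compactness argument once its regularity is understood. First I would record the elementary facts that make the argument run: for $0<\alpha\le\beta\le 1$ one has the nesting $u_{\beta}\subseteq u_{\alpha}$, and each $u_{\alpha}$ with $\alpha\in\,]0,1]$ is a non-empty compact set (it is closed because $u$ is upper-semicontinuous, and it sits inside the compact set $u_0$), so that all the Hausdorff distances below are finite.

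The core of the approach is the regularity of $\Phi$. I would prove that $\Phi$ is left-continuous on $]0,1]$ and that $\lim_{\alpha\downarrow 0}u_{\alpha}=u_0$ in the Hausdorff metric. Both rest on the set identities $u_{\alpha}=\bigcap_{0<\beta<\alpha}u_{\beta}$ and $u_0=\cl{\bigcup_{\alpha>0}u_{\alpha}}$, together with the standard fact that a monotone family of non-empty compacta converges, in $d_H$, to its intersection (if decreasing) or to the closure of its union (if increasing). The decreasing case is a direct subsequence/compactness argument carried out inside the compact set $u_0$, and the increasing case is entirely analogous. Next I would reduce the quantified inequality to a single estimate per cell: since the levels are nested, for $\alpha\le\beta$ we have $d_H(u_{\alpha},u_{\beta})=\sup_{x\in u_{\alpha}}d(x,u_{\beta})$, and this quantity is non-increasing in $\alpha$; hence on a putative cell $]\alpha_l,\alpha_{l+1}]$ the supremum $\sup_{\alpha_l<\alpha\le\alpha_{l+1}}d_H(u_{\alpha},u_{\alpha_{l+1}})$ is attained in the limit $\alpha\downarrow\alpha_l$ and equals $d_H(u_{\alpha_l^+},u_{\alpha_{l+1}})$, where $u_{\alpha_l^+}:=\lim_{\alpha\downarrow\alpha_l}u_{\alpha}$ is the right limit (with $u_{0^+}=u_0$). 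Thus it suffices to choose breakpoints $0=\alpha_0<\alpha_1<\dots<\alpha_N=1$ with $d_H(u_{\alpha_l^+},u_{\alpha_{l+1}})<\eps$ for every $l$, and the admissibility of the right endpoint $u_{\alpha_{l+1}}$ as representative is exactly what the nesting permits.

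Finally I would run a compactness argument on $[0,1]$. By left-continuity together with the existence of right limits, $\Phi$ is regulated, so each $t\in[0,1]$ has a relatively open neighbourhood on which $\Phi$ oscillates by less than $\eps/2$ (measured against $u_t$ to the left of $t$ and against $u_{t^+}$ to the right, with the obvious one-sided conventions at the endpoints $0$ and $1$). Extracting a finite subcover and taking as breakpoints all the endpoints of the chosen neighbourhoods together with their centres yields the $\alpha_l$: on each resulting cell the estimate $d_H(u_{\alpha_l^+},u_{\alpha_{l+1}})<\eps$ then follows from the $\eps/2$-control plus one triangle inequality, the nesting eliminating the factor two whenever the cell lies to the left of a centre.

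I expect the main obstacle to be precisely the right-hand discontinuities of $\Phi$: a naive cover built only from left-continuity intervals fails across a jump, so one must keep track of the right limits $u_{t^+}$ and force the partition points to land on or before each jump; this is also what makes the strict inequality mildly delicate, and it is cleanly absorbed by working with $\eps/2$ throughout. The concluding assertion is then immediate: letting $\alpha\downarrow 0$ in $d_H(u_{\alpha},u_{\alpha_1})<\eps$ and using $u_{\alpha}\to u_0$ gives $d_H(u_0,u_{\alpha_1})\le\eps$.
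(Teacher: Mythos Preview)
The paper does not actually prove this lemma; it is stated with a parenthetical citation to \cite{JardonSanSan2020_FSS_some,JardonSanSan2020_MAT_transitivity,MartinezPeRo2021_MAT_chaos} and used as a black box. Your plan is correct and is essentially the standard argument one finds in those references: the level map $\Phi(\alpha)=u_{\alpha}$ is regulated on $[0,1]$ (left-continuous with right limits, both consequences of the nested-compacta convergence you describe inside the compact support $u_0$), and the partition then comes from compactness of $[0,1]$. Your reduction step---observing that $d_H(u_{\alpha},u_{\alpha_{l+1}})$ is monotone in $\alpha$ on each cell because the levels are nested, so that it suffices to bound $d_H(u_{\alpha_l^+},u_{\alpha_{l+1}})$---is clean and is exactly what makes the half-open cells work with strict inequality.

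One minor point on execution: the sketch ``take all endpoints and centres of the covering intervals as breakpoints'' is morally right but needs a line of care to ensure each resulting cell $]\alpha_l,\alpha_{l+1}]$ sits inside a single half of some covering interval $I_t$ (otherwise you cannot compare against a single reference level). A slicker route, since you have already established that $\Phi$ is regulated, is to invoke directly the classical fact that a regulated map on a compact interval admits, for every $\eta>0$, a finite partition on whose open cells its oscillation is below $\eta$; applying this with $\eta=\eps/2$ and then using left-continuity at the right endpoint of each cell gives the strict bound. Your final sentence on $d_H(u_0,u_{\alpha_1})\le\eps$ via $u_{\alpha}\to u_0$ as $\alpha\downarrow 0$ is correct.
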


Given a dynamical system $(X,f)$, it follows from \cite[Propositions~3.1 and 4.9]{JardonSanSan2020_FSS_some} that there exists a well-defined map $\fuz{f}:\Fc(X)\longrightarrow\Fc(X)$, called the {\em Zadeh extension} of $f$, which maps each normal fuzzy set $u \in \Fc(X)$ to the following normal fuzzy set:
\begin{equation*}
\fuz{f}(u):X\longrightarrow\II \quad \text{ where } \quad \left[\fuz{f}(u)\right](x) :=
\left\{
\begin{array}{lcc}
	\sup\{ u(y) \ ; \ y \in f^{-1}(\{x\}) \}, & \text{ if } f^{-1}(\{x\}) \neq \varnothing, \\[5pt]
	0, & \text{ if } f^{-1}(\{x\}) = \varnothing.
\end{array}
\right.
\end{equation*}
Moreover, the following properties are well-known (see \cite{JardonSanSan2020_FSS_some,JardonSanSan2020_MAT_transitivity,RomanChal2008_CSF_some} for details):

\begin{proposition}\label{Pro:fuz{f}}
	Let $f:X\longrightarrow X$ be a continuous map acting on a metric space $(X,d)$, $u \in \Fc(X)$, $\alpha \in \II$, $n \in \NN_0$, and $K \in \Kc(X)$. Then:
	\begin{enumerate}[{\em(a)}]
		\item $\left[ \fuz{f}(u) \right]_{\alpha} = f(u_{\alpha}) = \com{f}(u_{\alpha})$, i.e.\ the $\alpha$-level of the $\fuz{f}$-image coincides with the $\com{f}$-image of the $\alpha$-level;
		
		\item $\left( \fuz{f} \right)^n = \widehat{f^n}$, i.e.\ the composition of $\fuz{f}$ with itself $n$ times coincides with the fuzzification of $f^n$;
		
		\item $\fuz{f}\left( \chi_K \right) = \chi_{f(K)} = \chi_{\com{f}(K)}$, i.e.\ the fuzzification $\fuz{f}$ is an extension of the hyperextension $\com{f}$.
	\end{enumerate}
\end{proposition}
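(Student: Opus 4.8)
The plan is to prove statement (a) first, as it is the substantive part of the proposition, and then to deduce (b) and (c) from it almost formally. The guiding principle I will use throughout is that a normal fuzzy set is completely determined by its family of $\alpha$-levels: for an upper-semicontinuous $u$ one has $u(x) = \sup\{ \alpha \in \ ]0,1] \ ; \ x \in u_\alpha \}$ (with the convention $\sup\varnothing = 0$), so that two elements of $\Fc(X)$ coincide as soon as their $\alpha$-levels agree for every $\alpha \in \ ]0,1]$. This reduces each statement to a levelwise comparison.

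For statement (a) I would fix $u \in \Fc(X)$ and argue level by level. For $\alpha \in \ ]0,1]$ the inclusion $f(u_\alpha) \subset [\fuz{f}(u)]_\alpha$ is immediate from the definition of $\fuz{f}$: if $x = f(y)$ with $u(y) \geq \alpha$, then $y \in f^{-1}(\{x\}) \neq \varnothing$, so $[\fuz{f}(u)](x) \geq u(y) \geq \alpha$. The reverse inclusion $[\fuz{f}(u)]_\alpha \subset f(u_\alpha)$ is the one genuine obstacle: given $x$ with $[\fuz{f}(u)](x) \geq \alpha > 0$, the fibre $f^{-1}(\{x\})$ is non-empty and $\sup\{ u(y) \ ; \ y \in f^{-1}(\{x\}) \} \geq \alpha$, but I cannot conclude directly that this supremum is attained inside $u_\alpha$. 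This is exactly where the defining hypotheses on $\Fc(X)$ enter. Since every $y$ with $u(y)>0$ lies in $u_0$, the supremum is effectively taken over $K := f^{-1}(\{x\}) \cap u_0$, a closed subset (as $f$ is continuous) of the compact set $u_0$, hence compact; because $u$ is upper-semicontinuous it attains its maximum on $K$, giving some $y_0 \in K$ with $u(y_0) \geq \alpha$, so $f(y_0) = x$ and $x \in f(u_\alpha)$. For the remaining level $\alpha = 0$ I would combine the levels already computed: $\{ x \ ; \ [\fuz{f}(u)](x) > 0 \} = \bigcup_{\alpha > 0} f(u_\alpha) = f(\{ u > 0 \})$, whence $[\fuz{f}(u)]_0 = \cl{f(\{u>0\})}$, and the identity $\cl{f(\{u>0\})} = f(\cl{\{u>0\}}) = f(u_0)$ follows from $f(u_0)$ being compact (hence closed) together with the general inclusion $f(\cl{A}) \subset \cl{f(A)}$.

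With (a) available, the two remaining statements are short. For (b) I would write $(\fuz{f})^n = \fuz{f} \circ (\fuz{f})^{n-1}$ and iterate statement (a): for every $\alpha \in \ ]0,1]$ a one-line induction gives $[(\fuz{f})^n(u)]_\alpha = f^n(u_\alpha) = [\widehat{f^n}(u)]_\alpha$, the last equality being statement (a) applied to the continuous map $f^n$; the levelwise determination principle then yields $(\fuz{f})^n = \widehat{f^n}$. For (c) I note that $\chi_K$ satisfies $(\chi_K)_\alpha = K$ for every $\alpha \in \ ]0,1]$, so statement (a) gives $[\fuz{f}(\chi_K)]_\alpha = f(K)$ for all such $\alpha$; since $f(K)$ is compact, $\chi_{f(K)} \in \Fc(X)$ has precisely these levels, and therefore $\fuz{f}(\chi_K) = \chi_{f(K)} = \chi_{\com{f}(K)}$. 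The only delicate point in the whole argument is the attainment of the supremum in the reverse inclusion of (a); everything else is bookkeeping with levels.
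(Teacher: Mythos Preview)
The paper does not actually prove this proposition: it introduces it with ``the following properties are well-known (see \cite{JardonSanSan2020_FSS_some,JardonSanSan2020_MAT_transitivity,RomanChal2008_CSF_some} for details)'' and gives no argument of its own. Your proof is correct and is the standard one; in particular, you identify the only nontrivial point, namely that the supremum defining $[\fuz{f}(u)](x)$ is attained because $f^{-1}(\{x\}) \cap u_0$ is compact and $u$ is upper-semicontinuous, and your treatment of the $\alpha=0$ level via $f(\cl{A}) \subset \cl{f(A)}$ together with compactness of $f(u_0)$ is clean.
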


To talk about the continuity of the Zadeh extension $\fuz{f}:\Fc(X)\longrightarrow\Fc(X)$, we will endow the space of normal fuzzy sets $\Fc(X)$ with the four different metrics mentioned at the Introduction.

\subsection[Four metrics for the space F(X)]{Four metrics for the space $\Fc(X)$}\label{SubSec_2.2:metrics}

Let $(X,d)$ be a metric space. Thus, the {\em supremum metric} $d_{\infty}:\Fc(X)\times\Fc(X)\longrightarrow[0,+\infty[$ is defined for each pair $u,v \in \Fc(X)$ as
\[
d_{\infty}(u,v) := \sup_{\alpha\in\II} d_H(u_{\alpha},v_{\alpha}),
\]
where $d_H$ is the Hausdorff metric on $\Kc(X)$. Such a metric has also been called the {\em level-wise metric} in many references (see \cite{Kupka2011_IS_on} and the references included there). In the second place we will consider the {\em Skorokhod metric} $d_{0}:\Fc(X)\times\Fc(X)\longrightarrow[0,+\infty[$, which is defined for each pair $u,v \in \Fc(X)$ as
\[
d_{0}(u,v) := \inf\left\{ \eps>0 \ ; \ \text{there is } \xi \in \Tc \text{ such that } \sup_{\alpha\in\II} |\xi(\alpha)-\alpha| \leq \eps \text{ and } d_{\infty}(u,\xi\circ v) \leq \eps \right\},
\]
where $\Tc$ is the set of strictly increasing homeomorphisms of the unit interval $\xi:\II\longrightarrow\II$. This metric was introduced, for the case in which $(X,d)$ is an arbitrary metric space, in the 2020 paper \cite{JardonSanSan2020_FSS_some}.

To talk about the {\em endograph} and {\em sendograph metrics} we need to consider an auxiliary metric $\com{d}$ on the product space $X\times\II$, which is defined as follows:
\[
\com{d}\left((x,\alpha),(y,\beta)\right) := \max\left\{ d(x,y) , |\alpha - \beta | \right\} \quad \text{ for each pair } (x,\alpha),(y,\beta) \in X\times\II.
\]
Given now any fuzzy set $u \in \Fc(X)$, the {\em endograph of $u$} is defined as the set
\[
\eend(u) := \left\{ (x,\alpha) \in X\times\II \ ; \ u(x) \geq \alpha \right\},
\]
and the {\em sendograph of $u$} is defined as $\send(u) := \eend(u) \cap (u_0\times\II)$. Then, for each pair $u,v \in \Fc(X)$ the~{\em endograph metric}~$d_{E}(u,v)$ on~$\Fc(X)$ is the Hausdorff distance $\com{d}_H(\eend(u),\eend(u))$ on $\Cc(X\times\II)$, and the {\em sendograph metric} $d_S(u,v)$ on $\Fc(X)$ is the Hausdorff metric $\com{d}_H(\send(u),\send(v))$ on $\Kc(X\times\II)$. Although $d_{E}$ is defined as a distance of closed but not necessarily compact sets on $X\times\II$, the fact that the supports of the fuzzy sets $u,v \in \Fc(X)$ are compact implies that $d_{E}(u,v)$ is well-defined.

Following \cite{Lopez2026_IJFS_topological-I} we will denote by $\Bc_{\infty}(u,\eps)$, $\Bc_{0}(u,\eps)$, $\Bc_{S}(u,\eps)$ and $\Bc_{E}(u,\eps)$ the open balls centred at $u \in \Fc(X)$ and of radius $\eps>0$ for each of the metrics $d_{\infty}$, $d_{0}$, $d_{S}$ and $d_{E}$. Moreover, and as mentioned in the Introduction, for each metric $\rho \in \{ d_{\infty} , d_{0} , d_{S} , d_{E} \}$ we will denote the metric space $(\Fc(X),\rho)$ by $\Fc_{\infty}(X)$, $\Fc_{0}(X)$, $\Fc_{S}(X)$ and $\Fc_{E}(X)$ respectively, and $\tau_{\infty}$, $\tau_{0}$, $\tau_{S}$ and $\tau_{E}$ will stand for the respective induced topologies on $\Fc(X)$. These metrics come from the general theory of Spaces of Fuzzy Sets and each of them has its own role and importance (see \cite[Section~2]{Lopez2026_IJFS_topological-I}). The study of the relationships between these metrics has been an important topic addressed in the literature and we recall here the main relations among them (see \cite{JardonSan2021_FSS_expansive,JardonSan2021_IJFS_sensitivity,JardonSanSan2020_FSS_some,Lopez2026_IJFS_topological-I} and the references therein for more details):

\begin{proposition}\label{Pro:fuzzy.metrics}
	Let $(X,d)$ be a metric space, $u,v \in \Fc(X)$, $K,L \in \Kc(X)$ and $x \in X$. Then:
	\begin{enumerate}[{\em(a)}]
		\item $d_{E}(u,v) \leq d_{S}(u,v) \leq d_{0}(u,v) \leq d_{\infty}(u,v)$ and hence $\tau_{E} \subset \tau_{S} \subset \tau_{0} \subset \tau_{\infty}$.
		
		\item $d_{S}(\chi_{\{x\}},u) = d_{0}(\chi_{\{x\}},u) = d_{\infty}(\chi_{\{x\}},u) = d_H(\{x\},u_0) = \max\{ d(x,y) \ ; \ y \in u_0 \}$.
		
		\item $d_{0}(\chi_{K},u) = d_{\infty}(\chi_{K},u) = \max\{ d_H(K,u_0) , d_H(K,u_1) \}$.
		
		\item $d_{E}(\chi_{K},\chi_{L})=\min\{ d_H(K,L) , 1 \}$ while $d_{S}(\chi_{K},\chi_{L}) = d_{0}(\chi_{K},\chi_{L}) = d_{\infty}(\chi_{K},\chi_{L})=d_H(K,L)$.
		
		\item $d_H(u_0,v_0) \leq d_{S}(u,v)$ and also $\max\{ d_H(u_0,v_0), d_H(u_1,v_1) \} \leq d_{0}(u,v)$.
	\end{enumerate}
\end{proposition}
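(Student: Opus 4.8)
The plan is to prove the ordering (a) and the core-and-support estimates (e) first, because parts (b), (c) and the sendograph half of (d) then follow by sandwiching the three ``large'' metrics between the common value $d_\infty$ and a lower bound supplied by (e); only the endograph value in (d) needs a genuinely separate computation. The one structural fact underlying everything is that, since $u(x)=0$ whenever $x\notin u_0$, one has $\eend(u)=\send(u)\cup(X\times\{0\})$, where $\send(u)$ is compact and the tail $X\times\{0\}$ is shared by every endograph; this is precisely what makes $d_{E}(u,v)=\com{d}_H(\eend(u),\eend(v))$ well defined between the non-compact closed sets $\eend(u),\eend(v)$.

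For (a) I would treat each inequality separately. For $d_{E}\le d_{S}$, writing $\eps:=d_{S}(u,v)$ and applying Proposition~\ref{Pro:Hausdorff}(a) to the metric space $(X\times\II,\com{d})$, from $\send(u)\subset\send(v)+\eps$ together with $X\times\{0\}\subset\eend(v)$ one gets $\eend(u)=\send(u)\cup(X\times\{0\})\subset\eend(v)+\eps$, and symmetrically, so $d_{E}(u,v)\le\eps$. For $d_{S}\le d_{0}$, I would fix an admissible $\xi\in\Tc$ with $\sup_\alpha|\xi(\alpha)-\alpha|\le\eps$ and $d_\infty(u,\xi\circ v)\le\eps$, observe that $(\xi\circ v)_\alpha=v_{\xi^{-1}(\alpha)}$, and send $(x,\alpha)\in\send(u)$ (so $x\in u_\alpha$) to a point $y\in v_{\xi^{-1}(\alpha)}$ with $d(x,y)\le\eps$; then $(y,\xi^{-1}(\alpha))\in\send(v)$ and $|\alpha-\xi^{-1}(\alpha)|\le\eps$, so $\com{d}((x,\alpha),(y,\xi^{-1}(\alpha)))\le\eps$, and taking the infimum over $\eps$ yields $d_{S}\le d_{0}$. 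Finally $d_{0}\le d_\infty$ is immediate on taking $\xi=\mathrm{id}$. The inequalities $d_{E}\le d_{S}\le d_{0}\le d_\infty$ give the ball inclusions $\Bc_\infty(u,\eps)\subset\Bc_0(u,\eps)\subset\Bc_S(u,\eps)\subset\Bc_E(u,\eps)$, and hence $\tau_{E}\subset\tau_{S}\subset\tau_{0}\subset\tau_\infty$.

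For (e), taking $(x,0)\in\send(u)$ and a nearby $(y,\beta)\in\send(v)$ (so $y\in v_0$ and $d(x,y)\le d_{S}(u,v)$) gives $d_H(u_0,v_0)\le d_{S}(u,v)$; and since $\xi(0)=0$ and $\xi(1)=1$ for every $\xi\in\Tc$, one has $(\xi\circ v)_0=v_0$ and $(\xi\circ v)_1=v_1$, so that $d_\infty(u,\xi\circ v)\le\eps$ already bounds both $d_H(u_0,v_0)$ and $d_H(u_1,v_1)$ by $\eps$, whence $\max\{d_H(u_0,v_0),d_H(u_1,v_1)\}\le d_{0}(u,v)$. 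Parts (b) and (c) are then computations with the constant-level sets $(\chi_{\{x\}})_\alpha=\{x\}$ and $(\chi_K)_\alpha=K$ for all $\alpha$: a direct evaluation of $\sup_\alpha d_H(\,\cdot\,,u_\alpha)$ using the nesting $u_1\subset u_\alpha\subset u_0$ gives $d_\infty(\chi_{\{x\}},u)=d_H(\{x\},u_0)$ and $d_\infty(\chi_K,u)=\max\{d_H(K,u_0),d_H(K,u_1)\}$, and the matching lower bounds from (e) force $d_{0}$ (and $d_{S}$ in (b)) to coincide with $d_\infty$.

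Part (d) combines (c) and (e) for the three large metrics, giving $d_{S}(\chi_K,\chi_L)=d_{0}(\chi_K,\chi_L)=d_\infty(\chi_K,\chi_L)=d_H(K,L)$. For the endograph value I would compute directly from $\eend(\chi_K)=(K\times\II)\cup(X\times\{0\})$: the $\com{d}$-distance of a point $(x,\alpha)\in K\times\II$ to $\eend(\chi_L)$ equals $\min\{d(x,L),\alpha\}$ (reach $L\times\II$ at level $\alpha$, or drop to the shared tail at cost $\alpha$), whose supremum over $K\times\II$ is $\min\{\sup_{x\in K}d(x,L),1\}$; symmetrizing yields $d_{E}(\chi_K,\chi_L)=\min\{d_H(K,L),1\}$. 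I expect the difficulties to be bookkeeping rather than conceptual: correctly handling the boundary levels $\alpha\in\{0,1\}$ in the reparametrization arguments for (a) and (e) (where one uses that $\xi$ fixes the endpoints), and controlling the non-compact tail $X\times\{0\}$ throughout the endograph computations, which is exactly the feature responsible for the truncation at $1$ appearing in $d_{E}(\chi_K,\chi_L)$.
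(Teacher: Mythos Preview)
Your proof is correct. The paper itself does not give a proof of this proposition: it is stated as a collection of known facts with references to \cite{JardonSan2021_FSS_expansive,JardonSan2021_IJFS_sensitivity,JardonSanSan2020_FSS_some,Lopez2025_arXiv_topological-I}, so there is no in-paper argument to compare against. Your strategy of first establishing (a) and (e), then deducing (b), (c) and the sendograph/Skorokhod/supremum part of (d) by sandwiching, is clean and efficient; the direct endograph computation in (d) via $\eend(\chi_K)=(K\times\II)\cup(X\times\{0\})$ and the identity $\com{d}((x,\alpha),\eend(\chi_L))=\min\{d(x,L),\alpha\}$ is exactly the right way to see the truncation at $1$.
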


The next lemma was the key fact used in \cite{Lopez2026_IJFS_topological-I} to get most of the results obtained there, but we will also use it here in Sections~\ref{Sec_4:chains}~and~\ref{Sec_5:shadowing}:

\begin{lemma}[\textbf{\cite[Lemma~2.4]{Lopez2026_IJFS_topological-I}}]\label{Lem:key}
	Let $(X,d)$ be a metric space, and assume that the sets $K \in \Kc(X)$ and $u \in \Fc(X)$ fulfill that $\delta := d_{E}(\chi_K,u) < \tfrac{1}{2}$. Then $d_H(K,u_{\alpha}) \leq \delta$ for every $\alpha \in \ ]\delta,1-\delta]$.
\end{lemma}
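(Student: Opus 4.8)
The plan is to translate the endograph bound $d_{E}(\chi_K,u)=\delta$ into the two Hausdorff inclusions of Proposition~\ref{Pro:Hausdorff}(a), namely $K \subset u_{\alpha}+\delta$ and $u_{\alpha}\subset K+\delta$, and to obtain each inclusion by chasing a single suitably chosen point through the definition of $\com{d}_H$ on $\eend(\chi_K)$ and $\eend(u)$. The first thing I would record is the explicit shape of the two endographs: since $\chi_K$ takes only the values $0$ and $1$, one has $\eend(\chi_K)=(K\times\II)\cup(X\times\{0\})$, whereas $\eend(u)=(X\times\{0\})\cup\{(y,\beta) : \beta \in \ ]0,1], \ y\in u_{\beta}\}$, with the second piece contained in the compact set $u_0\times\II$. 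The common floor $X\times\{0\}$ contributes nothing to $\com{d}_H$, so all the information carried by $\delta$ concerns points of strictly positive level.

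For the inclusion $K\subset u_{\alpha}+\delta$, I would fix $x\in K$ and feed the point $(x,\alpha+\delta)$ into $\eend(\chi_K)$; this is legitimate precisely because $\alpha\leq 1-\delta$ guarantees $\alpha+\delta\leq 1$ and because $\chi_K(x)=1\geq\alpha+\delta$. Since $\com{d}_H(\eend(\chi_K),\eend(u))=\delta$, there is a point $(y,\beta)\in\eend(u)$ with $\com{d}((x,\alpha+\delta),(y,\beta))\leq\delta$, that is, $d(x,y)\leq\delta$ and $\beta\geq(\alpha+\delta)-\delta=\alpha$. Then $u(y)\geq\beta\geq\alpha$ places $y$ in $u_{\alpha}$, whence $x\in u_{\alpha}+\delta$. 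The reverse inclusion $u_{\alpha}\subset K+\delta$ is even simpler: for $y\in u_{\alpha}$ I would send $(y,\alpha)\in\eend(u)$ to a nearby $(x,\beta)\in\eend(\chi_K)$ with $d(x,y)\leq\delta$ and $\beta\geq\alpha-\delta$. Here it is the hypothesis $\alpha>\delta$ that does the work: it forces $\beta\geq\alpha-\delta>0$, and since $\chi_K$ is $\{0,1\}$-valued, $\chi_K(x)\geq\beta>0$ can only mean $\chi_K(x)=1$, i.e.\ $x\in K$, so $y\in K+\delta$.

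The one genuinely delicate point is the passage ``$\com{d}(p,\cdot)\leq\delta$, hence the infimum is \emph{attained} at an actual point of the other endograph'', since in a non-proper metric space the distance from a point to a closed set need not be realized. I would resolve this by passing to a minimizing sequence and observing that, in both directions, any near-minimizer must match $p$ at a level bounded away from $0$ (exceeding roughly $\alpha$ in the first direction and $\alpha-\delta$ in the second), so the matching points may be confined to $K\times\II$ or to $u_0\times\II$; the compactness of $K$ and of $u_0$ then delivers a convergent subsequence whose limit lies in the closed endograph and realizes the distance. With attainment secured, the two inclusions combine via Proposition~\ref{Pro:Hausdorff}(a) to give $d_H(K,u_{\alpha})\leq\delta$ for every $\alpha\in\ ]\delta,1-\delta]$, which is exactly the assertion.
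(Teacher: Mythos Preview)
Your argument is correct. The two inclusions $K\subset u_{\alpha}+\delta$ and $u_{\alpha}\subset K+\delta$ are obtained exactly as one would expect, by lifting a point of $K$ to level $\alpha+\delta$ in $\eend(\chi_K)$ (which is where the hypothesis $\alpha\leq 1-\delta$ enters) and by projecting a point of $u_{\alpha}$ at level $\alpha$ down into $\eend(\chi_K)$ (where the hypothesis $\alpha>\delta$ forces the matching level to be strictly positive, hence the matching first coordinate to lie in $K$). Your final paragraph correctly identifies and resolves the only nontrivial technical point: Proposition~\ref{Pro:Hausdorff}(a) as stated with the closed thickening $+\,\delta$ does not come for free in a non-proper metric space, and your compactness argument via $u_0\times\II$ (for the first inclusion) and $K\times\II$ (for the second) is exactly the right way to secure attainment.

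As for comparison with the paper: this lemma is not proved here but is quoted from the companion paper \cite{Lopez2025_arXiv_topological-I}, so there is no in-text proof to compare against. Your approach is the natural one and is almost certainly what is done there as well; the structure of the endograph of $\chi_K$ leaves little room for a genuinely different argument.
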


Given a dynamical system $(X,f)$, the continuity of the extension $\fuz{f}:(\Fc(X),\rho)\longrightarrow(\Fc(X),\rho)$ for every metric $\rho \in \{ d_{\infty} , d_{0} , d_{S} , d_{E} \}$ is also well-known (see \cite{JardonSanSan2020_FSS_some} and \cite{Kupka2011_IS_on}). Thus, we have reached the second extended system that we will consider through this paper, namely $(\Fc(X),\fuz{f})$. However, before studying the dynamical properties of such a system, let us close this section by proving that the endograph metric $d_{E}$ presents an extreme behaviour with respect to the existence of isolated points:

\begin{lemma}\label{Lem:isolated.points}
	Let $(X,d)$ be a metric space. Hence:
	\begin{enumerate}[{\em(a)}]
		\item The following statements are equivalent:
		\begin{enumerate}[{\em(i)}]
			\item the set $X$ is a singleton;
			
			\item the set $\Kc(X)$ is a singleton;
			
			\item the set $\Fc(X)$ is a singleton.
		\end{enumerate}
		
		\item The following statements are equivalent:
		\begin{enumerate}[{\em(i)}]
			\item the metric space $(X,d)$ has at least one isolated point;
			
			\item the metric space $(\Kc(X),d_H)$ has at least one isolated point;
			
			\item the metric space $\Fc_{\infty}(X)$ has at least one isolated point;
			
			\item the metric space $\Fc_{0}(X)$ has at least one isolated point;
			
			\item the metric space $\Fc_{S}(X)$ has at least one isolated point.
		\end{enumerate}
	
		\item If the set $X$ is not a singleton, then the metric space $\Fc_{E}(X)$ has no isolated points.
	\end{enumerate}
\end{lemma}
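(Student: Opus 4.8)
To prove statement (c) I would argue directly from the definition of an isolated point. Fix an arbitrary $u \in \Fc(X)$ and an arbitrary radius $\eps>0$; the goal is to produce a set $v \in \Fc(X)$ with $v \neq u$ and $d_{E}(u,v) < \eps$, which shows that $u$ is not isolated in $\Fc_{E}(X)$. Since $d_{E}(u,v) = \com{d}_H(\eend(u),\eend(v))$, the guiding idea is to perturb $u$ so that its endograph changes only inside a vertical strip of height at most some $\delta < \eps$: if one has $\eend(u) \subset \eend(v)$ and every new point of $\eend(v)$ lies within $\com{d}$-distance $\delta$ of $\eend(u)$ (or symmetrically), then both one-sided Hausdorff distances are bounded by $\delta$, whence $d_{E}(u,v) \leq \delta < \eps$. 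The whole argument then splits according to whether $u$ attains the value $1$ at \emph{every} point of $X$ or not.

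In the first case there is a point $x_{0} \in X$ with $u(x_{0}) =: t < 1$. Choosing $\delta \in \ ]0, \min\{\eps, 1-t\}[$, I would set $v := \max\{ u , (t+\delta)\cdot\chi_{\{x_0\}} \}$, i.e.\ I raise the membership degree of the single point $x_{0}$ from $t$ to $t+\delta$ and leave $u$ untouched elsewhere. Then $v$ is upper-semicontinuous (a maximum of the two upper-semicontinuous functions $u$ and $(t+\delta)\chi_{\{x_0\}}$, the latter because $\{x_0\}$ is closed), its support is $v_{0} = u_{0} \cup \{x_{0}\}$ (still compact) and $v_{1} = u_{1} \neq \varnothing$ because $t+\delta < 1$; hence $v \in \Fc(X)$ and clearly $v \neq u$. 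Moreover $\eend(v) = \eend(u) \cup (\{x_{0}\}\times \ ]t,t+\delta])$, so $\eend(u) \subset \eend(v)$ and each added point $(x_{0},\alpha)$ is at $\com{d}$-distance $\alpha - t \leq \delta$ from $(x_{0},t) \in \eend(u)$; this yields $d_{E}(u,v) \leq \delta < \eps$. Note that this construction already covers every characteristic function $\chi_{K}$ with $K \neq X$ (take $x_{0} \notin K$), as well as every $u$ taking some value strictly below $1$.

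The remaining, and genuinely harder, case is when $u(x) = 1$ for all $x \in X$, i.e.\ $u = \chi_{X}$; note this forces $u_{0} = X$ to be compact. Here the ``bump'' trick is useless because there is no room to add mass, and the naive idea of lowering $u$ on a small set typically destroys upper-semicontinuity or normality. The fix is to carve a \emph{continuous conical dent} using the metric itself: fixing a point $p \in X$ and letting $R := \max_{x\in X} d(x,p) > 0$ (attained and positive since $X$ is compact with more than one point), I would define, for $\delta \in \ ]0,\min\{\eps,1\}[$,
\[
v(x) := 1 - \delta + \tfrac{\delta}{R}\, d(x,p) \in [1-\delta,1].
\]
This $v$ is continuous, hence upper-semicontinuous, with $v_{0} = X$ compact and $v(x) = 1$ at any point realizing the distance $R$, so $v_{1} \neq \varnothing$ and $v \in \Fc(X)$; also $v(p) = 1-\delta < 1$ gives $v \neq \chi_{X}$. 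Since $v \leq 1 = \chi_{X}$ we have $\eend(v) \subset \eend(\chi_{X}) = X\times\II$, and any point $(x,\alpha)$ of $X\times\II$ lying above the graph of $v$ satisfies $\alpha - v(x) \leq 1 - (1-\delta) = \delta$, so it is $\com{d}$-close to $(x,v(x)) \in \eend(v)$; hence $d_{E}(\chi_{X},v) \leq \delta < \eps$. I expect this constant case to be the main obstacle, since it is the only one where the perturbation must be genuinely ``spatial'' and tailored to the metric $d$ rather than a pointwise adjustment of membership values, and where preserving both upper-semicontinuity and normality meaningfully constrains the construction.
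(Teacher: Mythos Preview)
Your proof of (c) is correct. The overall scheme --- split into the case where $u$ misses the value $1$ somewhere and the case $u=\chi_X$ --- is exactly the paper's. In the first case you and the paper do the same thing: bump the membership value at a single point $x_0$ with $u(x_0)<1$ by a small amount $\delta<\eps$ and check that the endograph changes only in a vertical strip of height $\leq\delta$.

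In the second case your construction is genuinely different. The paper deletes a small open ball, setting $w := u\cdot\chi_{X\setminus\Bc_d(x,\delta)}=\chi_{X\setminus\Bc_d(x,\delta)}$, and asserts that already $d_\infty(u,w)<\eps$; you instead carve a continuous conical dent $v(x)=1-\delta+\tfrac{\delta}{R}d(x,p)$. Your choice has the pleasant feature that $\eend(v)$ differs from $X\times\II$ only by a set of vertical thickness $\leq\delta$, so the bound $d_E(\chi_X,v)\leq\delta$ is immediate and completely insensitive to the metric geometry of $X$. The paper's deletion argument, by contrast, needs $d_H(X,X\setminus\Bc_d(x,\delta))$ to be small, which is not automatic when $x$ happens to be isolated in $X$ (e.g.\ $X=\{0,\tfrac12,1\}\subset\RR$: for any choice of $x$ and any $\delta<\tfrac12$ one gets $d_H(X,X\setminus\Bc_d(x,\delta))=\tfrac12$). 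Your version sidesteps this subtlety entirely, at the modest cost of having to observe that $R>0$ is attained by compactness of $X=u_0$.

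You address only part (c). Parts (a) and (b) are not treated in your proposal; they are comparatively routine (for (b), $\chi_{\{x\}}$ is isolated in $\Fc_S(X)$ whenever $x$ is isolated in $X$, and conversely, if $X$ has no isolated points, a perturbation in the spirit of your Case~1 works for every $u$).
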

\begin{proof}
	(a): It is not hard to check that $X$ is a singleton, say $X=\{x\}$, if and only if $\Kc(X)=\{\{x\}\}$, but also if and only if $\Fc(X)=\{\chi_{\{x\}}\}$.
	
	(b): The equivalence (i) $\Leftrightarrow$ (ii) is well-known (see \cite[Proposition~8.3]{IllanesNad1999_book_hyperspaces}). To prove (i) $\Rightarrow$ (v), assume that $x \in X$ is an isolated point for $(X,d)$ so that there is some $\delta>0$ for which $\{x\}+\delta=\{x\}$. Thus, given any fuzzy set $u \in \Bc_{S}(\chi_{\{x\}},\delta)$ we have that $d_H(\{x\},u_0) = d_{S}(\chi_{\{x\}},u) \leq \delta$, but then we have that $u_{0} \subset \{x\}+\delta = \{x\}$, so that $u_0=\{x\}$ and hence $u=\chi_{\{x\}}$. It follows that the fuzzy set $\chi_{\{x\}}$ is an isolated point for $\Fc_{S}(X)$, which proves (v). The implications (v) $\Rightarrow$ (iv) $\Rightarrow$ (iii) follow from the inclusions $\tau_{S} \subset \tau_{0} \subset \tau_{\infty}$. Finally, to prove (iii) $\Rightarrow$ (i), assume that $(X,d)$ has no isolated points and let us check that, given any $u \in \Fc(X)$ and any $\eps>0$, the set $\Bc_{\infty}(u,\eps)\setminus\{u\}$ is non-empty. Indeed, choose any $x \in u_1$ and consider the open $d$-ball $\Bc_d(x,\eps)$. We thus have two possibilities:
	\begin{enumerate}[--]
		\item \textbf{Case 1}: \textit{There exists some point $y \in \Bc_d(x,\eps) \setminus u_1$}. In this case we can consider the function
		\[
		v:X\longrightarrow\II \quad \text{ with } \quad v := \max\{ u , \chi_{\{y\}} \}.
		\]
		It is not hard to check that $v \in \Fc(X)$, that $u \neq v$, and that $v \in \Bc_{\infty}(u,\eps)\setminus\{u\}$.
		
		\item \textbf{Case 2}: \textit{We have the inclusion $\Bc_d(x,\eps) \subset u_1$}. In this case pick any positive value $0<\delta<\eps$ fulfilling that $X\setminus\Bc_d(x,\delta) \neq \varnothing$, which can be done because we are assuming that $(X,d)$ has no isolated points so that $X$ is not a singleton, and consider the function
		\[
		w:X\longrightarrow\II \quad \text{ with } \quad w := u \cdot \chi_{X\setminus \Bc_d(x,\delta)}.
		\]
		It is not hard to check that $w \in \Fc(X)$, that $u \neq w$, and that $w \in \Bc_{\infty}(u,\eps)\setminus\{u\}$.
	\end{enumerate}
	
	(c): Assume that the set $X$ is not a singleton and, given any fuzzy set $u \in \Fc(X)$ and any $\eps>0$, let us prove that $\Bc_{E}(u,\eps)\setminus\{u\}$ is non-empty. We have two possibilities:
	\begin{enumerate}[--]
		\item \textbf{Case 1}: \textit{We have that $u_1 \neq X$}. In this case pick any point $x \in X\setminus u_1$, pick any positive value $0<\delta<\eps$ and let $\alpha := \min\{ u(x)+\delta , 1 \}$. Thus, we can consider the function
		\[
		v:X\longrightarrow\II \quad \text{ with } \quad v := \max\{ u , \alpha\cdot\chi_{\{x\}} \}.
		\]
		It is not hard to check that $v \in \Fc(X)$, that $u \neq v$, and that $v \in \Bc_{E}(u,\eps)\setminus\{u\}$.
		
		\item \textbf{Case 2}: \textit{We have that $u_1 = X$}. In this case pick any point $x \in X = u_1$ and pick any positive value $0<\delta<\eps$ fulfilling that $X\setminus\Bc_d(x,\delta) \neq \varnothing$, which can be done because we are assuming that $X$ is not a singleton. Thus, we can consider the function
		\[
		w:X\longrightarrow\II \quad \text{ with } \quad w := u \cdot \chi_{X\setminus \Bc_d(x,\delta)}.
		\]
		It is not hard to check that $w \in \Fc(X)$, that $u \neq w$, and that $w \in \Bc_{\infty}(u,\eps)\setminus\{u\} \subset \Bc_{E}(u,\eps)\setminus\{u\}$.\qedhere
	\end{enumerate}
\end{proof}

\begin{remark}\label{Rem:path.connected}
	A result stronger than statement (c) of Lemma~\ref{Lem:isolated.points} holds: {\em for any metric space~$(X,d)$, the respective metric space $\Fc_{E}(X)$ is path-connected}. We do not use this fact along the paper, but let us briefly justify it. Actually, to avoid trivialities, let us assume that $X$ and hence the set $\Fc(X)$ have more than one point. Thus, given two distinct fuzzy sets $u,v \in \Fc(X)$ we can find a continuous path from $u$ to $v$ by concatenating four paths: pick $x \in u_1$ and $y \in v_1$, assume that $x \neq y$ since otherwise we can avoid paths $\gamma_2$ and $\gamma_3$, and let $\gamma_1,\gamma_2,\gamma_3,\gamma_4:[0,1]\longrightarrow\Fc(X)$ be defined for each $t \in [0,1]$ as $\gamma_1(t) := \max\{ \chi_{\{x\}} , (1-t)\cdot u \}$, $\gamma_2(t) := \chi_{\{x\}} + t \cdot \chi_{\{y\}}$, $\gamma_3(t) := \chi_{\{y\}} + (1-t) \cdot \chi_{\{x\}}$ and $\gamma_4(t) := \max\{ \chi_{\{y\}} , t \cdot v \}$. It can be checked that $\gamma_1,\gamma_2,\gamma_3$ and $\gamma_4$ are continuous precisely when $\Fc(X)$ is endowed with the endograph metric $d_{E}$. Moreover, we have that $\gamma_1(0)=u$, $\gamma_1(1)=\chi_{\{x\}}=\gamma_2(0)$, $\gamma_2(1)=\chi_{\{x,y\}}=\gamma_3(0)$, $\gamma_3(1)=\chi_{\{y\}}=\gamma_4(0)$ and $\gamma_4(1)=v$. The stated result follows.
\end{remark}

\section{Contractions and expansive properties}\label{Sec_3:contraexpansive}

In this section we focus on the contractive and expansive-type properties considered in \cite{JardonSan2021_FSS_expansive,JardonSanSan2020_FSS_some}. Our results solve \cite[Problems~3.6,~3.7~and~4.3]{JardonSan2021_FSS_expansive} and \cite[Problems~5.8~and~5.12]{JardonSanSan2020_FSS_some}. We have divided the section into three parts: first we contextualize and state our main result (see Theorem~\ref{The:contraexpansive}); then we prove it using Lemma~\ref{Lem:contraexpansive}; and we finally solve \cite[Problem~3.7]{JardonSan2021_FSS_expansive} in the negative with Example~\ref{Exa:expanding}.

\subsection{Definitions and main result}

Following \cite{JardonSan2021_FSS_expansive,JardonSanSan2020_FSS_some}, given a continuous map $f:X\longrightarrow X$ acting on a metric space $(X,d)$, we will say that the corresponding dynamical system $(X,f)$ is:
\begin{enumerate}[--]
	\item {\em contractive} if there exists $\lambda \in [0,1[$ such that $d(f(x),f(y)) \leq \lambda d(x,y)$ for every $x,y \in X$;
	
	\item {\em expansive} if there exists $\lambda>1$ such that $d(f(x),f(y)) \geq \lambda d(x,y)$ for every $x,y \in X$.
\end{enumerate}
These properties come from the theory of Dynamical Systems and they have played a very important role in such a field. Actually, very well-known results such as the Banach fixed-point theorem, or the so-called Walter topologically-stable theorem, justify the importance of the contractive and expansive dynamical systems. Moreover, several variations of the expansive property have been considered in the literature and, following \cite{JardonSan2021_FSS_expansive,RichersonWise2007_TA_positively}, we will say that a dynamical system $(X,f)$ is:
\begin{enumerate}[--]
	\item {\em expanding} if there exist a positive value $\eps>0$ and $\lambda>1$ such that $d(f(x),f(y)) > \lambda d(x,y)$ for every pair of points $x,y \in X$ fulfilling that $0<d(x,y)<\eps$;
	
	\item {\em positively expansive} if there exists a constant $\delta>0$ such that for any pair of distinct points $x \neq y$ in $X$ there is a non-negative integer $n \in \NN_0$ fulfilling that $d(f^n(x),f^n(y)) > \delta$.
\end{enumerate}
It is known that every {\em expansive} system is {\em expanding}, and the notion of {\em positive expansiveness} has been considered on dynamical systems over compact manifolds with boundary. For instance, it was proved in \cite[Theorem~1]{Hirade1990_PAMS_nonexistence} that no compact connected manifold with boundary admits a positively expansive map, and this kind of negative result is similar to the ones we obtain here. Let us recall that, for the systems $(\Kc(X),\com{f})$ and $(\Fc(X),\fuz{f})$, the next facts have been proved in \cite{Barnsley1988_book_fractals,JardonSan2021_FSS_expansive,JardonSanSan2020_FSS_some,MaZhuLu2016_SPlus_some,RomanChal2005_CSF_robinson-chaos}:
\begin{enumerate}[--]
	\item {\em a dynamical system $(X,f)$ is contractive if and only if so is $(\Kc(X),\com{f})$, but also if and only if so is the fuzzy extension $(\Fc_{\infty}(X),\fuz{f})$}, see \cite[Section~III.7]{Barnsley1988_book_fractals} and \cite[Proposition~5.7]{JardonSanSan2020_FSS_some};
	
	\item {\em a dynamical system $(X,f)$ is expansive if and only if so is $(\Kc(X),\com{f})$, but also if and only if so is the fuzzy extension $(\Fc_{\infty}(X),\fuz{f})$}, see \cite[Proposition~3]{RomanChal2005_CSF_robinson-chaos} and \cite[Theorem~5]{MaZhuLu2016_SPlus_some};
	
	\item {\em the system $(\Kc(X),\com{f})$ is expanding (resp.\ positively expansive) if and only if so is $(\Fc_{\infty}(X),\fuz{f})$, and in that case $(X,f)$ is expanding (resp.\ positively expansive)}, see \cite[Theorems~3.1~and~4.1]{JardonSan2021_FSS_expansive};
	
	\item {\em there is a expanding (resp.\ positively expansive) dynamical system $(X,f)$ for which $(\Kc(X),\com{f})$ and hence $(\Fc_{\infty}(X),\fuz{f})$ are not expanding (resp.\ positively expansive)}, see \cite[Examples~3.2~and~4.2]{JardonSan2021_FSS_expansive}.
\end{enumerate}
In this context, it was asked in \cite{JardonSan2021_FSS_expansive,JardonSanSan2020_FSS_some} about the possible contractive, expansive, expanding and positively expansive behaviour of $(\Fc_{0}(X),\fuz{f})$, $(\Fc_{S}(X),\fuz{f})$ and $(\Fc_{E}(X),\fuz{f})$, whenever $(\Kc(X),\com{f})$ presents any of these properties (see \cite[Problems~3.6~and~4.3]{JardonSan2021_FSS_expansive} and \cite[Problems~5.8~and~5.12]{JardonSanSan2020_FSS_some}). The main result of this section shows that it occurs only when the original system $(X,f)$ is trivial:

\begin{theorem}\label{The:contraexpansive}
	Let $f:X\longrightarrow X$ be a continuous map on a metric space $(X,d)$. Hence:
	\begin{enumerate}[{\em(a)}]
		\item The extended dynamical systems $(\Fc_{0}(X),\fuz{f})$, $(\Fc_{S}(X),\fuz{f})$ and $(\Fc_{E}(X),\fuz{f})$ are contractive if and only if the map $f$ is constant.
		
		\item The extended dynamical systems $(\Fc_{0}(X),\fuz{f})$, $(\Fc_{S}(X),\fuz{f})$ and $(\Fc_{E}(X),\fuz{f})$ are expansive, expanding and positively expansive if and only if the set $X$ is a singleton.
	\end{enumerate}
\end{theorem}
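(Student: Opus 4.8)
The plan is to reduce both statements to a single family of test fuzzy sets together with a direct computation of the three metrics on it. The two ``if'' directions are essentially free. If $f$ is constant, say $f \equiv c$, then Proposition~\ref{Pro:fuz{f}}(a) gives $\left[\fuz{f}(u)\right]_{\alpha} = f(u_{\alpha}) = \{c\}$ for every $u \in \Fc(X)$ and every $\alpha \in \II$, so $\fuz{f}$ is the constant map $u \mapsto \chi_{\{c\}}$; hence $\rho(\fuz{f}(u),\fuz{f}(v)) = 0$ for all $u,v$ and the extension is contractive (with $\lambda = 0$) in each of the three metrics. Likewise, if $X$ is a singleton then so is $\Fc(X)$ by Lemma~\ref{Lem:isolated.points}(a), and every self-map of a one-point space is trivially expansive, expanding and positively expansive. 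So only the reverse implications require work.

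For those, I would fix two distinct points $a,b \in X$ and, for each $\beta \in \ ]0,1]$, consider the piecewise-constant normal fuzzy set $u^{\beta} := \max\{ \chi_{\{a\}} , \beta \cdot \chi_{\{b\}} \}$, whose levels are $u^{\beta}_{\alpha} = \{a,b\}$ for $\alpha \in \ ]0,\beta]$ and $u^{\beta}_{\alpha} = \{a\}$ for $\alpha \in \ ]\beta,1]$. The key computation, which I would isolate as Lemma~\ref{Lem:contraexpansive}, is that whenever $0 < \beta_1 < \beta_2 \leq 1$ with $\delta := \beta_2 - \beta_1$ small (namely $\delta < d(a,b)$), one has $d_{E}(u^{\beta_1},u^{\beta_2}) = d_{S}(u^{\beta_1},u^{\beta_2}) = d_{0}(u^{\beta_1},u^{\beta_2}) = \delta$. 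Indeed, the sendographs and endographs of $u^{\beta_1}$ and $u^{\beta_2}$ differ only in the vertical segment $\{b\}\times \ ]\beta_1,\beta_2]$, each of whose points lies within vertical distance $\delta$ of the point $(b,\beta_1)$ present in both; this yields the upper bounds $d_{E},d_{S} \leq \delta$, while $(b,\beta_2)$ is at $\com{d}$-distance exactly $\delta$ (here $\delta < d(a,b)$ prevents a cheaper horizontal matching), giving the matching lower bound. For $d_{0}$ the upper bound follows by choosing $\xi \in \Tc$ with $\xi(\beta_2)=\beta_1$, for which $\xi\circ u^{\beta_2} = u^{\beta_1}$ and $\sup_{\alpha}|\xi(\alpha)-\alpha| = \delta$, and the lower bound is free from $d_{0}\geq d_{S}$ in Proposition~\ref{Pro:fuzzy.metrics}(a).

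The punchline is that $\fuz{f}$ maps this family to itself: by Proposition~\ref{Pro:fuz{f}}(a)--(b) the levels of $(\fuz{f})^n(u^{\beta})$ are $f^n(\{a,b\})$ on $\ ]0,\beta]$ and $f^n(\{a\})$ on $\ ]\beta,1]$, so $(\fuz{f})^n(u^{\beta}) = \max\{ \chi_{\{f^n(a)\}} , \beta\cdot\chi_{\{f^n(b)\}} \}$. Applying the same computation with $f^n(a),f^n(b)$ in place of $a,b$ gives $\rho((\fuz{f})^n(u^{\beta_1}),(\fuz{f})^n(u^{\beta_2})) \leq \delta$ in every case: it is $0$ when $f^n(a)=f^n(b)$ and exactly $\delta$ when $f^n(a)\neq f^n(b)$ and $\delta<d(f^n(a),f^n(b))$. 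From here each conclusion is immediate. For statement (a), if $f$ is not constant I choose $a,b$ with $f(a)\neq f(b)$; then for small $\delta$ both the source and the image ($n=1$) distances equal $\delta$, the ratio is $1$, and no contraction constant $\lambda<1$ can exist. For statement (b) the same source/image pair has image distance $\leq \delta = $ source distance, which already defeats expansiveness ($\lambda>1$) and the expanding inequality; and since the bound $\leq\delta$ holds for \emph{all} $n$, the two distinct sets $u^{\beta_1}\neq u^{\beta_2}$ stay within $\delta$ of each other forever, so taking $\delta$ below any prescribed expansiveness constant defeats positive expansiveness as well.

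I expect the main obstacle to be the metric computations packaged in Lemma~\ref{Lem:contraexpansive}, and especially the Skorokhod distance $d_{0}$: unlike $d_{E}$ and $d_{S}$ it is an infimum over reparametrizations, so establishing $d_{0}(u^{\beta_1},u^{\beta_2})=\delta$ requires both the explicit $\xi$ above and the inequality $d_{0}\geq d_{S}$ for the lower bound. A secondary point demanding care is the separation of the vertical and horizontal contributions to $\com{d}_H$: one must keep $\delta$ strictly below $d(a,b)$ (and, in the image, below $d(f(a),f(b))$ when those points are distinct) so that the cheap vertical matching rather than an accidental horizontal one determines the value. It is precisely this decoupling of ``height'' from ``horizontal distance'' that makes $d_{E},d_{S},d_{0}$ behave radically differently from $d_{\infty}$, for which the very same family yields only the ratio $d(f(a),f(b))/d(a,b)$ and hence merely recovers contractivity of $f$.
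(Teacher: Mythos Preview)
Your proposal is correct and follows essentially the same approach as the paper: the paper's Lemma~\ref{Lem:contraexpansive} uses the specific pair $u=\chi_{\{x\}}+\tfrac{1}{2}\chi_{\{y\}}$ and $u^k=\chi_{\{x\}}+(\tfrac{1}{2}-\tfrac{1}{k})\chi_{\{y\}}$, which is exactly your family $u^{\beta}$ at heights $\beta_2=\tfrac{1}{2}$ and $\beta_1=\tfrac{1}{2}-\tfrac{1}{k}$, and the metric computations (piecewise-linear $\xi$ for the $d_0$ upper bound, direct $\com{d}_H$ estimate at $(b,\beta_2)$ for the $d_E$ lower bound) and the ensuing contradictions are the same.
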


\begin{remark}\label{Rem:solved}
	The reader should note that:
	\begin{enumerate}[(1)]
		\item Statement (a) of Theorem~\ref{The:contraexpansive} can be used to reprove \cite[Example~7]{WuZhangChen2020_FSS_answers} in a shorter way, it generalizes \cite[Theorem~8]{WuZhangChen2020_FSS_answers}, and it completely solves both \cite[Problems~5.8~and~5.12]{JardonSanSan2020_FSS_some} by characterizing the systems $(X,f)$ whose extensions $(\Fc_{0}(X),\fuz{f})$, $(\Fc_{S}(X),\fuz{f})$ and $(\Fc_{E}(X),\fuz{f})$ are contractive.
		
		\item Statement (b) of Theorem~\ref{The:contraexpansive} solves both \cite[Problems~3.6~and~4.3]{JardonSan2021_FSS_expansive}, it can be used to reprove in a shorter way \cite[Examples~3.5,~3.9~and~4.6]{JardonSan2021_FSS_expansive}, and it shows that the implications ``(iv) $\Rightarrow$ (iii)'' proved in \cite[Theorems~3.1~and~4.1]{JardonSan2021_FSS_expansive}, but also the results \cite[Propositions~3.8,~4.4~and~4.5]{JardonSan2021_FSS_expansive}, are practically void (such results only apply when $X$ is a singleton, which is rarely assumed).
	\end{enumerate} 
\end{remark}

\subsection{Proof of Theorem~\ref{The:contraexpansive}}

To prove Theorem~\ref{The:contraexpansive} we will need an auxiliary lemma. From now on, given a metric space $(X,d)$ and a subset $Y \subset X$ we will denote by $\diam_d(Y) := \sup\{ d(x,y) \ ; \ x,y \in Y \}$ the {\em $d$-diameter} of $Y$. Note that the set $X$ is a singleton if and only if $\diam_d(X)=0$, and that given any map $f:X\longrightarrow X$ we have that $f$ is constant if and only if $\diam_d(f(X))=0$. With this notation we have the following:

\begin{lemma}\label{Lem:contraexpansive}
	Let $f:X\longrightarrow X$ be a continuous map on a metric space $(X,d)$. Hence:
	\begin{enumerate}[{\em(a)}]
		\item If we have that $\diam_d(f(X))>0$, then for each integer $k > \max\{ 2 , \diam_d(f(X))^{-1} \}$ there exist two normal fuzzy sets $u,u^k \in \Fc(X)$ fulfilling that
		\[
		d_{E}(u,u^k) \leq d_{S}(u,u^k) \leq d_{0}(u,u^k) \leq \tfrac{1}{k}
		\]
		and
		\[
		d_{E}(\fuz{f}(u),\fuz{f}(u^k)) = d_{S}(\fuz{f}(u),\fuz{f}(u^k)) = d_{0}(\fuz{f}(u),\fuz{f}(u^k)) = \tfrac{1}{k}.
		\]
		
		\item If we have that $\diam_d(X)>0$, then for each integer $k > \max\{ 2 , \diam_d(X)^{-1} \}$ there exist two normal fuzzy sets $u,u^k \in \Fc(X)$ fulfilling that
		\[
		d_{E}(u,u^k) = d_{S}(u,u^k) = d_{0}(u,u^k) = \tfrac{1}{k}
		\]
		and
		\[
		d_{E}(\fuz{f}^n(u),\fuz{f}^n(u^k)) \leq d_{S}(\fuz{f}^n(u),\fuz{f}^n(u^k)) \leq d_{0}(\fuz{f}^n(u),\fuz{f}^n(u^k)) \leq \tfrac{1}{k} \quad \text{ for all } n \in \NN.
		\]
	\end{enumerate}
\end{lemma}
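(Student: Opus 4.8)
The plan is to handle both statements with a single device, namely \emph{level reparametrizations} $\xi \in \Tc$. The crucial observation, which I would isolate first, is that the fuzzification commutes with such reparametrizations: writing $\xi\circ u$ for the fuzzy set $x\mapsto\xi(u(x))$, one has
\[
\fuz{f}^n(\xi\circ u) = \xi\circ\fuz{f}^n(u) \qquad \text{ for every } u \in \Fc(X) \text{ and } n \in \NN_0.
\]
This is immediate from Proposition~\ref{Pro:fuz{f}}: since $(\xi\circ u)_\alpha = u_{\xi^{-1}(\alpha)}$ and $[\fuz{f}^n(w)]_\alpha = f^n(w_\alpha)$, both sides have $\alpha$-level $f^n(u_{\xi^{-1}(\alpha)})$. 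One also checks that $\xi\circ u \in \Fc(X)$, since it shares the $0$- and $1$-levels of $u$ and remains upper-semicontinuous ($\xi$ being an increasing homeomorphism). The point is that a perturbation of $u$ obtained by composing with $\xi$ is \emph{rigidly carried along} by every iterate $\fuz{f}^n$, which is precisely what a non-contraction (part (a)) and a non-expansion (part (b)) demand.

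Concretely, I would fix $\beta$ with $0<\beta<\beta+\tfrac1k<1$ (possible as $k>2$), choose $\xi\in\Tc$ with $\xi(\beta)=\beta+\tfrac1k$ and $\sup_{\alpha}|\xi(\alpha)-\alpha| = \sup_{\alpha}|\xi^{-1}(\alpha)-\alpha| = \tfrac1k$ (a one-breakpoint piecewise-linear $\xi$ does this), and use the two-level sets
\[
u := \max\left\{ \beta\cdot\chi_{\{a,b\}} , \chi_{\{a\}} \right\} \qquad\text{and}\qquad u^k := \xi\circ u = \max\left\{ (\beta+\tfrac1k)\cdot\chi_{\{a,b\}} , \chi_{\{a\}} \right\},
\]
for a suitable pair $a\neq b$. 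For part (a) I would pick $a,b$ with $d(f(a),f(b))>\tfrac1k$, available because $\diam_d(f(X))>\tfrac1k$; then Proposition~\ref{Pro:fuz{f}}(a) makes $\fuz{f}(u)$ the two-level set with $f(a)$ at level $1$ and $f(b)$ at level $\beta$, and $\fuz{f}(u^k)$ the same with $f(b)$ raised to $\beta+\tfrac1k$. For part (b) I would instead pick $a,b$ with $d(a,b)>\tfrac1k$ (available since $\diam_d(X)>\tfrac1k$), keeping the same $u,u^k$.

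The proof then reduces to two computations, applied on opposite sides in the two parts. The \textbf{bounded side}: whenever $v^k=\xi\circ v$, taking $\eta:=\xi^{-1}$ gives $\eta\circ v^k=v$, so from the definition of the Skorokhod metric $d_0(v,v^k)\le\sup_\alpha|\xi^{-1}(\alpha)-\alpha|=\tfrac1k$, whence $d_{E}(v,v^k)\le d_{S}(v,v^k)\le d_0(v,v^k)\le\tfrac1k$ by Proposition~\ref{Pro:fuzzy.metrics}(a), \emph{regardless of the base geometry}. This yields $d_{E}(u,u^k)\le d_{S}(u,u^k)\le d_0(u,u^k)\le\tfrac1k$ in part (a), and the bound for all iterates in part (b), since by the commutation relation the $n$-th images again differ by the single $\xi$. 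The \textbf{exact side}: for such a two-level pair whose base sets $\{a\}$ and $\{a,b\}$ are $d_H$-separated by more than $\tfrac1k$, I expect all three distances to equal $\tfrac1k$. The endograph part is direct: $\eend(u)\subset\eend(u^k)$ differ only in the slab $\{b\}\times\,]\beta,\beta+\tfrac1k]$, and each new point $(b,\gamma)$ sits at $\com{d}$-distance $\min\{\gamma-\beta,\,d(a,b)\}\le\tfrac1k$ from $\eend(u)$, giving $d_{E}=d_{S}=\tfrac1k$ (the sendographs agree since $u_0=u^k_0=\{a,b\}$). Applied to the images this gives the equalities in part (a); applied to $(u,u^k)$ themselves it gives those in part (b).

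The step I expect to be the main obstacle is the matching \emph{lower} bound $d_0\ge\tfrac1k$ on the exact side, i.e.\ excluding a cleverer $\eta$ that makes $d_0$ smaller. The key is exactly the separation $d(a,b)>\tfrac1k$: if $\eps<d(a,b)$, then $d_\infty(w,\eta\circ w^k)\le\eps$ forces $w_\alpha=w^k_{\eta^{-1}(\alpha)}$ for every $\alpha$ (the only two candidate level sets, $\{a\}$ and $\{a,b\}$, are more than $\eps$ apart in $d_H$ and so cannot be matched to each other), and tracking the jump at $\alpha=\beta$ forces $\eta(\beta+\tfrac1k)=\beta$, hence $\sup_\alpha|\eta(\alpha)-\alpha|\ge\tfrac1k$ and $\eps\ge\tfrac1k$. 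This forcing argument, together with the bookkeeping confirming $u,u^k\in\Fc(X)$ and that $\xi$ realizes deviation exactly $\tfrac1k$ in both directions, is where the only genuine care is needed; everything else is a routine application of Propositions~\ref{Pro:Hausdorff}, \ref{Pro:fuz{f}} and~\ref{Pro:fuzzy.metrics}.
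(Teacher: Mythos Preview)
Your proposal is correct and follows essentially the same route as the paper: the same two-level fuzzy sets $u=\max\{\beta\chi_{\{a,b\}},\chi_{\{a\}}\}$ and $u^k=\xi\circ u$, the same piecewise-linear reparametrization to bound $d_0$ from above, and the same direct endograph calculation for the lower bound on $d_E$. Your explicit commutation relation $\fuz{f}^n(\xi\circ u)=\xi\circ\fuz{f}^n(u)$ is a clean way to package what the paper verifies by hand at each step, and your separate forcing argument for $d_0\ge\tfrac1k$ is correct but redundant, since $d_0\ge d_E=\tfrac1k$ already follows from the endograph computation.
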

In the statement of Lemma~\ref{Lem:contraexpansive} we could have that $\diam_d(f(X))=\infty$ or that $\diam_d(X)=\infty$, respectively. In each of this cases, the value $\diam_d(f(X))^{-1}$ or $\diam_d(X)^{-1}$ would be taken equal to~$0$, so that the only limitation of the respective statement would be considering $k>2$.
\begin{proof}[Proof of Lemma~\ref{Lem:contraexpansive}]
	In both parts (a) and (b) of the statement we are going to fix some integer $k > 2$ and two distinct points $x,y \in X$, and then we will consider the normal fuzzy sets
	\begin{equation}\label{eq:u.u^k}
		u := \chi_{\{x\}}+\tfrac{1}{2}\chi_{\{y\}} \quad \text{ and } \quad u^{k} := \chi_{\{x\}}+\left(\tfrac{1}{2}-\tfrac{1}{k}\right)\chi_{\{y\}}.
	\end{equation}
	Moreover, for each integer $k>2$ we will consider the 2-piecewise-linear map $\xi_k:\II\longrightarrow\II$ defined as
	\begin{equation}\label{eq:xi_k}
		\xi_k(\alpha) :=
		\begin{cases}
			\tfrac{k}{k-2}\alpha & \text{ if } 0\leq \alpha\leq \tfrac{1}{2}-\tfrac{1}{k},\\[5pt]
			\tfrac{k}{k+2}\alpha + \tfrac{2}{k+2} & \text{ if } \tfrac{1}{2}-\tfrac{1}{k}< \alpha\leq 1,
		\end{cases}
	\end{equation}
	which fulfills that $\xi_k(0)=0$, that $\xi_k(\tfrac{1}{2}-\tfrac{1}{k})=\tfrac{1}{2}$ and that $\xi_k(1)=1$. It is not hard to check that
	\begin{equation}\label{eq:xi_k=1/k}
		\sup_{\alpha\in\II} |\xi_k(\alpha)-\alpha| = \left|\xi_k\left(\tfrac{1}{2}-\tfrac{1}{k}\right) - \left(\tfrac{1}{2}-\tfrac{1}{k}\right)\right| = \left|\tfrac{1}{2} - \left(\tfrac{1}{2}-\tfrac{1}{k}\right)\right| = \tfrac{1}{k},
	\end{equation}
	and that $\xi_k^{-1}(\tfrac{1}{2}) = \tfrac{1}{2}-\tfrac{1}{k}$. We are now ready to proceed with the proof.
	
	(a): Assume that $\diam_d(f(X))>0$. Then, fixed any integer $k > \max\{ 2 , \diam_d(f(X))^{-1} \}$ we can find points $x,y \in X$ such that $\tfrac{1}{k} \leq d(f(x),f(y)) \leq \diam_d(f(X))$. It follows that $f(x) \neq f(y)$, so that $x$ and $y$ are necessarily distinct and we can consider the sets $u,u^k \in \Fc(X)$ as defined in \eqref{eq:u.u^k}. Now, since $d_{E}(v,w) \leq d_{S}(v,w) \leq d_{0}(v,w)$ holds for every $v,w \in \Fc(X)$ as stated in Proposition~\ref{Pro:fuzzy.metrics}, to show that $u$ and $u^k$ fulfill the required conditions we only need to prove the next two inequalities:
	\begin{enumerate}[--]
		\item \textbf{Inequality 1}: \textit{We have that $d_{0}(\fuz{f}^n(u),\fuz{f}^n(u^k)) \leq \tfrac{1}{k}$ for $n \in \{0,1\}$}. Fix $n \in \{0,1\}$ and note that
		\[
		\fuz{f}^n(u) = \chi_{\{f^n(x)\}}+\tfrac{1}{2}\chi_{\{f^n(y)\}} = 
		\begin{cases}
			\chi_{\{x\}}+\tfrac{1}{2}\chi_{\{y\}} & \text{ if } n=0,\\[5pt]
			\chi_{\{f(x)\}}+\tfrac{1}{2}\chi_{\{f(y)\}} & \text{ if } n=1,
		\end{cases}
		\]
		but also that
		\[
		\fuz{f}^n(u^k) = \chi_{\{f^n(x)\}}+\left( \tfrac{1}{2}-\tfrac{1}{k} \right)\chi_{\{f^n(y)\}} = 
		\begin{cases}
			\chi_{\{x\}}+\left( \tfrac{1}{2}-\tfrac{1}{k} \right)\chi_{\{y\}} & \text{ if } n=0,\\[5pt]
			\chi_{\{f(x)\}}+\left( \tfrac{1}{2}-\tfrac{1}{k} \right)\chi_{\{f(y)\}} & \text{ if } n=1.
		\end{cases}
		\]
		Considering the strictly increasing homeomorphism $\xi_k:\II\longrightarrow\II$ as defined in \eqref{eq:xi_k} we have that
		\[
		\left[ \xi_k \circ \left( \fuz{f}^n(u^k) \right) \right]_{\alpha} = \left[ \fuz{f}^n(u^k) \right]_{\xi_k^{-1}(\alpha)} =
		\begin{cases}
			\{f^n(x),f^n(y)\} & \text{ if } 0\leq \alpha\leq \tfrac{1}{2},\\[5pt]
			\{f^n(x)\} & \text{ if } \tfrac{1}{2}<\alpha\leq 1.
		\end{cases}
		\]
		This implies that $\fuz{f}^n(u)=\xi_k \circ \fuz{f}^n(u^k)$ and hence $d_{\infty}(\fuz{f}^n(u),\xi_k \circ \fuz{f}^n(u^k))=0$. We finally deduce that
		\[
		d_{0}(\fuz{f}^n(u),\fuz{f}^n(u^k)) \leq \max\left\{ \sup_{\alpha\in\II} |\xi_k(\alpha)-\alpha| \ , \ d_{\infty}(\fuz{f}^n(u),\xi_k \circ \fuz{f}^n(u^k)) \right\} \overset{\eqref{eq:xi_k=1/k}}{=} \tfrac{1}{k}.
		\]
		
		\item \textbf{Inequality 2}: \textit{We have that $d_{E}(\fuz{f}(u),\fuz{f}(u^k)) \geq \tfrac{1}{k}$}. We use statement~(a) of Proposition~\ref{Pro:Hausdorff}: fixed any $0\leq\eps<\tfrac{1}{k}$ we will check that $\eend(\fuz{f}(u)) \not\subset \eend(\fuz{f}(u^k))+\eps$. In fact, $(f(y),\tfrac{1}{2}) \in \eend(\fuz{f}(u))$,
		\[
		\inf_{(z,\alpha) \in \eend(\fuz{f}(u^k))} \com{d}( (f(y),\tfrac{1}{2}) , (z,\alpha) ) = \inf_{(z,\alpha) \in \eend(\fuz{f}(u^k))} \max\{ d(f(y),z) , |\tfrac{1}{2}-\alpha| \},
		\]
		and for each $(z,\alpha) \in \eend(\fuz{f}(u^k))$ we have that
		\begin{align*}
			\max\{ d(f(y),z) , |\tfrac{1}{2}-\alpha| \} &= \left\{
			\begin{array}{ll}
				\max\{ d(f(y),z) , \tfrac{1}{2} \} & \text{ if } \alpha=0 \text{ and } z \in X, \hspace{1cm} \\[5pt]
				\max\{ d(f(y),f(y)) , |\tfrac{1}{2}-\alpha| \} & \text{ if } 0<\alpha\leq\tfrac{1}{2}-\tfrac{1}{k} \text{ and } z=f(y), \\[5pt]
				\max\{ d(f(y),f(x)) , |\tfrac{1}{2}-\alpha| \} & \text{ if } 0<\alpha\leq 1 \text{ and } z=f(x),
			\end{array}
			\right\} \\[7.5pt]
			&\geq \left\{
			\begin{array}{ll}
				\tfrac{1}{2} & \text{ if } \alpha=0 \text{ and } z \in X, \\[5pt]
				\tfrac{1}{k} & \text{ if } 0<\alpha\leq\tfrac{1}{2}-\tfrac{1}{k} \text{ and } z=f(y), \\[5pt]
				d(f(x),f(y)) & \text{ if } 0<\alpha\leq 1 \text{ and } z=f(x),
			\end{array}
			\right\} \geq \tfrac{1}{k} > \eps.
		\end{align*}
		We get that $d_{E}(\fuz{f}(u),\fuz{f}(u^k))>\eps$ and the arbitrariness of $0\leq\eps<\tfrac{1}{k}$ shows that $d_{E}(\fuz{f}(u),\fuz{f}(u^k))\geq\tfrac{1}{k}$.
	\end{enumerate}

	(b): Assume now that $\diam_d(X)>0$. Then, fixed any integer $k > \max\{ 2 , \diam_d(X)^{-1} \}$ we can find points $x,y \in X$ such that $\tfrac{1}{k} \leq d(x,y) \leq \diam_d(X)$. It follows that $x$ and $y$ are distinct and we can consider the sets $u,u^k \in \Fc(X)$ as defined in \eqref{eq:u.u^k}. Using again that $d_{E}(v,w) \leq d_{S}(v,w) \leq d_{0}(v,w)$ holds for every $v,w \in \Fc(X)$, we only need to prove the next two inequalities:
	\begin{enumerate}[--]
		\item \textbf{Inequality 1}: \textit{We have that $d_{E}(u,u^k) \geq \tfrac{1}{k}$}. As above, to check this inequality we use statement~(a) of Proposition~\ref{Pro:Hausdorff}: fixed any $0\leq\eps<\tfrac{1}{k}$ we will check that $\eend(u) \not\subset \eend(u^k)+\eps$. In fact, similarly to the previous case, the point $(y,\tfrac{1}{2}) \in \eend(u)$ fulfills that
		\[
		\inf_{(z,\alpha) \in \eend(u^k)} \com{d}( (y,\tfrac{1}{2}) , (z,\alpha) ) = \inf_{(z,\alpha) \in \eend(u^k)} \max\{ d(y,z) , |\tfrac{1}{2}-\alpha| \},
		\]
		and for each $(z,\alpha) \in \eend(u^k)$ we have that
		\begin{align*}
			\max\{ d(y,z) , |\tfrac{1}{2}-\alpha| \} &= \left\{
			\begin{array}{ll}
				\max\{ d(y,z) , \tfrac{1}{2} \} & \text{ if } \alpha=0 \text{ and } z \in X, \hspace{1cm} \\[5pt]
				\max\{ d(y,y) , |\tfrac{1}{2}-\alpha| \} & \text{ if } 0<\alpha\leq\tfrac{1}{2}-\tfrac{1}{k} \text{ and } z=y, \\[5pt]
				\max\{ d(y,x) , |\tfrac{1}{2}-\alpha| \} & \text{ if } 0<\alpha\leq 1 \text{ and } z=x,
			\end{array}
			\right\} \\[7.5pt]
			&\geq \left\{
			\begin{array}{ll}
				\tfrac{1}{2} & \text{ if } \alpha=0 \text{ and } z \in X, \\[5pt]
				\tfrac{1}{k} & \text{ if } 0<\alpha\leq\tfrac{1}{2}-\tfrac{1}{k} \text{ and } z=y, \\[5pt]
				d(x,y) & \text{ if } 0<\alpha\leq 1 \text{ and } z=x,
			\end{array}
			\right\} \geq \tfrac{1}{k} > \eps.
		\end{align*}
		We get that $d_E(u,u^k)>\eps$ and the arbitrariness of $0\leq\eps<\tfrac{1}{k}$ shows that $d_E(u,u^k)\geq\tfrac{1}{k}$.
		
		\item \textbf{Inequality 2}: \textit{We have that $d_{0}(\fuz{f}^n(u),\fuz{f}^n(u^k)) \leq \tfrac{1}{k}$ for all $n \in \NN_0$}. Fix any $n \in \NN_0$ and note that
		\[
		\fuz{f}^n(u) = 
		\begin{cases}
			\chi_{\{f^n(x)\}}+\tfrac{1}{2}\chi_{\{f^n(y)\}} & \text{ if } f^n(x)\neq f^n(y),\\[5pt]
			\chi_{\{f^n(x)\}} & \text{ if } f^n(x)=f^n(y),
		\end{cases}
		\]
		but also that
		\[
		\fuz{f}^n(u^k) =  
		\begin{cases}
			\chi_{\{f^n(x)\}}+\left( \tfrac{1}{2}-\tfrac{1}{k} \right)\chi_{\{f^n(y)\}} & \text{ if } f^n(x)\neq f^n(y),\\[5pt]
			\chi_{\{f^n(x)\}} & \text{ if } f^n(x)=f^n(y).
		\end{cases}
		\]
		If $f^n(x)=f^n(y)$ then we would have that $\fuz{f}^n(u)=\fuz{f}^n(u^k)$ and $d_{0}(\fuz{f}^n(u),\fuz{f}^n(u^k)) = 0 \leq \tfrac{1}{k}$, which would finish the proof in this case. Assume now that $f^n(x)\neq f^n(y)$. Considering then the strictly increasing homeomorphism $\xi_k:\II\longrightarrow\II$ as defined in \eqref{eq:xi_k} we have that
		\[
		\left[ \xi_k \circ \left( \fuz{f}^n(u^k) \right) \right]_{\alpha} = \left[ \fuz{f}^n(u^k) \right]_{\xi_k^{-1}(\alpha)} =
		\begin{cases}
			\{f^n(x),f^n(y)\} & \text{ if } 0\leq \alpha\leq \tfrac{1}{2},\\[5pt]
			\{f^n(x)\} & \text{ if } \tfrac{1}{2}<\alpha\leq 1.
		\end{cases}
		\]
		This implies that $\fuz{f}^n(u)=\xi_k \circ \fuz{f}(u^k)$ and hence $d_{\infty}(\fuz{f}^n(u),\xi_k \circ \fuz{f}^n(u^k))=0$. We finally deduce that
		\[
		d_{0}(\fuz{f}^n(u),\fuz{f}^n(u^k)) \leq \max\left\{ \sup_{\alpha\in\II} |\xi_k(\alpha)-\alpha| \ , \ d_{\infty}(\fuz{f}^n(u),\xi_k \circ \fuz{f}^n(u^k)) \right\} \overset{\eqref{eq:xi_k=1/k}}{=} \tfrac{1}{k}.\qedhere
		\]
	\end{enumerate}
\end{proof}

The proof of Lemma~\ref{Lem:contraexpansive} is now complete. Note that the choice of the fuzzy sets $u$ and $u^k$ considered in the previous proof admits several variations: for instance, adapting the requirement $k>2$, one could have taken $u=\chi_{\{x\}}+\alpha\chi_{\{y\}}$ for any $\alpha\in \ ]0,1[$ together with $u^k=\chi_{\{x\}}+(\alpha\pm\tfrac{1}{k})\chi_{\{y\}}$ instead than the sets $u$ and $u^k$ considered in \eqref{eq:u.u^k}. In any case, we are now ready to prove Theorem~\ref{The:contraexpansive}.

\begin{proof}[Proof of Theorem~\ref{The:contraexpansive}]
	(a): Assume first that the map $f:X\longrightarrow X$ is constant, so that there exists some point $x^* \in X$ fulfilling that $f(x)=x^*$ for all $x \in X$. In this case, it is not hard to check that
	\[
	\fuz{f}(u)=\chi_{\{x^*\}} \quad \text{ for all } u \in \Fc(X).
	\]
	The system $(\Fc(X),\fuz{f})$ is then contractive for $\lambda = 0$, regardless of the topology considered on $\Fc(X)$.
	
	Conversely, if the map $f:X\longrightarrow X$ is non-constant then we have that $\diam_d(f(X))>0$. Fix any metric $\rho \in \{d_{0},d_{S},d_{E}\}$ and, by contradiction, assume that the dynamical system $(\Fc(X),\fuz{f})$ is contractive when the space $\Fc(X)$ is endowed with the metric $\rho$. In that case there exists a real value $\lambda \in [0,1[$ such that $\rho(\fuz{f}(v),\fuz{f}(w)) \leq \lambda \rho(v,w)$ for every pair of sets $v,w \in \Fc(X)$. Applying statement~(a) of Lemma~\ref{Lem:contraexpansive} for any $k>\max\{ 2 , \diam_d(f(X))^{-1} \}$ we get $u,u^k \in \Fc(X)$ for which
	\[
	\tfrac{1}{k} = \rho(\fuz{f}(u),\fuz{f}(u^k)) \leq \lambda \rho(u,u^k) \leq \lambda \tfrac{1}{k} < \tfrac{1}{k},
	\]
	which is a contradiction.
	
	(b): Assume first that the set $X$ is a singleton. In this case, Lemma~\ref{Lem:isolated.points} implies that $\Fc(X)$ is also a singleton, so that $(\Fc(X),\fuz{f})$ reduces to a dynamical system consisting of a single fixed point.~It is straightforward to verify that such a system trivially satisfies the definitions of being expansive, expanding, and positively expansive, regardless of the topology considered on $\Fc(X)$.
	
	Conversely, if the set $X$ is not a singleton then we have that $\diam_d(X)>0$. As in part (a), let us fix any metric $\rho \in \{d_{0},d_{S},d_{E}\}$ and, by contradiction:
	\begin{enumerate}[--]
		\item Assume that the dynamical system $(\Fc(X),\fuz{f})$ is expansive when the space $\Fc(X)$ is endowed with the metric $\rho$. Then there exists a positive value $\lambda>1$ such that $\rho(\fuz{f}(v),\fuz{f}(w))\geq \lambda \rho(v,w)$ for every pair of points $v,w \in \Fc(X)$. Applying now statement (b) of Lemma~\ref{Lem:contraexpansive} for any $k>\max\{ 2 , \diam_d(X)^{-1} \}$ we get two fuzzy sets $u,u^k \in \Fc(X)$ fulfilling that
		\[
		\tfrac{1}{k} \geq \rho(\fuz{f}(u),\fuz{f}(u^k)) \geq \lambda \rho(u,u^k) = \lambda \tfrac{1}{k} > \tfrac{1}{k},
		\]
		which is a contradiction.
		
		\item Assume that the dynamical system $(\Fc(X),\fuz{f})$ is expanding when the space $\Fc(X)$ is endowed with the metric $\rho$. Then there exist $\eps>0$ and $\lambda>1$ such that $\rho(\fuz{f}(v),\fuz{f}(w)) > \lambda \rho(v,w)$ for every pair of normal fuzzy sets $v,w \in \Fc(X)$ fulfilling that $0<\rho(v,w)<\eps$. Applying now statement~(b) of Lemma~\ref{Lem:contraexpansive} for any $k>\max\{ 2 , \diam_d(X)^{-1} , \tfrac{1}{\eps} \}$ we get two normal fuzzy sets $u,u^k \in \Fc(X)$ fulfilling that $0<\rho(u,u^k)=\tfrac{1}{k}<\eps$ and also that
		\[
		\tfrac{1}{k} \geq \rho(\fuz{f}(u),\fuz{f}(u^k)) > \lambda \rho(u,u^k) = \lambda \tfrac{1}{k} > \tfrac{1}{k},
		\]
		which is a contradiction.
		
		\item Assume that the system $(\Fc(X),\fuz{f})$ is positively expansive when the space of fuzzy sets $\Fc(X)$ is endowed with the metric $\rho$. Then there exists a constant $\delta>0$ such that for any pair of distinct sets $v \neq w$ in $\Fc(X)$ there is some $n \in \NN_0$ fulfilling that $\rho(\fuz{f}^n(v),\fuz{f}^n(w)) > \delta$. Applying once more statement (b) of Lemma~\ref{Lem:contraexpansive} for any $k > \max\{ 2 , \diam_d(X)^{-1} , \tfrac{1}{\delta} \}$, we finally get two distinct fuzzy sets $u\neq u^k$ in $\Fc(X)$ fulfilling that
		\[
		\delta > \tfrac{1}{k} \geq \rho(\fuz{f}^n(u),\fuz{f}^n(u^k)) \quad \text{ for every } n \in \NN_0,
		\]
		which is a contradiction.\qedhere
	\end{enumerate}
\end{proof}

The proof of Theorem~\ref{The:contraexpansive} is now complete. To finish Section~\ref{Sec_3:contraexpansive} we will solve \cite[Problem~3.7]{JardonSan2021_FSS_expansive} in the negative. This is the only problem, from \cite[Problems~3.6,~3.7~and~4.3]{JardonSan2021_FSS_expansive} and \cite[Problems~5.8~and~5.12]{JardonSanSan2020_FSS_some}, that cannot be directly solved via Theorem~\ref{The:contraexpansive} (see Remark~\ref{Rem:solved} for more details). 

\subsection{The solution to an expanding problem and a useful lemma}

In \cite{JardonSan2021_FSS_expansive} the authors also left \cite[Problem~3.7]{JardonSan2021_FSS_expansive} as an open question: {\em Let $f$ be a continuous map from a metric space $(X,d)$ into itself and assume that $(\Kc(X),\com{f})$ is expanding. Is it true that there exists~$\eps>0$ such that $d_{0}(\fuz{f}(u),\fuz{f}(v)) \geq d_{0}(u,v)$ for all $u,v \in \Fc(X)$ fulfilling that $d_{0}(u,v)<\eps$?} We will solve this problem in the negative with a counterexample. We also use such example of dynamical system to show that the inequality ``$d_{0}(u,u^k)\leq\tfrac{1}{k}$'' in statement (b) of Lemma~\ref{Lem:contraexpansive} can be strict:

\begin{example}\label{Exa:expanding}
	\textit{There exists a system $(X,f)$ such that $(\Kc(X),\com{f})$ is expanding while for each $\eps>0$ there exists $u^{\eps},v^{\eps} \in \Fc(X)$ fulfilling that $\rho(\fuz{f}(u^{\eps}),\fuz{f}(v^{\eps})) < \rho(u^{\eps},v^{\eps}) < \eps$ for all $\rho \in \{ d_{0} , d_{S} , d_{E} \}$.}
	\begin{proof}
		Let $X=\{a,b\}$ with the usual discrete metric, i.e.\ given $x,y \in X$ we have that $d(x,y)=0$ when $x=y$ and $d(x,y)=1$ when $x \neq y$. Let $f:X\longrightarrow X$ with $f(a)=a$ and $f(b)=a$. Since both metric spaces $(X,d)$ and $(\Kc(X),d_H)$ are discrete, it is not hard to see that $(X,f)$ and $(\Kc(X),\com{f})$ are expanding dynamical systems by picking any positive value $0<\eps\leq 1$ and any $\lambda>1$. Now, let $\eps>0$ and fix an integer $k > \max\{ 2 , \tfrac{1}{\eps} \}$. Similar to what we did in \eqref{eq:u.u^k}, we can consider the fuzzy sets~$u^{\eps}:=\chi_{\{a\}}+\tfrac{1}{2}\chi_{\{b\}}$ and $v^{\eps}:=\chi_{\{a\}}+\left(\tfrac{1}{2}-\tfrac{1}{k}\right)\chi_{\{b\}}$. Since $\diam_d(X)=1$ and $k>\max\{ 2 , \diam_d(X)^{-1} \}$, the proof of part~(b) of Lemma~\ref{Lem:contraexpansive} shows that $\rho(u^{\eps},v^{\eps})=\tfrac{1}{k}<\eps$. Noticing that
		\[
		\fuz{f}(u^{\eps}) = \max\{\chi_{\{f(a)\}},\tfrac{1}{2}\chi_{\{f(b)\}}\} = \chi_{\{a\}} = \max\{\chi_{\{f(a)\}},\left(\tfrac{1}{2}-\tfrac{1}{k}\right)\chi_{\{f(b)\}}\} = \fuz{f}(v^{\eps}),
		\]
		we have that $\rho(\fuz{f}(u^{\eps}),\fuz{f}(v^{\eps})) = 0 < \rho(u^{\eps},v^{\eps}) = \tfrac{1}{k} < \eps$, finishing the counterexample.
	\end{proof}
\end{example}

The system $(X,f)$ from Example~\ref{Exa:expanding} admits several variations; for instance, its disjoint union with any expansive system is again a counterexample to \cite[Problem~3.7]{JardonSan2021_FSS_expansive} for which the expanding property of $(\Kc(X),\com{f})$ is not necessarily trivial (the details are left to the reader). Following \cite{JardonSan2021_FSS_expansive,WuZhangChen2020_FSS_answers} we will end this section proving that, although Theorem~\ref{The:contraexpansive} prevents $(\Fc_{0}(X),\fuz{f})$, $(\Fc_{S}(X),\fuz{f})$ and $(\Fc_{E}(X),\fuz{f})$ from being contractive or expansive in general, these systems may still behave in an ``almost'' contractive or expansive manner whenever $(X,f)$ itself has such a property.

\begin{lemma}\label{Lem:almost.contraexpansive}
	For a continuous map $f:X\longrightarrow X$ on a metric space $(X,d)$ and any $\rho \in \{ d_{0} , d_{S} , d_{E} \}$:
	\begin{enumerate}[{\em(a)}]
		\item If $(X,f)$ is a contractive dynamical system, then $\rho(\fuz{f}(u),\fuz{f}(v)) \leq \rho(u,v)$ for all $u,v \in \Fc(X)$.
		
		\item If $(X,f)$ is an expansive dynamical system, then $\rho(\fuz{f}(u),\fuz{f}(v)) \geq \rho(u,v)$ for all $u,v \in \Fc(X)$.
	\end{enumerate}
\end{lemma}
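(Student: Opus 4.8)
The plan is to exploit the defining structure of the three metrics $d_0$, $d_S$, $d_E$ in terms of the levels $u_\alpha$ and the endograph $\operatorname{end}(u)$, and to push the contractive (resp.\ expansive) inequality for $(X,d)$ through the Hausdorff distances on $\mathcal{K}(X)$ and on $X\times\mathbb{I}$. The essential observation, recorded in Proposition~\ref{Pro:fuz{f}}(a), is that $\left[\fuz{f}(u)\right]_\alpha = \com{f}(u_\alpha)$ for every $\alpha$; so the levels of $\fuz{f}(u)$ are just the $\com{f}$-images of the levels of $u$. Hence the whole argument reduces to a monotonicity statement for $\com{f}$ with respect to $d_H$, together with the fact that applying a level-preserving homeomorphism $\xi\in\Tc$ to $u$ commutes appropriately with $\fuz{f}$.

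First I would treat the supremum metric $d_\infty$ as an auxiliary tool. For part (a), if $f$ is $\lambda$-contractive with $\lambda\in[0,1[$, then a direct estimate shows $d_H(\com{f}(K),\com{f}(L))\le\lambda\, d_H(K,L)\le d_H(K,L)$ for all $K,L\in\mathcal{K}(X)$: indeed every point of $f(K)$ is $f(x)$ for some $x\in K$, and $d(f(x),L')\le\lambda\,d(x,L)$ where $L'=f(L)$. Taking suprema over $\alpha\in\mathbb{I}$ of $d_H(\com{f}(u_\alpha),\com{f}(v_\alpha))\le d_H(u_\alpha,v_\alpha)$ gives $d_\infty(\fuz{f}(u),\fuz{f}(v))\le d_\infty(u,v)$. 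The same contraction estimate applied on the product space $(X\times\mathbb{I},\com{d})$ — where the second coordinate is untouched by $\fuz{f}$ — will yield the endograph version $d_E(\fuz{f}(u),\fuz{f}(v))\le d_E(u,v)$, using that $\operatorname{end}(\fuz{f}(u))$ is built from the levels $\com{f}(u_\alpha)$ stacked at height $\alpha$.

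The remaining, and genuinely delicate, point is the Skorokhod metric $d_0$, since its definition quantifies over reparametrizations $\xi\in\Tc$. The plan is to show that a reparametrization witnessing $d_0(u,v)\le\eps$ also witnesses $d_0(\fuz{f}(u),\fuz{f}(v))\le\eps$. Concretely, if $\xi\in\Tc$ satisfies $\sup_\alpha|\xi(\alpha)-\alpha|\le\eps$ and $d_\infty(u,\xi\circ v)\le\eps$, I would verify that the \emph{same} $\xi$ works for the images, i.e.\ that $d_\infty(\fuz{f}(u),\xi\circ\fuz{f}(v))\le\eps$. This rests on the identity $[\xi\circ\fuz{f}(v)]_\alpha=[\fuz{f}(v)]_{\xi^{-1}(\alpha)}=\com{f}(v_{\xi^{-1}(\alpha)})=\com{f}([\xi\circ v]_\alpha)$, so that $d_H([\fuz{f}(u)]_\alpha,[\xi\circ\fuz{f}(v)]_\alpha)=d_H(\com{f}(u_\alpha),\com{f}([\xi\circ v]_\alpha))\le d_H(u_\alpha,[\xi\circ v]_\alpha)$ by the monotonicity of $\com{f}$. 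Taking the infimum over admissible $\eps$ then gives $d_0(\fuz{f}(u),\fuz{f}(v))\le d_0(u,v)$. This level-commutation of $\xi$ with $\fuz{f}$ is the main obstacle, and it is exactly the step where the structural identity $[\fuz{f}(u)]_\alpha=\com{f}(u_\alpha)$ does the decisive work.

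For part (b), the expansive case, the strategy is parallel but reversed. If $f$ satisfies $d(f(x),f(y))\ge\lambda\,d(x,y)$ with $\lambda>1$, then $f$ is injective, and I would establish the reverse monotonicity $d_H(\com{f}(K),\com{f}(L))\ge d_H(K,L)$ on compacta; the injectivity ensures $f^{-1}$ is well defined on $f(X)$ and $1/\lambda$-Lipschitz there, which converts the expansion on $X$ into a contraction of the inverse and hence the expansion estimate $d_H(f(K),f(L))\ge\lambda\,d_H(K,L)\ge d_H(K,L)$ for the forward images. Propagating this through levels and endographs as above gives $d_\infty$ and $d_E$ immediately. For $d_0$ the argument is slightly more careful: I would use that the optimal reparametrization for the images can be pulled back to a valid reparametrization for $u,v$ themselves, so that $d_0(\fuz{f}(u),\fuz{f}(v))\ge d_0(u,v)$; here injectivity of $f$ guarantees that the level sets of the images have the same combinatorial/height structure as those of $u,v$, so no reparametrization is lost. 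I expect this direction to be marginally harder than (a) only because one must argue that the infimum defining $d_0$ on the images is not made artificially small, which again follows from the identity $[\fuz{f}(u)]_\alpha=\com{f}(u_\alpha)$ and the expansion bound for $\com{f}$.
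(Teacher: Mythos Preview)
Your proposal is correct and proceeds along the same lines as the paper: for $d_S$ and $d_E$ the paper gives precisely the pointwise endograph argument you sketch (with the $\alpha=0$ stratum handled separately, since $\eend(w)$ always contains $X\times\{0\}$), while for the $d_0$ case of (a) and all of (b) the paper simply cites \cite{WuZhangChen2020_FSS_answers} and \cite{JardonSan2021_FSS_expansive}, where your reparametrization-preservation and inverse-Lipschitz arguments are the standard ones. You do not explicitly treat $d_S$, but it is the simpler variant of your $d_E$ argument and is covered by the same template.
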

\begin{proof}
	(a): The case $\rho=d_{0}$ was proved in \cite[Lemma~6]{WuZhangChen2020_FSS_answers}. Thus, let $u,v \in \Fc(X)$ and note that, by statement~(a) of Proposition~\ref{Pro:Hausdorff}, the definition of~$d_{S}$~and~$d_{E}$, and their symmetry, it suffices to show that $\send(\fuz{f}(u)) \subset \send(\fuz{f}(v))+\eps$ or that $\eend(\fuz{f}(u)) \subset \eend(\fuz{f}(v))+\eps$ whenever $\send(u) \subset \send(v)+\eps$ or $\eend(u) \subset \eend(v) + \eps$ for some $\eps>0$. We show the endograph case, the sendograph one being simpler, i.e.\ assume that $\eend(u) \subset \eend(v)+\eps$. Let $(z,\alpha) \in \eend(\fuz{f}(u)) \cap ([\fuz{f}(u)]_0\times\II) = \send(\fuz{f}(u))$, so that $z \in [\fuz{f}(u)]_{\alpha}$. Then there exist $x \in u_{\alpha}$ with $f(x)=z$, so that $(x,\alpha) \in \eend(u)$ and by assumption there is some $(y,\beta) \in \eend(v)$ with $\max\{ d(x,y) , |\alpha-\beta| \} = \com{d}( (x,\alpha) , (y,\beta) ) \leq \eps$. Since $(X,f)$ is contractive,
	\[
	\com{d}( (z,\alpha) , (f(y),\beta) ) = \max\{ d(f(x),f(y)) , |\alpha-\beta| \} \leq \max\{ d(x,y) , |\alpha-\beta| \} \leq \eps,
	\]
	so that $(z,\alpha) \in \{(f(y),\beta)\}+\eps \subset \eend(\fuz{f}(v))+\eps$. For the case $(z,\alpha) \in \eend(\fuz{f}(u))\setminus([\fuz{f}(u)]_0\times\II)$ we have that $\alpha=0$ and hence that $(z,\alpha) \in \eend(\fuz{f}(v)) \subset \eend(\fuz{f}(v))+\eps$, finishing the proof.
	
	(b): This was proved in \cite[Propositions~3.3~and~3.4]{JardonSan2021_FSS_expansive}.
\end{proof}

We end Section~\ref{Sec_3:contraexpansive} here, but contractive dynamical systems will be revisited in Subsection~\ref{SubSec_5.2:contractions} in connection with the shadowing property, and Lemma~\ref{Lem:almost.contraexpansive} will be crucial (see Theorem~\ref{The:contractions}).

\section{Chain recurrence, chain transitivity and chain mixing}\label{Sec_4:chains}

In this section we study the notions of chain recurrence, chain transitivity and chain mixing. We begin by proving Theorem~\ref{The:chains}, which asserts that each of these properties is presented by $(\Kc(X),\com{f})$ if and only if $(\Fc_{\infty}(X),\fuz{f})$, $(\Fc_{0}(X),\fuz{f})$ and $(\Fc_{S}(X),\fuz{f})$ do as well. These equivalences are, to the best of our knowledge, completely new results. We then show that the system $(\Fc_{E}(X),\fuz{f})$ is chain recurrent, chain transitive and chain mixing if and only if the underlying map $f:(X,d)\longrightarrow(X,d)$ has dense range (see Theorem~\ref{The:chains.E}). This exhibits, again, an extreme behaviour for the endograph metric $d_{E}$.

Along this section, given any system $(X,f)$ on a metric space $(X,d)$ and any positive integer $N \in \NN$, we will denote by $f_{(N)}:X^N\longrightarrow X^N$ the {\em $N$-fold direct product} of $f$ with itself. That is, the pair denoted by $(X^N,f_{(N)})$ will be the dynamical system
\[
f_{(N)} := \underbrace{f\times\cdots\times f}_{N} : \underbrace{X\times\cdots\times X}_{N} \longrightarrow \underbrace{X\times\cdots\times X}_{N},
\]
where $X^N:=X\times\cdots\times X$ is the {\em $N$-fold direct product} of $X$, and where the respective {\em $N$-fold direct map} is defined as $f_{(N)}\left((x_1,...,x_N)\right) := (f(x_1),...,f(x_N))$ for each $N$-tuple $(x_1,...,x_N) \in X^N$. In addition, the space $X^N$ will be endowed with the metric $d_{(N)}:X^N\times X^N\longrightarrow[0,\infty[$ defined as
\[
d_{(N)}\left( (x_1,...,x_N) , (y_1,...,y_N) \right) := \max_{1\leq l\leq N} d(x_l,y_l) \quad \text{ for each pair } (x_1,...,x_N) , (y_1,...,y_N) \in X^N.
\]

\subsection{Chain-type properties for the supremum, Skorokhod and sendograph metrics}\label{SubSec_4.1:chains}

Let $f:X\longrightarrow X$ be a continuous map acting on a metric space $(X,d)$. Then:
\begin{enumerate}[--]
	\item given a positive value $\delta>0$ and a positive integer $n \in \NN$, we say that a finite sequence $(x_j)_{j=0}^n$ of points in $X$ is a {\em $d$-$\delta$-chain} (from $x_0$ to $x_n$, of length $n$, and for the map $f$) if we have that
	\[
	d(f(x_j),x_{j+1})<\delta \quad \text{ for all } 0\leq j\leq n-1;
	\]

	\item a point $x \in X$ is called {\em chain recurrent} for the map $f$ if for each positive value $\delta>0$ there exists a $d$-$\delta$-chain from $x$ to itself for the map $f$; and the system $(X,f)$ is said to be {\em chain recurrent} if every point of the underlying space $X$ is chain recurrent for the map $f$;
	
	\item the system $(X,f)$ is called {\em chain transitive} if for each pair of points $x,y \in X$ and each $\delta>0$ there exists a $d$-$\delta$-chain from $x$ to $y$ for the map $f$;
	
	\item the system $(X,f)$ is called {\em chain weakly-mixing} if the product system $(X^2,f_{(2)})$ is chain transitive;
	
	\item and the system $(X,f)$ is called {\em chain mixing} if for each pair of points $x,y \in X$ and each $\delta>0$ there is a positive integer $n_0 \in \NN$ such that for every integer $n \geq n_0$ there exists a $d$-$\delta$-chain from $x$ to $y$ of length $n$ for the map $f$.
\end{enumerate}
From now on we will usually omit the words ``\textit{for the map $f$}''. These properties generalize the notions of {\em topological recurrence} (or {\em non-wandering}, see \cite[Remark~2.1]{AlvarezLoPe2025_FSS_recurrence}), {\em topological transitivity}, {\em topological weak-mixing} and {\em topological mixing} respectively (see \cite[Section~2]{RichersonWise2008_TA_chain}). The ``\textit{$\delta$-chains}'' were introduced by Conley \cite{Conley1988_ETDS_the}, and then further developed by Conley~\cite{Conley1978_book_isolated} and Bowen~\cite{Bowen1975_JDE_omega-limit}, together with the notion of {\em pseudo-trajectory} (see Section~\ref{Sec_5:shadowing} below). Such concepts have had a great impact in the qualitative theory of Dynamical Systems, and they have recently been studied in Linear Dynamics (see \cite{BernardesPe2025_AdvM_on,LopezPa2025_JDE_shifts}).

In the context of the compact and fuzzy extensions $(\Kc(X),\com{f})$ and $(\Fc(X),\fuz{f})$, these chain-type properties have been considered in \cite{AyalaDaRo2018_arXiv_dynamics,FernandezGoodPulRa2015_CSF_chain,KhanKu2013_FEJDS_recurrence}, albeit under additional assumptions. In all three works, the underlying metric space $(X,d)$ is assumed to be compact, a condition that is repeatedly used in their arguments. Moreover, in \cite{KhanKu2013_FEJDS_recurrence}, which is the only one of these works in which $\fuz{f}$ is considered, the dynamics of this map are studied in a space larger than that of normal fuzzy sets: they do not require their fuzzy sets to attain the value $1$ and, consequently, their system cannot be chain transitive nor chain mixing. As we work in general, not necessarily compact metric spaces, we will briefly reprove the implications established in \cite{AyalaDaRo2018_arXiv_dynamics,FernandezGoodPulRa2015_CSF_chain,KhanKu2013_FEJDS_recurrence} in the following theorem, which is the first main result of this section. To the best of our knowledge, the fuzzy part of our next result is completely new.

\begin{theorem}\label{The:chains}
	Let $f:X\longrightarrow X$ be a continuous map on a metric space $(X,d)$. Then, the following statements are equivalent:
	\begin{enumerate}[{\em(i)}]
		\item $(X,f)$ is chain recurrent (resp.\ chain weakly-mixing or chain mixing);
		
		\item $(X^N,f_{(N)})$ is chain recurrent (resp.\ chain transitive or chain mixing) for all $N \in \NN$;
		
		\item $(\Kc(X),\com{f})$ is chain recurrent (resp.\ chain transitive or chain mixing);
		
		\item $(\Fc_{\infty}(X),\fuz{f})$ is chain recurrent (resp.\ chain transitive or chain mixing);
		
		\item $(\Fc_{0}(X),\fuz{f})$ is chain recurrent (resp.\ chain transitive or chain mixing);
		
		\item $(\Fc_{S}(X),\fuz{f})$ is chain recurrent (resp.\ chain transitive or chain mixing).
	\end{enumerate}
\end{theorem}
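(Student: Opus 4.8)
The plan is to prove the six statements equivalent by establishing the two blocks
\[
\text{(i)}\Leftrightarrow\text{(ii)}\Leftrightarrow\text{(iii)}\qquad\text{and}\qquad\text{(iii)}\Rightarrow\text{(iv)}\Rightarrow\text{(v)}\Rightarrow\text{(vi)}\Rightarrow\text{(iii)},
\]
treating the chain-recurrence, the weak-mixing/transitivity, and the mixing rows in parallel and relying on a few essentially independent mechanisms. Two of the arrows are immediate. Since $d_{E}\le d_{S}\le d_{0}\le d_{\infty}$ (Proposition~\ref{Pro:fuzzy.metrics}), every $d_{\infty}$-$\delta$-chain of fuzzy sets is simultaneously a $d_{0}$- and a $d_{S}$-$\delta$-chain between the same endpoints and of the same length; this gives (iv)$\Rightarrow$(v)$\Rightarrow$(vi) at once. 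For (vi)$\Rightarrow$(iii) I would use the $0$-level (support) map $s:u\mapsto u_{0}$, which is equivariant because $[\fuz{f}(u)]_{0}=f(u_{0})=\com{f}(u_{0})$ by Proposition~\ref{Pro:fuz{f}}, and $1$-Lipschitz from $\Fc_{S}(X)$ to $(\Kc(X),d_H)$ because $d_H(u_{0},v_{0})\le d_{S}(u,v)$ by Proposition~\ref{Pro:fuzzy.metrics}; since $s(\chi_K)=K$, applying $s$ to any $d_{S}$-chain joining $\chi_K$ to $\chi_L$ yields a $d_H$-chain of the same length joining $K$ to $L$, and the three properties transfer verbatim.

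For the classical block (i)$\Leftrightarrow$(ii)$\Leftrightarrow$(iii) I would first note that a $d_{(N)}$-$\delta$-chain in $X^N$ is exactly $N$ synchronous $d$-$\delta$-chains in $X$ of equal length, so (i)$\Leftrightarrow$(ii) is purely combinatorial: for chain recurrence one equalizes the lengths of $N$ individual return-chains by repeating each a common-multiple number of times; for chain mixing one matches all sufficiently large lengths coordinatewise; and for the weak-mixing/transitivity row one invokes the standard bootstrapping that chain-transitivity of $(X^2,f_{(2)})$ forces chain-transitivity of every $(X^N,f_{(N)})$. To pass between products and the hyperspace I would use the $1$-Lipschitz equivariant surjection $q:(x_1,\dots,x_N)\mapsto\{x_1,\dots,x_N\}$ onto the finite subsets, so that $X^N$-chains push forward to $\Kc(X)$-chains via the union estimate of Proposition~\ref{Pro:Hausdorff}(b); conversely, given a $d_H$-$\delta$-chain $A_0,\dots,A_n$ of compact sets, one extracts a genuine $d$-$\delta$-chain by choosing $x_j\in A_j$ greedily from $f(A_j)\subset A_{j+1}+\delta$, with the endpoint forced by $A_n$. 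Density of finite sets, Proposition~\ref{Pro:Hausdorff}(b), and the fact that all these properties force $f$ to have dense range (one must close a chain back onto a prescribed set) then upgrade finite-set chains to arbitrary compact-set chains.

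The crux is (iii)$\Rightarrow$(iv): manufacturing a $d_{\infty}$-$\delta$-chain of fuzzy sets out of hyperspace chains level by level. I would fix levels $0=\alpha_0<\dots<\alpha_N=1$ via Lemma~\ref{Lem:eps.pisos}, so that staircase fuzzy sets with these levels correspond bijectively to nested tuples $(L_1\supseteq\dots\supseteq L_N)$ in $\Kc(X)^N$, under which $\fuz{f}$ acts coordinatewise and $d_{\infty}$ becomes $\max_i d_H$. A chain in this nested product is then assembled from $N$ hyperspace chains, synchronized to a common length exactly as in the product block, and kept nested at each time step by replacing the $i$-th set with $\bigcup_{i'\ge i}(\text{$i'$-th set})$; Proposition~\ref{Pro:Hausdorff}(b) guarantees such unions remain $\delta$-chains and do not enlarge the Hausdorff errors. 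For the mixing row one additionally demands all large lengths from the per-level chains and synchronizes to a common one.

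The genuinely delicate point, and the one I expect to be the main obstacle, is attaching the \emph{true} endpoints $u$ and $v$ rather than their staircase approximations: on a non-compact $X$ the map $f$ need not be uniformly continuous, so $\fuz{f}$ of an approximation of $u$ is uncontrolled and a naive prepended step $u\to\tilde u$ fails. The remedy I would adopt is to approximate the \emph{images} directly—take the first intermediate set to be a staircase approximation of $\fuz{f}(u)$ itself, so the initial step costs less than $\delta$ by Lemma~\ref{Lem:eps.pisos}, and choose the penultimate set so that its $\fuz{f}$-image approximates $v$, which is available by running each per-level hyperspace chain one step short of $v_{\alpha_i}$ and reading off $d_H(f(B_i),v_{\alpha_i})<\delta$ from the chain condition. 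Reconciling this endpoint attachment with the nesting and the length-synchronization across all levels is the fiddly part the construction must carry out.
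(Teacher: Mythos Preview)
Your architecture is the paper's: the same two blocks, the same use of the metric inequalities for (iv)$\Rightarrow$(v)$\Rightarrow$(vi), the same $0$-level equivariant Lipschitz map for (vi)$\Rightarrow$(iii), the same finite-set/greedy-extraction passage between products and $\Kc(X)$, and the same staircase construction via Lemma~\ref{Lem:eps.pisos} for (iii)$\Rightarrow$(iv). Two points of comparison are worth flagging.

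First, the paper makes explicit what you use only implicitly: it formulates a statement (iii$'$), namely that $(\Kc(X)^N,\com{f}_{(N)})$ has the relevant chain property for all $N$, observes that the argument for (ii)$\Rightarrow$(iii) actually gives (ii)$\Rightarrow$(iii$'$), and then proves (iii$'$)$\Rightarrow$(iv). This is exactly your ``$N$ hyperspace chains synchronized to a common length'', but naming (iii$'$) keeps the transitivity row honest: plain chain transitivity of $\Kc(X)$ does \emph{not} by itself give $N$ synchronized chains, and what you are really invoking is the already-proved equivalence with (ii).

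Second, you overcomplicate the endpoint attachment. Your worry that ``$f$ need not be uniformly continuous, so $\fuz{f}$ of an approximation of $u$ is uncontrolled'' is misplaced: only \emph{pointwise} continuity of $\fuz{f}$ at the single fuzzy set $u$ is needed, and this is always available. The paper simply chooses $0<\eps\le\delta/2$ so that $\fuz{f}(\Bc_{\infty}(u,\eps))\subset\Bc_{\infty}(\fuz{f}(u),\delta/2)$, picks the Lemma~\ref{Lem:eps.pisos} levels fine enough that the staircase $u'$ satisfies $d_{\infty}(u,u')<\eps$, and then the first step $d_{\infty}(\fuz{f}(u),u^1)\le d_{\infty}(\fuz{f}(u),\fuz{f}(u'))+d_{\infty}(\fuz{f}(u'),u^1)<\delta/2+\delta/2$ goes through directly. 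Your alternative---building the first intermediate set as a staircase approximation of $\fuz{f}(u)$---would also work, but it is an unnecessary detour.
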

Before proving Theorem~\ref{The:chains} let us mention that condition (i) cannot be relaxed for the case of chain transitivity. In fact, let $X=\{a,b\}$ with the discrete metric described in Example~\ref{Exa:expanding}, and let $f(a)=b$ and $f(b)=a$. Thus, the dynamical system $(X,f)$ is chain transitive, but for the map $\com{f}$ there is no $d_H$-$\delta$-chain in $\Kc(X)$ from $\{a,b\}$ to $\{a\}$ whenever $0<\delta<1$. An example without isolated points is given in \cite[Example~2]{RichersonWise2008_TA_chain}, but it is, in spirit, completely analogous to the example exhibited here.
\begin{proof}[Proof of Theorem~\ref{The:chains}]
	(i) $\Rightarrow$ (ii): Assume first that $(X,f)$ is chain recurrent, let $N \in \NN$, fix any point $(x_1,x_2,...,x_N) \in X^N$ and consider any $\delta>0$. Thus, for each $1\leq l\leq N$ there exists a $d$-$\delta$-chain from $x_l$ to itself and of length $n_l \in \NN$, denoted by $(x_j^l)_{j=0}^{n_l}$. Letting $n := \prod_{j=1}^{N} n_j$ and concatenating the chain $(x_j^l)_{j=0}^{n_l}$ with itself $n/n_l$-times we get, for each $1 \leq l \leq N$, a $d$-$\delta$-chain $(y_j^l)_{j=0}^n$ from $x_l$ to itself and of length $n$. Thus, the sequence $((y^1_j,y^2_j,...,y^N_j))_{j=0}^n$ is a $d_{(N)}$-$\delta$-chain from $(x_1,x_2,...,x_N)$ to itself.
	
	Let now $(X,f)$ be chain weakly-mixing. It easily follows that $(X,f)$ itself is chain transitive, so that the map $f:(X,d)\longrightarrow(X,d)$ has dense range. Now we will complete the proof by induction: assume that $(X^N,f_{(N)})$ is chain transitive for some integer $N \geq 2$ and let us prove that in this case the system $(X^{N+1},f_{(N+1)})$ is also chain transitive. In fact, given two points $(x_1,x_2,...,x_N,x_{N+1})$ and $(y_1,y_2,...,y_N,y_{N+1})$ in $X^{N+1}$ and any $\delta>0$ we can use that $(X^N,f_{(N)})$ is chain transitive to find two $d_{(N)}$-$\delta$-chains: the first one $((x^1_j,x^2_j,...,x^N_j))_{j=0}^{n_1}$ from $(x_1,x_2,x_3,...,x_N)$ to $(x_1,x_1,x_3,...,x_N)$, and the second one $((y^1_j,y^2_j,...,y^N_j))_{j=0}^{n_2}$ from $(y_1,y_1,y_3,...,y_N)$ to $(y_1,y_2,y_3,...,y_N)$. Using that $f^{n_2}$ has dense range because $f$ does, we can then choose a point $z \in f^{-n_2}(\Bc_d(y_{N+1},\delta))$. Again, since $(X^N,f_{(N)})$ is chain transitive we can find a $d_{(N)}$-$\delta$-chain $((z^1_j,z^2_j,...,z^N_j))_{j=0}^{n_3}$ from $(x_1,x_3,...,x_N,f^{n_1}(x_{N+1}))$ to $(y_1,y_3,...,y_N,z)$. Concatenating the following finite sequences of $(N+1)$-tuples
	\[
	((x^1_j,x^2_j,...,x^N_j,f^j(x_{N+1})))_{j=0}^{n_1}, \quad ((z^1_j,z^1_j,z^2_j,...,z^N_j))_{j=0}^{n_3}, \quad \text{ and } ((y^1_j,y^2_j,...,y^N_j,f^j(z)))_{j=0}^{n_2},
	\]
	we get a $d_{(N+1)}$-$\delta$-chain from $(x_1,x_2,...,x_N,x_{N+1})$ to $(y_1,y_2,...,y_N,f^{n_2}(z))$. Since $d(f^{n_2}(z),y_{N+1})<\delta$, replacing $f^{n_2}(z)$ with $y_{N+1}$ we get a $d_{(N+1)}$-$\delta$-chain from $(x_1,x_2,...,x_N,x_{N+1})$ to $(y_1,y_2,...,y_N,y_{N+1})$.
	
	Finally, assume that $(X,f)$ is chain mixing, let $N \in \NN$, fix any pair of points $(x_1,x_2,...,x_N)$ and~$(y_1,y_2,...,y_N)$ in $X^N$ and consider any $\delta>0$. Using the chain mixing condition on $x_l$ and $y_l$ for every $1 \leq l \leq N$ we can find some positive integer $n_0 \in \NN$ such that, for any $n \geq n_0$ and any $1 \leq l \leq N$, there exists a $d$-$\delta$-chain $(x^l_j)_{j=0}^n$ from $x_l$ to $y_l$ of length $n$. Hence, the sequence $((x^1_j,x^2_j,...,x^N_j))_{j=0}^n$ is a $d_{(N)}$-$\delta$-chain from $(x_1,x_2,...,x_N)$ to $(y_1,y_2,...,y_N)$ of length $n \geq n_0$, finishing the proof.
	
	(ii) $\Rightarrow$ (iii): For this implication we will use that \textit{the finite subsets of $X$ are dense in $(\Kc(X),d_H)$}, which can be trivially deduced from the definition of the Vietoris topology (see Subsection~\ref{SubSec_2.1:extensions}). Thus, given a pair of (not necessarily distinct) compact sets $K,L \in \Kc(X)$ and any $\delta>0$, using the previous density fact together with the $d_H$-continuity of $\com{f}$ we can find two (not necessarily distinct) finite sets $K'=\{x_1,x_2,...,x_{N_K}\} \subset X$ and $L'=\{y_1,y_2,...,y_{N_L}\} \subset X$, with cardinals $N_K,N_L \in \NN$, and such that
	\[
	d_H(K,K')<\tfrac{\delta}{2}, \quad d_H(\com{f}(K),\com{f}(K'))<\tfrac{\delta}{2} \quad \text{ and } d_H(L',L)<\tfrac{\delta}{2}.
	\]
	Thus, setting $N := \max\{ N_K , N_L \}$ and repeating points if necessary, i.e.\ when $N_K<N_L$ or $N_L<N_K$, we can construct two $N$-tuples $(x_1,x_2,...,x_N)$ and $(y_1,y_2,...,y_N)$ in the product space $X^N$ fulfilling that $K'=\{x_1,x_2,...,x_N\}$ and $L'=\{y_1,y_2,...,y_N\}$. These $N$-tuples can be taken equal when $K=L$. Note that, given any $d_{(N)}$-$\tfrac{\delta}{2}$-chain $((x^1_j,x^2_j,...,x^N_j))_{j=0}^n$ from $(x_1,x_2,...,x_N)$ to $(y_1,y_2,...,y_N)$, letting
	\[
	K_0 := K, \quad K_n := L, \quad \text{ and } K_j := \{ x^1_j , x^2_j , ... , x^N_j \} \quad \text{ for each } 1 \leq j \leq n-1,
	\]
	it can be checked that $(K_j)_{j=0}^n$ is a $d_H$-$\delta$-chain from $K$ to $L$. Hence, if the system~$(X^N,f_{(N)})$ is chain recurrent (resp.\ chain transitive or chain mixing) for all $N \in \NN$, then one can easily deduce that so is the extension $(\Kc(X),\com{f})$ by considering adequate $d_{(N)}$-$\tfrac{\delta}{2}$-chains as exposed in the previous reasoning.
	
	\noindent\textit{The implication {\em(ii)} $\Rightarrow$ {\em(iii)} is now complete. However, below we will use that if given any $N \in \NN$ we consider $N$ pairs of (not necessarily distinct) sets $K^1,K^2,...,K^N \in \Kc(X)$ and $L^1,L^2,...,L^N \in \Kc(X)$, then a completely similar proof shows that {\em(ii)} implies the following strengthened version of {\em(iii)}:}
	\begin{enumerate}
		\item[(iii')] {\em $(\Kc(X)^N,\com{f}_{(N)})$ is chain recurrent (resp.\ chain transitive or chain mixing) for all $N \in \NN$.}
	\end{enumerate}
	
	(iii) $\Rightarrow$ (i): Assume first that $(\Kc(X),\com{f})$ is chain recurrent, fix any point $x \in X$ and let $\delta>0$. Thus, there exists a $d_H$-$\delta$-chain $(K_j)_{j=0}^n$ from $\{x\}$ to itself. Let $x_0 := x \in \{x\} = K_0$, and from there pick points $x_j \in K_j$ for each $1 \leq j\leq n$ fulfilling that $d(f(x_j),x_{j+1})<\delta$ for all $0 \leq j \leq n-1$. It is not hard to check that $(x_j)_{j=0}^n$ is a $d$-$\delta$-chain from $x$ to itself, so that $(X,f)$ is chain recurrent.
	
	Let now $(\Kc(X),\com{f})$ be chain transitive, fix points $(x_1,x_2),(y_1,y_2) \in X^2$ and let $\delta>0$. Thus, picking any point $z \in X$, there exist two $d_H$-$\delta$-chains: the first one $(K_j)_{j=0}^{n_1}$ from $\{x_1,x_2\}$ to $\{z\}$, and the second one $(L_j)_{j=0}^{n_2}$ from $\{z\}$ to $\{y_1,y_2\}$. Let $x^1_0 := x_1$, $x^2_0 := x_2$, $y^1_{n_2} := y_1$ and $y^2_{n_2} := y_2$. Hence, given any $l \in \{1,2\}$ we can pick points $x^l_j \in K_j$ for each $1 \leq j \leq n_1$ fulfilling that $d(f(x^l_j),x^l_{j+1})<\delta$ for all $0 \leq j \leq n_1-1$, but we can also pick points $y^l_j \in L_j$ for each $0 \leq j \leq n_2-1$ fulfilling that $d(f(y^l_j),y^l_{j+1})<\delta$ for all $0 \leq j \leq n_2-1$. Concatenating the sequences $((x^1_j,x^2_j))_{j=0}^{n_1}$ and $((y^1_j,y^2_j))_{j=0}^{n_2}$ we get a $d_{(2)}$-$\delta$-chain from $(x_1,x_2)$ to $(y_1,y_2)$, so that $(X,f)$ is chain weakly-mixing.
	
	Finally, assume that $(\Kc(X),\com{f})$ is chain mixing, fix any pair of points $x,y \in X$ and let $\delta>0$. Using the chain mixing condition on $\{x\}$ and $\{y\}$ we can find a positive integer $n_0 \in \NN$ such that for any $n \geq n_0$ there exists a $d_H$-$\delta$-chain $(K_j)_{j=0}^n$ from $\{x\}$ to $\{y\}$. Letting $x_0 := x \in \{x\} = K_0$, and from there picking points $x_j \in K_j$ for each $1 \leq j\leq n$ fulfilling that $d(f(x_j),x_{j+1})<\delta$ for all $0 \leq j \leq n-1$,  it is not hard to check that $(x_j)_{j=0}^n$ is a $d$-$\delta$-chain from $x$ to $y$ of length $n$. Thus, $(X,f)$ is chain mixing.
	
	\noindent\textit{The equivalences {\em(i)} $\Leftrightarrow$ {\em(ii)} $\Leftrightarrow$ {\em(iii)} are now complete. However, as advanced above, in the following implication we will use that {\em(i)} $\Leftrightarrow$ {\em(ii)} $\Leftrightarrow$ {\em(iii)} $\Leftrightarrow$ {\em(iii')}, i.e.\ we use {\em(iii')}, the strengthened version of statement {\em(iii)} that we have included after proving the implication {\em(ii)} $\Rightarrow$ {\em(iii)}.}
	
	(iii') $\Rightarrow$ (iv): Given a pair of (not necessarily distinct) fuzzy sets $u,v \in \Fc(X)$ and any $\delta>0$, using the $d_{\infty}$-continuity of the map $\fuz{f}$ we can find some positive value $0 < \eps \leq \tfrac{\delta}{2}$ fulfilling that
	\begin{equation}\label{eq:fuz{f}(B_infty(u,eps))}
		\fuz{f}\left( \Bc_{\infty}(u,\eps) \right) \subset \Bc_{\infty}(\fuz{f}(u),\tfrac{\delta}{2}).
	\end{equation}
	Applying Lemma~\ref{Lem:eps.pisos}, two times when $u \neq v$, we can find values $0 = \alpha_0 < \alpha_1 < \alpha_2 < ... < \alpha_N = 1$ such that $\max\{ d_H(u_{\alpha}, u_{\alpha_{l+1}}) , d_H(v_{\alpha}, v_{\alpha_{l+1}}) \} < \eps$ for all $\alpha \in \ ]\alpha_l, \alpha_{l+1}]$ and $0\leq l\leq N-1$, but also fulfilling  that $\max\{ d_H(u_0,u_{\alpha_1}) , d_H(v_0,v_{\alpha_1}) \} < \eps$. Considering the fuzzy sets
	\[
	u' := \max_{1\leq l\leq N} \left( \alpha_l \cdot \chi_{u_{\alpha_l}} \right) \quad \text{ and } \quad v' := \max_{1\leq l\leq N} \left( \alpha_l \cdot \chi_{v_{\alpha_l}} \right),
	\]
	we can check that $\max\{ d_{\infty}(u,u') , d_{\infty}(v',v) \}<\eps$. In fact, since $u'_0=u_{\alpha_1}$ and $u'_{\alpha} = u_{\alpha_{l+1}}$ for every $\alpha \in \ ]\alpha_l,\alpha_{l+1}]$ with $0 \leq l \leq N-1$, we have that
	\[
	d_{\infty}(u,u') = \sup_{\alpha\in\II} d_H(u_{\alpha},u'_{\alpha}) = \left\{
	\begin{array}{lcc}
		d_H\left( u_{\alpha} , u_{\alpha_1} \right), & \text{ if } \alpha \in [0,\alpha_1] \hspace{3.1cm} \\[5pt]
		d_H\left( u_{\alpha} ,u_{\alpha_{l+1}} \right), & \text{ if } \alpha \in \ ]\alpha_l,\alpha_{l+1}] \, , 1\leq l\leq N-1
	\end{array}
	\right\} < \eps,
	\]
	and the same happens for $d_{\infty}(v',v)$. We claim that, given any $(d_H)_{(N)}$-$\tfrac{\delta}{2}$-chain $((K^1_j,K^2_j,...,K^N_j))_{j=0}^n$ from $(u_{\alpha_1},u_{\alpha_2},...,u_{\alpha_N})$ to $(v_{\alpha_1},v_{\alpha_2},...,v_{\alpha_N})$ in $\Kc(X)^N$, if we set
	\[
	u^0 := u, \quad u^n := v, \quad \text{ and } u^j := \max_{1\leq l\leq N} \left( \alpha_l \cdot \chi_{K^l_j} \right) \quad \text{ for each } 1 \leq j \leq n-1,
	\]
	then the sequence $(u^j)_{j=0}^n$ is a $d_{\infty}$-$\delta$-chain from $u$ to $v$. Indeed, using \eqref{eq:fuz{f}(B_infty(u,eps))} together with statement (b) of Proposition~\ref{Pro:Hausdorff} and the fact that $((K^1_j,K^2_j,...,K^N_j))_{j=0}^n$ is a $(d_H)_{(N)}$-$\tfrac{\delta}{2}$-chain we obtain that
	\begin{align*}
		d_{\infty}(\fuz{f}(u^0),u^1) &= d_{\infty}(\fuz{f}(u),u^1) \leq d_{\infty}(\fuz{f}(u),\fuz{f}(u')) + d_{\infty}(\fuz{f}(u'),u^1) \overset{\eqref{eq:fuz{f}(B_infty(u,eps))}}{<} \tfrac{\delta}{2} + \sup_{\alpha\in\II} d_H(\com{f}(u'_{\alpha}),u^1_{\alpha}) \\
		&= \tfrac{\delta}{2} + \max_{1\leq k\leq N} d_H\left( \bigcup_{k \leq l \leq N} \com{f}(u_{\alpha_l}), \bigcup_{k\leq l \leq N} K^l_1 \right) \leq \tfrac{\delta}{2} + \max_{1\leq l\leq N} d_H(\com{f}(u_{\alpha_l}),K^l_1) < \delta.
	\end{align*}
	In addition, given any $1 \leq j \leq n-2$, using again statement (b) of Proposition~\ref{Pro:Hausdorff} and the fact that $((K^1_j,K^2_j,...,K^N_j))_{j=0}^n$ is a $(d_H)_{(N)}$-$\tfrac{\delta}{2}$-chain we have that
	\begin{align*}
		d_{\infty}(\fuz{f}(u^j),u^{j+1}) &= \sup_{\alpha\in\II} d_H(\com{f}(u^j_{\alpha}),u^{j+1}_{\alpha}) = \max_{1\leq k\leq N} d_H\left( \bigcup_{k\leq l \leq N} \com{f}(K^l_j), \bigcup_{k\leq l \leq N} K^l_{j+1} \right) \\
		&\leq \max_{1\leq l\leq N} d_H(\com{f}(K^l_j),K^l_{j+1}) < \tfrac{\delta}{2} < \delta.
	\end{align*}
	And finally, using once more statement (b) of Proposition~\ref{Pro:Hausdorff} and the fact that $((K^1_j,K^2_j,...,K^N_j))_{j=0}^n$ is a $(d_H)_{(N)}$-$\tfrac{\delta}{2}$-chain we get that
	\begin{align*}
		d_{\infty}(\fuz{f}(u^{n-1}),u^n) &= d_{\infty}(\fuz{f}(u^{n-1}),v) \leq d_{\infty}(\fuz{f}(u^{n-1}),v') + d_{\infty}(v',v) < \sup_{\alpha\in\II} d_H(\com{f}(u^{n-1}_{\alpha}),v'_{\alpha}) + \eps\\
		&\leq \max_{1\leq k\leq N} d_H\left( \bigcup_{k\leq l \leq N} \com{f}(K^l_{n-1}) , \bigcup_{k \leq l \leq N} v_{\alpha_l} \right) + \tfrac{\delta}{2} \leq \max_{1\leq l\leq N} d_H(\com{f}(K^l_{n-1}),v_{\alpha_l}) + \tfrac{\delta}{2} < \delta.
	\end{align*}
	Hence, if $(\Kc(X)^N,\com{f}_{(N)})$ is chain recurrent (resp.\ chain transitive or chain mixing) for all $N \in \NN$, considering adequate $(d_H)_{(N)}$-$\tfrac{\delta}{2}$-chains as exposed in the previous reasoning, then one can easily deduce that so is the extended fuzzy dynamical system $(\Fc_{\infty}(X),\fuz{f})$.\newpage
	
	(iv) $\Rightarrow$ (v) $\Rightarrow$ (vi): This trivially follows from the the fact that $d_{S}(u,v) \leq d_{0}(u,v) \leq d_{\infty}(u,v)$ for every pair of fuzzy sets $u,v \in \Fc(X)$. Indeed, note that these inequalities imply that every $d_{\infty}$-$\delta$-chain is a $d_{0}$-$\delta$-chain, and that every $d_{0}$-$\delta$-chain is a $d_{S}$-$\delta$-chain (see Proposition~\ref{Pro:fuzzy.metrics}).
	
	(vi) $\Rightarrow$ (iii): We start observing that: given two (not necessarily distinct) fuzzy sets $u,v \in \Fc(X)$, any $\delta>0$, and any $d_{S}$-$\delta$-chain $(u^j)_{j=0}^n$ from $u$ to $v$, then the sequence of supports $(u^j_0)_{j=0}^n$ is a $d_H$-$\delta$-chain in $\Kc(X)$. Indeed, by the condition of $d_{S}$-$\delta$-chain and Propositions~\ref{Pro:fuz{f}}~and~\ref{Pro:fuzzy.metrics} we have that
	\[
	d_H(\com{f}(u^j_0),u^{j+1}_0) \leq d_{S}(\fuz{f}(u^j),u^{j+1}) < \delta \quad \text{ for all } 0 \leq j \leq n-1.
	\]
	Thus, if the system $(\Fc_{S}(X),\fuz{f})$ is chain recurrent, chain transitive or chain mixing respectively, then so is the system $(\Kc(X),\com{f})$. In fact, given two (not necessarily distinct) compact sets $K,L \in \Kc(X)$, by the previous observation we have that: when there exists a $d_{S}$-$\delta$-chain from $\chi_{K}$ to $\chi_{L}$ and of length $n \in \NN$, then there exists a $d_H$-$\delta$-chain from $K$ to $L$ of the same length $n \in \NN$.
\end{proof}

\begin{remark}
	With Theorem~\ref{The:chains} we have generalized, for the context of dynamical systems acting on non-compact metric spaces, the results \cite[Corollary~3.2]{AyalaDaRo2018_arXiv_dynamics}, \cite[Equivalences~(A2),~(D1),~(D2)~and~(E1)]{FernandezGoodPulRa2015_CSF_chain} and \cite[Propositions~4.1,~4.2,~4.3,~4.5,~4.4~and~4.6,~Theorems~4.7~and~4.8,~and~Corollaries~4.9~and~4.11]{KhanKu2013_FEJDS_recurrence}.
\end{remark}

\subsection{Chain recurrence and the endograph metric}

As we advanced at the Introduction and at the beginning of Section~\ref{Sec_4:chains}, our second main result in this part of the paper is proving that the chain-type properties previously introduced behave in an extremely radical manner for the endograph metric. In particular, our result reads as follows:

\begin{theorem}\label{The:chains.E}
	Let $f:X\longrightarrow X$ be a continuous map on a metric space $(X,d)$. Then, the following statements are equivalent:
	\begin{enumerate}[{\em(i)}]
		\item the map $f:(X,d)\longrightarrow(X,d)$ has dense range;
		
		\item the system $(\Fc_{E}(X),\fuz{f})$ is chain recurrent;
		
		\item the system $(\Fc_{E}(X),\fuz{f})$ is chain transitive;
		
		\item the system $(\Fc_{E}(X),\fuz{f})$ is chain mixing.
	\end{enumerate}
\end{theorem}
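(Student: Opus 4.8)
The plan is to close the cycle of implications (i) $\Rightarrow$ (iv) $\Rightarrow$ (iii) $\Rightarrow$ (ii) $\Rightarrow$ (i). The implications (iv) $\Rightarrow$ (iii) $\Rightarrow$ (ii) are immediate from the definitions: chain mixing yields chain transitivity by selecting a single admissible length, and chain transitivity yields chain recurrence by taking the pair $u=v$. So the real content lies in (ii) $\Rightarrow$ (i) and, above all, in (i) $\Rightarrow$ (iv).

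For (ii) $\Rightarrow$ (i) I would argue by contraposition. If $f$ does not have dense range, pick $x_0 \in X\setminus\cl{f(X)}$ and $\eps\in \ ]0,1]$ with $\Bc_d(x_0,\eps)\cap\cl{f(X)}=\varnothing$. The crucial point is that, by Proposition~\ref{Pro:fuz{f}}, every $w\in\Fc(X)$ satisfies $[\fuz{f}(w)]_{\alpha}=f(w_{\alpha})\subset f(X)$ for $\alpha>0$, so $\fuz{f}(w)$ carries no positive membership on $\Bc_d(x_0,\eps)$. Since $X\times\{0\}\subset\eend(w')$ for every fuzzy set $w'$, a direct $\com{d}$-estimate of the distance from $(x_0,w'(x_0))\in\eend(w')$ to $\eend(\fuz{f}(w))$ (points of the latter at positive level are at spatial distance $\ge\eps$ from $x_0$, while those at level $0$ contribute $\ge w'(x_0)$) gives $d_{E}(\fuz{f}(w),w')\ge\min\{w'(x_0),\eps\}$. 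Hence, for $\delta\le\eps$, every $d_{E}$-$\delta$-chain emanating from $\chi_{\{x_0\}}$ has all later terms with $x_0$-value $<\delta<1$, so none can return to $\chi_{\{x_0\}}$, and $(\Fc_{E}(X),\fuz{f})$ is not chain recurrent.

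The hard direction is (i) $\Rightarrow$ (iv), and here the endograph geometry is essential: dense range of $f$ does \emph{not} make $(X,f)$ itself chain recurrent, so the construction must exploit that $d_{E}\le 1$, that level $0$ of every endograph equals $X$, and that dense range produces approximate preimage towers. For a fixed $\delta>0$ I would isolate three building blocks. First, a \emph{transfer bridge} between single points $a,b\in X$: by dense range choose $b=b_0,b_1,\dots,b_L$ with $d(f(b_{i+1}),b_i)<\delta/3$, and interpolate $w_i:=\max\{s_i\,\chi_{\{f^i(a)\}},\,t_i\,\chi_{\{b_{L-i}\}}\}$ with coefficients $s_i\downarrow 0$, $t_i\uparrow 1$ changing by at most $\delta/3$ per step while always keeping a level-$1$ point; one checks $d_{E}(\fuz{f}(w_i),w_{i+1})<\delta$, giving a $d_{E}$-$\delta$-chain from $\chi_{\{a\}}$ to $\chi_{\{b\}}$. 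Second, a \emph{forward deflation} collapsing any $u$ to a single-point set $\chi_{\{p^*\}}$ in $O(1/\delta)$ steps, by repeatedly demoting all membership values except near one retained peak by $\le\delta$ per step (legitimate since lowering levels by $\le\delta$, moving mass by $\le\delta$, and adding mass at levels $\le\delta$ all keep $w'$ inside the $d_{E}$-ball of radius $\delta$ around $\fuz{f}(\cdot)$). Third, the mirror \emph{inflation} building any $v$ up from a single point along approximate preimages of a piecewise-constant approximation of $v$ furnished by Lemma~\ref{Lem:eps.pisos}, again using dense range. Concatenating deflation, bridge and inflation produces a $d_{E}$-$\delta$-chain from $u$ to $v$, i.e.\ chain transitivity; prepending extra $\fuz{f}$-orbit steps (which cost $d_{E}=0$) before the bridge lengthens the chain by exactly one at a time, so every sufficiently large length is realizable, giving chain mixing.

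I expect the main obstacle to be the bookkeeping in (i) $\Rightarrow$ (iv): verifying $d_{E}(\fuz{f}(w_i),w_{i+1})<\delta$ for the multi-spike fuzzy sets arising in the bridge, the deflation and the inflation, since $d_{E}$ is a Hausdorff distance between endographs and one must simultaneously track the spatial displacement induced by $f$ and the $\le\delta$ variations of the membership levels, all while preserving normality (the perpetual presence of a level-$1$ point). The freedom at level $0$ together with the bound $d_{E}\le 1$ are precisely what render the spatial jumps harmless, and the approximate preimage towers provided by dense range are what allow the forward action of $\fuz{f}$ to be absorbed at each step.
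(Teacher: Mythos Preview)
Your proposal is correct. For (ii) $\Rightarrow$ (i) you argue directly by contraposition at the level of $\Fc_E(X)$; the paper instead uses the general fact that chain recurrence forces dense range of the underlying map, applied to $\fuz{f}$, and then descends to $f$ via Lemma~\ref{Lem:dense.range}. These are essentially the same argument.

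For (i) $\Rightarrow$ (iv) the routes genuinely differ. You propose a three-phase modular construction: deflate $u$ to a single spike, bridge between spikes via one-step approximate preimage towers in $X$, then inflate up to $v$ level by level through a piecewise-constant approximation. The paper instead runs a single unified interpolation: having fixed $n\ge 2n_\delta$ with $n_\delta=\lfloor 1/\delta\rfloor+1$, it first invokes Lemma~\ref{Lem:dense.range} once to produce a fuzzy $w$ with $\fuz{f}^n(w)\in\Bc_E(v,\delta-\eps)$, then sets
\[
u^j:=\max\{\chi_{\{f^j(x)\}},\,(1-j\eps)\cdot\fuz{f}^j(u),\,j\eps\cdot\fuz{f}^j(w)\}\quad(0\le j\le n_\delta),
\]
simultaneously deflating $u$ and inflating $w$ while the moving spike $\chi_{\{f^j(x)\}}$ guarantees normality; then retires the spike via $u^j:=\max\{\fuz{f}^j(w),(1-(j-n_\delta)\eps)\chi_{\{f^j(x)\}}\}$; and finally follows the bare orbit $\fuz{f}^j(w)$ to land near $v$. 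The paper's construction is shorter and sidesteps exactly the inflation bookkeeping you flag as the main obstacle, by outsourcing all preimage work to one call to Lemma~\ref{Lem:dense.range}. Your approach, by contrast, never needs that lemma explicitly, building everything from single-step approximate preimages in $X$ and the level-wise decomposition of Lemma~\ref{Lem:eps.pisos}; it is more hands-on but conceptually equivalent, and both rely on the same endograph-specific features (level $0$ equals $X$, free $\le\delta$ vertical shifts, $d_E\le 1$).
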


The rest of the section is devoted to prove Theorem~\ref{The:chains.E}, whose proof relies on Lemma~\ref{Lem:dense.range}.

\begin{lemma}\label{Lem:dense.range}
	Let $f:X\longrightarrow X$ be a continuous map on a metric space $(X,d)$. Then, the following statements are equivalent:
	\begin{enumerate}[{\em(i)}]
		\item the map $f:X\longrightarrow X$ has $d$-dense range, i.e.\ $f(X)$ is dense in $(X,d)$;
		
		\item the map $\com{f}:\Kc(X)\longrightarrow\Kc(X)$ has $d_H$-dense range, i.e.\ $\com{f}(\Kc(X))$ is dense in $(\Kc(X),d_H)$;
		
		\item the map $\fuz{f}:\Fc(X)\longrightarrow\Fc(X)$ has $d_{\infty}$-dense range, i.e.\ $\fuz{f}(\Fc(X))$ is dense in $(\Fc(X),d_{\infty})$;
		
		\item the map $\fuz{f}:\Fc(X)\longrightarrow\Fc(X)$ has $d_{0}$-dense range, i.e.\ $\fuz{f}(\Fc(X))$ is dense in $(\Fc(X),d_{0})$;
		
		\item the map $\fuz{f}:\Fc(X)\longrightarrow\Fc(X)$ has $d_{S}$-dense range, i.e.\ $\fuz{f}(\Fc(X))$ is dense in $(\Fc(X),d_{S})$;
		
		\item the map $\fuz{f}:\Fc(X)\longrightarrow\Fc(X)$ has $d_{E}$-dense range, i.e.\ $\fuz{f}(\Fc(X))$ is dense in $(\Fc(X),d_{E})$.
	\end{enumerate}
\end{lemma}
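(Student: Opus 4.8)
The plan is to prove the cyclic chain of equivalences by exploiting the metric inequalities from Proposition~\ref{Pro:fuzzy.metrics}(a), which give us $\tau_{E} \subset \tau_{S} \subset \tau_{0} \subset \tau_{\infty}$. The core observation is that \emph{dense range is a purely topological property}: a continuous map has dense range if and only if its image meets every nonempty open set, and this is preserved under passing to coarser topologies only in one direction, so one must be careful. First I would establish the implications (iii) $\Rightarrow$ (iv) $\Rightarrow$ (v) $\Rightarrow$ (vi), which are immediate: if $\fuz{f}(\Fc(X))$ is dense for the finer metric $d_{\infty}$, it is automatically dense for each coarser metric $d_{0}, d_{S}, d_{E}$, since any $d_{E}$-ball (say) contains a $d_{\infty}$-ball around the same centre by the inequality $d_{E} \leq d_{\infty}$. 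Thus the nontrivial content lives in the implications (i) $\Rightarrow$ (ii) $\Rightarrow$ (iii) and (vi) $\Rightarrow$ (i).

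For (i) $\Rightarrow$ (ii), I would use Proposition~\ref{Pro:Hausdorff}(a) and the standard fact that the finite subsets of $X$ are dense in $(\Kc(X),d_H)$: given $K = \{x_1,\dots,x_N\} \in \Kc(X)$ and $\eps>0$, density of $f(X)$ lets me pick points $z_1,\dots,z_N \in X$ with $d(f(z_i),x_i)<\eps$, so that $\com{f}(\{z_1,\dots,z_N\}) = \{f(z_1),\dots,f(z_N)\}$ lies within Hausdorff distance $\eps$ of $K$; approximating an arbitrary compact set by a finite one first handles the general case. For (ii) $\Rightarrow$ (iii), I would use Lemma~\ref{Lem:eps.pisos} to approximate an arbitrary $u \in \Fc(X)$ in $d_{\infty}$ by a piecewise-constant fuzzy set $u' = \max_{1\leq l\leq N}(\alpha_l \cdot \chi_{u_{\alpha_l}})$; since the $\alpha$-levels are compact and $\com{f}$ has $d_H$-dense range (applied to the nested levels simultaneously, using Proposition~\ref{Pro:fuz{f}}(a) that $[\fuz{f}(w)]_\alpha = \com{f}(w_\alpha)$), I can find a fuzzy set $w$ whose $\fuz{f}$-image approximates $u'$ levelwise, hence in $d_{\infty}$.

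The genuine obstacle — and where I expect the real work to be — is the reverse implication (vi) $\Rightarrow$ (i): I must deduce $d$-density of $f(X)$ from mere $d_{E}$-density of $\fuz{f}(\Fc(X))$, and the endograph metric is \emph{much} coarser than the others, so naive level-wise arguments fail (indeed the whole point of the paper is that $d_{E}$ collapses many distinctions). The plan here is to argue by contrapositive: suppose $f(X)$ is \emph{not} dense, so there exist $x_0 \in X$ and $r>0$ with $\Bc_d(x_0,r) \cap f(X) = \varnothing$. I would then target the fuzzy set $\chi_{\{x_0\}}$ and show it cannot be $d_{E}$-approximated by any $\fuz{f}(w)$. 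The key is Proposition~\ref{Pro:fuz{f}}(a): every $\alpha$-level of $\fuz{f}(w)$ equals $\com{f}(w_\alpha) = f(w_\alpha) \subset f(X)$, so the entire endograph $\eend(\fuz{f}(w))$ is contained in $f(X) \times \II$, which stays at $\com{d}$-distance at least $\min\{r,?\}$ from the point $(x_0,1) \in \eend(\chi_{\{x_0\}})$ in the first coordinate. The delicate point is ruling out that the $\II$-coordinate slack in $\com{d}$ lets points of $\eend(\fuz{f}(w))$ at low membership level sneak close: since $(x_0,1)$ has membership level $1$, any point $(z,\beta)$ approximating it within $\tfrac{1}{2}$ must have $\beta > \tfrac{1}{2}$ and $z \in f(X)$ with $d(x_0,z)\geq r$, forcing $\com{d}((x_0,1),(z,\beta)) \geq \min\{r,\tfrac{1}{2}\}>0$, a uniform lower bound contradicting density. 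This asymmetric-Hausdorff estimate on the endograph, leveraging the normality condition $u_1 \neq \varnothing$ to pin the membership coordinate at $1$, is the crux I would need to execute carefully.
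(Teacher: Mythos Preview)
Your plan is sound and closes the cycle correctly. The implications (iii) $\Rightarrow$ (iv) $\Rightarrow$ (v) $\Rightarrow$ (vi) and the step (ii) $\Rightarrow$ (iii) via Lemma~\ref{Lem:eps.pisos} match the paper's argument essentially verbatim. Two differences are worth noting. For (i) $\Leftrightarrow$ (ii) the paper argues with Vietoris basic open sets rather than approximating by finite subsets; the two arguments are equivalent in content. The more interesting divergence is the closing implication: the paper proves (vi) $\Rightarrow$ (ii) by applying Lemma~\ref{Lem:key} to $\chi_K$ and reading off a compact preimage at an intermediate level, whereas you prove (vi) $\Rightarrow$ (i) by a direct contrapositive endograph estimate at the point $(x_0,1)$. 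Your route is slightly more elementary in that it avoids Lemma~\ref{Lem:key} entirely, at the cost of landing at (i) rather than (ii); since (i) $\Rightarrow$ (ii) is already in hand this makes no difference to the cycle.

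One small slip to fix when you write it up: the assertion that ``the entire endograph $\eend(\fuz{f}(w))$ is contained in $f(X)\times\II$'' is false as stated, since $X\times\{0\}\subset\eend(\fuz{f}(w))$ always. Your subsequent sentence shows you already see this and handle it correctly --- any $(z,\beta)\in\eend(\fuz{f}(w))$ with $\com{d}((x_0,1),(z,\beta))<\tfrac{1}{2}$ forces $\beta>\tfrac{1}{2}>0$, hence $z\in[\fuz{f}(w)]_\beta\subset f(w_0)\subset f(X)$, and then $d(x_0,z)\geq r$; points with $\beta\leq\tfrac{1}{2}$ are already at $\com{d}$-distance $\geq\tfrac{1}{2}$. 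Just make the case split explicit and drop the inaccurate containment claim.
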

Equivalence (i) $\Leftrightarrow$ (ii) may be folklore, but we include its brief proof for the sake of completeness.
\begin{proof}[Proof of Lemma~\ref{Lem:dense.range}]
	(i) $\Leftrightarrow$ (ii): Assume that $f$ has $d$-dense range and let us prove that $\com{f}(\Kc(X))$ cuts every $d_H$-open set. In fact, since every $d_H$-open set contains a basic Vietoris-open set of the form
	\[
	\Vc(U_1,U_2,...,U_N) = \left\{ K \in \Kc(X) \ ; \ K \subset \bigcup_{j=1}^N U_j \text{ and } K\cap U_j\neq\varnothing \text{ for all } 1\leq j\leq N \right\}
	\]
	for some $N \in \NN$ and $d$-open sets $U_1,U_2,...,U_N \subset X$, picking a point $x_j \in f^{-1}(U_j)$ for each $1 \leq j \leq N$ and setting $L:=\{x_1,x_2,...,x_N\} \in \Kc(X)$ it follows that $\com{f}(L) \in \Vc(U_1,U_2,...,U_N)$.
	
	Conversely, if we assume that the map $\com{f}$ has $d_H$-dense range, then for each arbitrary non-empty $d$-open subset $U \subset X$ we can consider the basic Vietoris-open set $\Vc(U) = \{ K \in \Kc(X) \ ; \ K \subset U \}$ and a compact set $L' \in \Kc(X)$ fulfilling that $\com{f}(L') \in \Vc(U)$. Picking any point $x \in L'$ it follows that $f(x) \in \com{f}(L') \subset U$, which implies that the map $f$ has $d$-dense range by the arbitrariness of $U$.
	
	(ii) $\Rightarrow$ (iii): Assume that $\com{f}$ has $d_H$-dense range, fix any $u \in \Fc(X)$ and any $\eps>0$, and let us prove that $\fuz{f}(\Fc(X))$ cuts the $d_{\infty}$-open ball $\Bc_{\infty}(u,\eps)$. In fact, using Lemma~\ref{Lem:eps.pisos} we can find a finite sequence of numbers $0 = \alpha_0 < \alpha_1 < \alpha_2 < ... < \alpha_N = 1$ such that
	\[
	d_H(u_{\alpha}, u_{\alpha_{l+1}})<\tfrac{\eps}{2} \quad \text { for all } \alpha \in \ ]\alpha_l, \alpha_{l+1}] \text{ and } 0\leq l\leq N-1,
	\]
	but also fulfilling that $d_H(u_0,u_{\alpha_1})<\tfrac{\eps}{2}$. By assumption, there exist compact sets $K_l \in \Kc(X)$ fulfilling that $d_H(u_{\alpha_l},\com{f}(K_l))<\tfrac{\eps}{2}$ for each $1 \leq l \leq N$. Considering the fuzzy set $v := \max_{1\leq l\leq N} \left( \alpha_l \cdot \chi_{K_l} \right)$, we are going to check that $\fuz{f}(v) \in \Bc_{\infty}(u,\eps)$. Indeed, by statement (b) of Proposition~\ref{Pro:Hausdorff} we have that
	\[
	d_H\left(u_{\alpha_l},\com{f}(v_{\alpha_l})\right) = d_H\left( \bigcup_{l \leq j\leq N} u_{\alpha_j}, \bigcup_{l \leq j \leq N} \com{f}(K_j) \right) \leq \max_{1\leq j\leq N} d_H\left( u_{\alpha_j} ,\com{f}(K_j) \right) < \tfrac{\eps}{2}
	\]
	whenever $1\leq l\leq N$. Thus, given any $\alpha \in [0,1]$ we have that
	\begin{align*}
		d_H\left( u_{\alpha} , \left[\fuz{f}(v)\right]_{\alpha} \right) &= d_H\left( u_{\alpha} , \com{f}(v_{\alpha}) \right)
		= \left\{
		\begin{array}{lcc}
			d_H\left( u_{\alpha} , \com{f}(v_{\alpha_1}) \right), & \text{ if } \alpha \in [0,\alpha_1] \hspace{3.05cm} \\[5pt]
			d_H\left( u_{\alpha} , \com{f}(v_{\alpha_{l+1}}) \right), & \text{ if } \alpha \in \ ]\alpha_l,\alpha_{l+1}], 1\leq l\leq N-1
		\end{array}
		\right\} \\[7.5pt]
		&\leq \left\{
		\begin{array}{lcc}
			d_H\left( u_{\alpha} , u_{\alpha_1} \right) + d_H\left( u_{\alpha_1} , \com{f}(v_{\alpha_1}) \right), & \text{ if } \alpha \in [0,\alpha_1] \hspace{3.05cm} \\[5pt]
			d_H\left( u_{\alpha} , u_{\alpha_{l+1}} \right) + d_H\left( u_{\alpha_{l+1}} , \com{f}(v_{\alpha_{l+1}}) \right), & \text{ if } \alpha \in \ ]\alpha_l,\alpha_{l+1}], 1\leq l\leq N-1
		\end{array}
		\right\} \\[7.5pt]
		&< \tfrac{\eps}{2} + \max_{1\leq l\leq N} d_H\left(u_{\alpha_l},\com{f}(v_{\alpha_l})\right) < \tfrac{\eps}{2} +\tfrac{\eps}{2} = \eps,
	\end{align*}
	which finally implies that $d_{\infty}(u,\fuz{f}(v))<\eps$. The arbitrariness of $u \in \Fc(X)$ and $\eps>0$ shows that the map $\fuz{f}$ has $d_{\infty}$-dense range, finishing the proof.
	
	(iii) $\Rightarrow$ (iv) $\Rightarrow$ (v) $\Rightarrow$ (vi): This trivially follows from the inclusions $\tau_{E} \subset \tau_{S} \subset \tau_{0} \subset \tau_{\infty}$.
	
	(vi) $\Rightarrow$ (ii): Assume that $\fuz{f}$ has $d_{E}$-dense range, fix any $K \in \Kc(X)$ and any $\eps>0$, and let us prove that $\com{f}(\Kc(X))$ cuts the $d_H$-open ball $\Bc_H(K,\eps)$. To simplify the proof, without loss of generality we will assume that $0<\eps\leq\tfrac{1}{2}$. By assumption, there exists a fuzzy set $u \in \Fc(X)$ fulfilling that $\fuz{f}(u)$ belongs to the $d_{E}$-open ball $\Bc_{E}(\chi_{K},\eps)$. Letting
	\[
	\delta := d_{E}(\chi_{K},\fuz{f}(u)) < \eps \leq \tfrac{1}{2},
	\]
	and picking any $\alpha \in \ ]\delta,1-\delta]$, it follows from Lemma~\ref{Lem:key} that $d_H(K,\com{f}(u_{\alpha})) \leq \delta < \eps$, so that $u_{\alpha}$ is an element of $\Kc(X)$ fulfilling that $\com{f}(u_{\alpha}) \in \Bc_H(K,\eps)$. The arbitrariness of $K \in \Kc(X)$ and $\eps>0$ shows that $\com{f}$ has $d_H$-dense range, finishing the proof.
\end{proof}

\begin{proof}[Proof of Theorem~\ref{The:chains.E}]
	The implications (iv) $\Rightarrow$ (iii) $\Rightarrow$ (ii) hold by the definition of chain mixing, chain transitivity and chain recurrence. Moreover, since the underlying map of every chain recurrent dynamical system has dense range, statement (ii) implies that $\fuz{f}$ has $d_{E}$-dense range, which in its turn implies that $f$ has $d$-dense range by Lemma~\ref{Lem:dense.range}, i.e.\ (ii) $\Rightarrow$ (i). Thus, to complete the proof we only need to show that (i) $\Rightarrow$ (iv). From now on assume that $f$ has $d$-dense range, note that then $\fuz{f}$ has $d_{E}$-dense range by Lemma~\ref{Lem:dense.range}, and let us prove the following strengthened version of (iv):
	\begin{enumerate}[--]
		\item \textit{for each $\delta>0$ there exists a positive integer $n_{\delta} \in \NN$ such that, given any pair of sets $u,v \in \Fc(X)$ and any integer $n \geq 2n_{\delta}$, then there exists a $d_{E}$-$\delta$-chain from $u$ to $v$ of length $n$ for the map $\fuz{f}$.}
	\end{enumerate}
	To do so fix any $\delta>0$ and let $n_{\delta} := \lfloor\tfrac{1}{\delta}\rfloor+1$, where $\lfloor\tfrac{1}{\delta}\rfloor$ denotes the floor of $\tfrac{1}{\delta}$. In addition, let us fix the positive value $\eps := \tfrac{1}{n_{\delta}}$, which clearly fulfills that $0<\eps<\delta$. Now, given any pair of arbitrary but fixed normal fuzzy sets $u,v \in \Fc(X)$ and any integer $n \geq 2n_{\delta}$ we can:
	\begin{enumerate}[--]
		\item fix a point $x \in u_1$, which exists because $u \in \Fc(X)$ and hence $u_1 = u^{-1}(\{1\}) \neq \varnothing$;
		
		\item and fixing a fuzzy set $w \in \Fc(X)$ fulfilling that $\fuz{f}^n(w) \in \Bc_{E}(v,\delta-\eps)$, which exists since the map $\fuz{f}^n$ has $d_{E}$-dense range because $\fuz{f}$ does by assumption and Lemma~\ref{Lem:dense.range}.
	\end{enumerate}
	Then, consider the finite sequence of functions $(u^j:X\longrightarrow\II)_{j=0}^n$ formed by
	\[
	u^j := \max\{ \chi_{\{f^j(x)\}} , (1-j\eps) \cdot \fuz{f}^j(u) , j\eps \cdot \fuz{f}^j(w) \} \quad \text{ for each } 0\leq j\leq n_{\delta},
	\]
	\[
	u^j := \max\{ \fuz{f}^j(w) , (1-(j-n_{\delta})\eps) \cdot \chi_{\{f^j(x)\}} \} \quad \text{ for each } n_{\delta}+1\leq j\leq 2n_{\delta},
	\]
	and
	\[
	u^j := \fuz{f}^j(w) \quad \text{ for each } 2n_{\delta}+1\leq j\leq n \text{ whenever } n > 2n_{\delta}.
	\]
	We start by noticing that all these functions are normal fuzzy sets: actually, the maximum of finitely many upper-semicontinuous functions is again an upper-semicontinuous function; all of these functions clearly have compact support; and these functions attain the value $1$ by definition. Now, given any integer $0 \leq j \leq n-1$ we will check that $d_{E}(\fuz{f}(u^j),u^{j+1}) = \com{d}_H(\eend(\fuz{f}(u^j),\eend(u^{j+1})) \leq \eps < \delta$. In fact:
	\begin{enumerate}[--]
		\item \textbf{Case 1}: \textit{When $0\leq j\leq n_{\delta}-1$}. In this case the reader can easily check that
		\[
		\fuz{f}(u^j) = \max\{ \chi_{\{f^{j+1}(x)\}} , (1-j\eps) \cdot \fuz{f}^{j+1}(u) , j\eps \cdot \fuz{f}^{j+1}(w) \}.
		\]
		Thus, if for each $\beta \in [0,1]$ we write
		\begin{align*}
			U_{\beta} &:= \{ (t,\alpha) \in X\times\II \ ; \ \beta \cdot [\fuz{f}^{j+1}(u)](t) \geq \alpha \}, \\[5pt]
			W_{\beta} &:= \{ (t,\alpha) \in X\times\II \ ; \ \beta \cdot [\fuz{f}^{j+1}(w)](t) \geq \alpha \},
		\end{align*}
		we than have that
		\[
		\eend(\fuz{f}(u^j)) = \{f^{j+1}(x)\}\times\II \ \cup \ U_{1-j\eps} \ \cup \ W_{j\eps},
		\]
		while
		\[
		\eend(u^{j+1}) = \{f^{j+1}(x)\}\times\II \ \cup \ U_{1-(j+1)\eps} \ \cup \ W_{(j+1)\eps}.
		\]
		Since in the space of closed sets $\Cc(X\times\II)$ with the distance $\com{d}_H$ one can trivially check that
		\[
		U_{1-(j+1)\eps} \subset U_{1-j\eps} \subset U_{1-(j+1)\eps} + \eps \quad \text{ and that } \quad W_{j\eps} \subset W_{(j+1)\eps} \subset W_{j\eps} + \eps,
		\]
		it follows that $\eend(\fuz{f}(u^j)) \subset \eend(u^{j+1}) + \eps$ but also that $\eend(u^{j+1}) \subset \eend(\fuz{f}(u^j)) + \eps$. Hence, using statement (a) of Proposition~\ref{Pro:Hausdorff} we get that $d_{E}(\fuz{f}(u^j),u^{j+1}) = \com{d}_H(\eend(\fuz{f}(u^j),\eend(u^{j+1})) \leq \eps < \delta$.
		
		\item \textbf{Case 2}: \textit{When $n_{\delta} \leq j\leq 2n_{\delta}-1$}. In this case the reader can easily check that
		\[
		\fuz{f}(u^j) = \max\{ \fuz{f}^{j+1}(w) , (1-(j-n_{\delta})\eps) \cdot \chi_{\{f^{j+1}(x)\}} \}.
		\]
		Thus, we have that
		\[
		\eend(\fuz{f}(u^j)) = \eend(\fuz{f}^{j+1}(w)) \ \cup \ \{f^{j+1}(x)\}\times[0,1-(j-n_{\delta})\eps],
		\]
		while
		\[
		\eend(u^{j+1}) = \eend(\fuz{f}^{j+1}(w)) \ \cup \ \{f^{j+1}(x)\}\times[0,1-((j+1)-n_{\delta})\eps].
		\]
		Note that $\eend(u^{j+1}) \subset \eend(\fuz{f}(u^j))$. Moreover, since in the space of closed sets $\Cc(X\times\II)$ with the distance $\com{d}_H$ one can trivially check that
		\[
		\{f^{j+1}(x)\}\times[0,1-(j-n_{\delta})\eps] \subset \{f^{j+1}(x)\}\times[0,1-((j+1)-n_{\delta})\eps] + \eps,
		\]
		it follows that $\eend(\fuz{f}(u^j)) \subset \eend(u^{j+1}) + \eps$. Hence, using statement (a) of Proposition~\ref{Pro:Hausdorff} we obtain that $d_{E}(\fuz{f}(u^j),u^{j+1}) = \com{d}_H(\eend(\fuz{f}(u^j),\eend(u^{j+1})) \leq \eps < \delta$.
		
		\item \textbf{Case 3}: \textit{When $n>2n_{\delta}$ and $2n_{\delta} \leq j < n-1$}. This case is completely trivial since we have the equalities $\fuz{f}(u^j) = \fuz{f}^{j+1}(w) = u^{j+1}$, so that $d_{E}(\fuz{f}(u^j),u^{j+1}) = 0 \leq \eps < \delta$.
	\end{enumerate}
	Since $u^0 = u$ and $u^n = \fuz{f}^n(w)$, we have thus constructed a $d_{E}$-$\delta$-chain $(u^j)_{j=0}^n$ from $u$ to $\fuz{f}^n(w)$ of length $n$ fulfilling that $d_{E}(\fuz{f}(u^{n-1}),u^n) \leq \eps$. Hence, since $d_{E}(\fuz{f}^n(w),v) < \delta-\eps$ we have that
	\[
	d_{E}(\fuz{f}(u^{n-1}),v) \leq d_{E}(\fuz{f}(u^{n-1}),u^n) + d_{E}(u^n,v) \leq \eps + d_{E}(\fuz{f}^n(w),v) < \delta,
	\]
	and replacing $u^n$ with $v$ in $(u^j)_{j=0}^n$ we obtain a $d_{E}$-$\delta$-chain from $u$ to $v$ of length $n$ for the map $\fuz{f}$.
\end{proof}

\begin{remark}
	Let us continue the comparison of ``\textit{chain recurrence}'' with ``\textit{topological recurrence}'', of the notion of~``\textit{chain transitivity}'' with that of ``\textit{topological transitivity}'' and of the property of being ``\textit{chain mixing}'' with that of being ``\textit{topologically mixing}'' mentioned at Subsection~\ref{SubSec_4.1:chains}. Actually, the reader may find interesting to compare Theorems~\ref{The:chains}~and~\ref{The:chains.E}, that we have proved for the respective chain-type properties, with \cite[Theorems~3.1 and 3.3]{Lopez2026_IJFS_topological-I}, where it was showed that the system $(\Kc(X),\com{f})$ is topologically recurrent, topologically transitive or topologically mixing respectively, if and only if so do $(\Fc_{\infty}(X),\fuz{f})$, $(\Fc_{0}(X),\fuz{f})$, $(\Fc_{S}(X),\fuz{f})$ and $(\Fc_{E}(X),\fuz{f})$. These results provide, once again, an example of how the endograph metric can behave in an extremely radical way: while it aligns well with the topological notions, its behaviour changes dramatically for chains and $\delta$-perturbations.
\end{remark}

\section{The shadowing property}\label{Sec_5:shadowing}

In this section we focus on the shadowing and finite shadowing properties. These dynamical notions were considered in \cite{BartollMaPeRo2022_AXI_orbit} for the fuzzy dynamical systems $(\Fc_{\infty}(X),\fuz{f})$ and $(\Fc_{0}(X),\fuz{f})$. However, their main result on shadowing \cite[Theorem~5]{BartollMaPeRo2022_AXI_orbit} is not completely valid. We therefore have divided this section into two parts: first we provide two counterexamples together with a corrected version of the statement of such a result (see Examples~\ref{Exa_1:discrete}~and~\ref{Exa_2:connected}~and~Theorem~\ref{The:shadowing}); and then we prove that $(\Fc_{E}(X),\fuz{f})$ cannot have the finite shadowing property when the map~$f$ has dense range and~$(X,f)$ is not topologically mixing, but that $(\Fc_{E}(X),\fuz{f})$ has the full shadowing property when~$(X,f)$ is contractive and $f^k(X)$ is bounded for some $k \in \NN$ (see Theorems~\ref{The:shadowing.E}~and~\ref{The:contractions}). The reader may observe along this section that, as it happens in Section~\ref{Sec_3:contraexpansive} with the contractive and expansive notions, the metrics $d_{0}$ and $d_{S}$ together with their respective systems~$(\Fc_{0}(X),\fuz{f})$ and $(\Fc_{S}(X),\fuz{f})$ exhibit, for the shadowing property, a behaviour similar to that of $d_{E}$ and its corresponding dynamical system $(\Fc_{E}(X),\fuz{f})$.

\subsection{Definitions, counterexamples and equivalences}\label{SubSec_5.1:equivalences}

Shadowing forms part of the so-called ``orbit tracing properties'' in Topological Dynamics. In fact, this dynamical notion was motivated by questions such as if an approximate trajectory of a dynamical system can be fitted or not by a real trajectory of the system. Thus, let us start by recalling what an~``approximate trajectory'' is. Given a continuous map $f:X\longrightarrow X$ acting on a metric space $(X,d)$, two positive numbers $\eps,\delta>0$, and an infinite sequence $(x_j)_{j=0}^{\infty}$ in $X$, we say that:
\begin{enumerate}[--]
	\item the sequence $(x_j)_{j=0}^{\infty}$ is a {\em $d$-$\delta$-pseudo-trajectory} for the map $f$ if
	\[
	d(f(x_j),x_{j+1})<\delta \quad \text{ for all } 0\leq j< \infty;
	\]
	
	\item the sequence $(x_j)_{j=0}^{\infty}$ is {\em $d$-$\eps$-shadowed} by the $f$-orbit of some point $x \in X$ if
	\[
	d(f^j(x),x_j)<\eps \quad \text{ for all } 0\leq j< \infty.
	\]
\end{enumerate}
A \textit{$\delta$-pseudo-trajectory} can be seen as an infinite \textit{$\delta$-chain} from those considered in Section~\ref{Sec_4:chains}, and we will say that a finite sequence $(x_j)_{j=0}^n$ of points in $X$ is {\em $d$-$\eps$-shadowed} by the $f$-orbit of some point $x \in X$  whenever $d(f^j(x),x_j)<\eps$ for all $0\leq j\leq n$. Following \cite{BartollMaPeRo2022_AXI_orbit,FernandezGood2016_FM_shadowing,GoodMitTho2020_JMAA_preservation} we will say that:
\begin{enumerate}[--]
	\item a dynamical system $(X,f)$ on a metric space $(X,d)$ has the (resp.\ {\em finite}) {\em shadowing property} if for any $\eps>0$ there exists some $\delta_{\eps}>0$ such that each $d$-$\delta_{\eps}$-pseudo-trajectory (resp.\ $d$-$\delta_{\eps}$-chain) for the map $f$ is $d$-$\eps$-shadowed by the $f$-orbit of some point in $X$.
\end{enumerate}
The reader should note that every dynamical system with the ``full'' shadowing property has the finite shadowing property. The converse does not hold in general (see \cite[Section~2]{BernardesPe2025_AdvM_on}), but it does for compact metric spaces (see \cite[Remark~1]{BarwellGoodORai2013_DCDS_characterizations}). Moreover, it was proved in~\cite[Theorem~3.4]{FernandezGood2016_FM_shadowing} that \textit{when $f:X\longrightarrow X$ is a continuous map on a \textbf{compact} metric space $(X,d)$, then the system $(X,f)$ has the shadowing property if and only if so does $(\Kc(X),\com{f})$}. If the \textbf{compactness} assumption is dropped, then one still gets the equivalence for the {\em finite shadowing property}. In fact, the proof of \cite[Theorem~3.4]{FernandezGood2016_FM_shadowing} follows from three results: first we have \cite[Lemma~3.1]{FernandezGood2016_FM_shadowing}, where compactness can be dropped by properly using continuity rather than uniform continuity; and then we have \cite[Theorems~3.2~and~3.3]{FernandezGood2016_FM_shadowing}, which remain valid exactly with the same proof when ``shadowing'' is replaced with ``finite shadowing''.

For the fuzzy extension $(\Fc(X),\fuz{f})$, it was claimed in \cite[Theorem~5]{BartollMaPeRo2022_AXI_orbit} that $(X,f)$ and $(\Kc(X),\com{f})$ have the finite shadowing property if and only if the systems $(\Fc_{\infty}(X),\fuz{f})$ and $(\Fc_{0}(X),\fuz{f})$ do as well. As we shall reformulate in Theorem~\ref{The:shadowing} below, this assertion is only partially correct: while the equivalence holds for $(\Fc_{\infty}(X),\fuz{f})$, it fails for $(\Fc_{0}(X),\fuz{f})$, as the following counterexamples exhibits.

\begin{example}[\textbf{A discrete counterexample}]\label{Exa_1:discrete}
	\textit{There exists a system $(X,f)$ having the shadowing property for which $(\Fc_{0}(X),\fuz{f})$, $(\Fc_{S}(X),\fuz{f})$ and $(\Fc_{E}(X),\fuz{f})$ do not have the finite shadowing property.}
	\begin{proof}
		Let $X=\{a,b\}$ with the discrete metric described in Example~\ref{Exa:expanding}, and let $f:X\longrightarrow X$ be the identity map, that is, $f(a)=a$ and $f(b)=b$. It is clear that $(X,f)$ has the shadowing property by taking $\delta_{\eps}:=\eps$ for any $\eps>0$. Let us now prove that there is an $\eps_0>0$ such that for any positive value $\delta>0$ there exists a $d_{0}$-$\delta$-chain that is not $d_{E}$-$\eps_0$-shadowed. This fact will be enough to conclude the counterexample: actually, since the inequalities $d_{E}(u,v) \leq d_{S}(u,v) \leq d_{0}(u,v)$ hold for every pair $u,v \in \Fc(X)$ as stated in Proposition~\ref{Pro:fuzzy.metrics}, we then have that every $d_{0}$-$\delta$-chain is also a $d_{S}$-$\delta$-chain and hence a $d_{E}$-$\delta$-chain, but also if a finite sequence cannot be $d_{E}$-$\eps_0$-shadowed by an $\fuz{f}$-orbit in $\Fc(X)$ then such a finite sequence cannot be $d_{S}$-$\eps_0$-shadowed, nor $d_{0}$-$\eps_0$-shadowed, by an $\fuz{f}$-orbit in $\Fc(X)$.
		
		We start by fixing any positive value $0<\eps_0<\tfrac{1}{4}$ and any $\delta>0$. Now, let $k \in 2\NN$ be an even integer fulfilling that $\tfrac{1}{k}<\min\{\delta,\tfrac{1}{4}-\eps_0\}$, let $n=\tfrac{k}{2}-1$ and consider the sequence $(u^j)_{j=0}^n$ of length $n$ with
		\[
		u^j := \chi_{\{a\}} + (\tfrac{1}{2} + \tfrac{j}{k}) \chi_{\{b\}} \quad \text{ for each } 0\leq j \leq n.
		\]
		Note that we have the equalities $u^0 = \chi_{\{a\}} + \tfrac{1}{2} \chi_{\{b\}}$ and $u^{n} = \chi_{\{a\}} + (1-\tfrac{1}{k}) \chi_{\{b\}}$. Let us now check:
		\begin{enumerate}[--]
			\item \textbf{Fact 1}: \textit{The sequence $(u^j)_{j=0}^n$ is a $d_{0}$-$\delta$-chain from $u^0$ to $u^n$}. Since $\fuz{f}$ we is the identity map in~$\Fc(X)$, we only have to check that $d_{0}(u^j,u^{j+1})<\delta$ for all $0\leq j \leq n-1$. We argue as in Lemma~\ref{Lem:contraexpansive}: fix any $0 \leq j \leq n-1$ and consider the 2-piecewise-linear map
			\begin{equation}\label{eq:xi_j}
				\xi_j(\alpha) :=
				\begin{cases}
					\tfrac{k+2j}{k+2(j+1)}\alpha & \text{ if } 0\leq \alpha\leq \tfrac{1}{2}+\tfrac{j+1}{k},\\[5pt]
					\tfrac{k-2j}{k-2(j+1)}\alpha - \tfrac{2}{k-2(j+1)} & \text{ if } \tfrac{1}{2}+\tfrac{j+1}{k}< \alpha\leq 1,
				\end{cases}
			\end{equation}
			which fulfills that $\xi_j(0)=0$, that $\xi_j(\tfrac{1}{2}+\tfrac{j+1}{k})=\tfrac{1}{2}+\tfrac{j}{k}$ and that $\xi_j(1)=1$. It is not hard to see that
			\begin{equation}\label{eq:xi_j=1/k}
				\sup_{\alpha\in\II} |\xi_j(\alpha)-\alpha| = \left|\xi_j\left(\tfrac{1}{2}+\tfrac{j+1}{k}\right) - \left(\tfrac{1}{2}+\tfrac{j+1}{k}\right)\right| = \left|\left(\tfrac{1}{2}+\tfrac{j}{k}\right) - \left(\tfrac{1}{2}+\tfrac{j+1}{k}\right)\right| = \tfrac{1}{k},
			\end{equation}
			and that $\xi_k^{-1}(\tfrac{1}{2}+\tfrac{j}{k}) = \tfrac{1}{2}+\tfrac{j+1}{k}$. It follows that
			\[
			\left[ \xi_j \circ u^{j+1} \right]_{\alpha} = \left[ u^{j+1} \right]_{\xi_j^{-1}(\alpha)} =
			\begin{cases}
				\{a,b\} & \text{ if } 0\leq \alpha\leq \tfrac{1}{2}+\tfrac{j}{k},\\[5pt]
				\{a\} & \text{ if } \tfrac{1}{2}+\tfrac{j}{k}<\alpha\leq 1.
			\end{cases}
			\]
			This implies that $u^j = \xi_j \circ u^{j+1}$ and hence $d_{\infty}(u^j,\xi_j \circ u^{j+1})=0$. We finally deduce that
			\[
			d_{0}(u^j,u^{j+1}) \leq \max\left\{ \sup_{\alpha\in\II} |\xi_j(\alpha)-\alpha| \ , \ d_{\infty}(u^j,\xi_j \circ u^{j+1}) \right\} \overset{\eqref{eq:xi_j=1/k}}{=} \tfrac{1}{k} < \delta.
			\]
			
			\item \textbf{Fact 2}: \textit{The $d_{0}$-$\delta$-chain $(u^j)_{j=0}^n$ is not $d_{E}$-$\eps_0$-shadowed by any $\fuz{f}$-orbit in $\Fc(X)$}. By contradiction, assume that such a sequence is $d_{E}$-$\eps_0$-shadowed by the orbit of some $u \in \Fc(X)$, which means that
			\[
			d_{E}(u,u^j) = d_{E}(\fuz{f}^j(u),u^j) < \eps_0 \quad \text{ for all } 0 \leq j \leq n,
			\]
			because $\fuz{f}$ is the identity map. Hence, from the fact that $\com{d}_H(\eend(u),\eend(u^0)) = d_{E}(u,u^0) < \eps_0 < \tfrac{1}{4}$ we have that $\eend(u) \subset \eend(u^0) + \eps_0 \subset \eend(u^0) + \tfrac{1}{4}$, where
			\[
			\eend(u^0) + \tfrac{1}{4} = \{a\}\times[0,1] \ \cup \ \{b\}\times[0,\tfrac{3}{4}].
			\]
			This necessarily implies that $u(a)=1$ and that $u(b) \in [0,\tfrac{3}{4}]$. Hence, although we also have that $\com{d}_H(\eend(u),\eend(u^n)) = d_{E}(u,u^n) < \eps_0$, we can now check that $\eend(u^n) \not\subset \eend(u) + \eps_0$, reaching a contradiction since then $\com{d}_H(\eend(u),\eend(u^n))>\eps_0$ by statement~(a) of Proposition~\ref{Pro:Hausdorff}. In fact, note that the point $(b,1-\tfrac{1}{k}) \in \eend(u^n)$ fulfills that
			\[
			\inf_{(x,\alpha) \in \eend(u)} \com{d}( (b,1-\tfrac{1}{k}) , (x,\alpha) ) \geq \inf_{(x,\alpha) \in \eend(u^0) + \tfrac{1}{4}} \max\{ d(b,x) , |(1-\tfrac{1}{k})-\alpha| \},
			\]
			and for each $(x,\alpha) \in \eend(u^0) + \tfrac{1}{4}$ we have that
			\begin{align*}
				\max\{ d(b,x) , |(1-\tfrac{1}{k})-\alpha| \} &= \left\{
				\begin{array}{ll}
					\max\{ d(b,a) , |(1-\tfrac{1}{k})-\alpha| \} & \text{ if } 0\leq\alpha\leq 1 \text{ and } x=a, \\[5pt]
					\max\{ d(b,b) , |(1-\tfrac{1}{k})-\alpha| \} & \text{ if } 0\leq\alpha\leq \tfrac{3}{4} \text{ and } x=b,
				\end{array}
				\right\} \\[7.5pt]
				&\geq \left\{
				\begin{array}{ll}
					1 & \text{ if } 0\leq\alpha\leq 1 \text{ and } x=a, \\[5pt]
					|(1-\tfrac{1}{k})-\tfrac{3}{4}| & \text{ if } 0\leq\alpha\leq \tfrac{3}{4} \text{ and } x=b,
				\end{array}
				\right\} \geq \tfrac{1}{4}-\tfrac{1}{k} > \eps_0.\qedhere
			\end{align*}
		\end{enumerate}
	\end{proof}
\end{example}

The idea behind Example~\ref{Exa_1:discrete}, and the reason why we have taken the identity map, was to keep the arguments as simple as possible so that the reader can easily see why the shadowing property fails for the metrics $d_{0}$, $d_{S}$ and $d_{E}$. In fact, the construction of the system in Example~\ref{Exa_1:discrete} admits several variations: one may take $f:\{a,b\}\longrightarrow\{a,b\}$ with $f(a)=b$ and $f(b)=a$, but one could also consider a discrete space with $N \geq 3$ points, $X=\{a_1,a_2,...,a_N\}$, together with the map $f(a_j)=a_{j+1 \ (\text{mod } N)}$, among other plausible choices. In all these cases the same arguments apply. Since the reader might suspect that this phenomenon relies on the discreteness of the underlying metric space, we also provide a counterexample on a connected space, where completely analogous arguments hold.

\begin{example}[\textbf{A connected counterexample}]\label{Exa_2:connected}
	\textit{There exists a contractive dynamical system $(X,f)$ for which $(\Fc_{0}(X),\fuz{f})$, $(\Fc_{S}(X),\fuz{f})$ and $(\Fc_{E}(X),\fuz{f})$ do not have the finite shadowing property.}
	\begin{proof}
		Consider the set of real numbers $X=\RR$ with the usual metric $d(x,y)=|x-y|$ for each pair of points $x,y \in \RR$. Let $f:X\longrightarrow X$ be the contractive linear map with $f(x)=\tfrac{1}{2}x$ for each $x \in \RR$. It is a classical fact that the dynamical system $(X,f)$ has the shadowing property. One way to see this is by invoking the Linear Shadowing Lemma, which states that every hyperbolic invertible operator on a Banach space has the shadowing property (see \cite{Ombach1994_UIAM_the-shadowing}). Another approach is noticing that every contractive dynamical system has the shadowing property (see Proposition~\ref{Pro:contractions}). As in Example~\ref{Exa_1:discrete} we are going to prove that there is an $\eps_0>0$ such that for any $\delta>0$ there exists a $d_{0}$-$\delta$-chain that is not $d_{E}$-$\eps_0$-shadowed, which will complete the counterexample. To do so, we again fix any positive value $0<\eps_0<\tfrac{1}{4}$ and any $\delta>0$. Now, let $k \in 2\NN$ be an even integer fulfilling that $\tfrac{1}{k}<\min\{\delta,\tfrac{1}{4}-\eps_0\}$, let $n=\tfrac{k}{2}-1$ and consider the sequence $(u^j)_{j=0}^n$ of length $n$ with
		\[
		u^j := \chi_{\{0\}} + (\tfrac{1}{2} + \tfrac{j}{k}) \chi_{\{2^{n-j}\}} \quad \text{ for each } 0\leq j \leq n.
		\]
		It can be checked that we have the equalities $u^0 = \chi_{\{0\}} + \tfrac{1}{2} \chi_{\{2^n\}}$, $\fuz{f}(u^j) = \chi_{\{0\}} + (\tfrac{1}{2} + \tfrac{j}{k}) \chi_{\{2^{n-(j+1)}\}}$ for each $0\leq j \leq n-1$, and $u^{n} = \chi_{\{0\}} + (1-\tfrac{1}{k}) \chi_{\{1\}}$. Let us now check:
		\begin{enumerate}[--]
			\item \textbf{Fact 1}: \textit{The sequence $(u^j)_{j=0}^n$ is a $d_{0}$-$\delta$-chain from $u^0$ to $u^n$}. Actually, arguing as in Example~\ref{Exa_1:discrete}, fix any $0 \leq j \leq n-1$ and consider the 2-piecewise-linear map $\xi_j$ as defined in \eqref{eq:xi_j}. It follows that
			\[
			\left[ \xi_j \circ u^{j+1} \right]_{\alpha} = \left[ u^{j+1} \right]_{\xi_j^{-1}(\alpha)} =
			\begin{cases}
				\{0,2^{n-(j+1)}\} & \text{ if } 0\leq \alpha\leq \tfrac{1}{2}+\tfrac{j}{k},\\[5pt]
				\{0\} & \text{ if } \tfrac{1}{2}+\tfrac{j}{k}<\alpha\leq 1.
			\end{cases}
			\]
			Thus, $\fuz{f}(u^j) = \xi_j \circ u^{j+1}$ so that $d_{0}(\fuz{f}(u^j),u^{j+1}) \leq \tfrac{1}{k} < \delta$ by \eqref{eq:xi_j=1/k}, exactly as in Example~\ref{Exa_1:discrete}.
			
			\item \textbf{Fact 2}: \textit{The $d_{0}$-$\delta$-chain $(u^j)_{j=0}^n$ is not $d_{E}$-$\eps_0$-shadowed by any $\fuz{f}$-orbit in $\Fc(X)$}. By contradiction, assume that there exists some $u \in \Fc(X)$ fulfilling that $d_{E}(\fuz{f}^j(u),u^j)<\eps_0$ for all $0 \leq j \leq n$. Hence, since $\com{d}_H(\eend(u),\eend(u^0)) = d_{E}(u,u^0) < \eps_0 < \tfrac{1}{4}$ we have that
			\[
			\eend(u) \subset \eend(u^0) + \tfrac{1}{4} = [-\tfrac{1}{4},\tfrac{1}{4}]\times[0,1] \ \cup \ [2^n-\tfrac{1}{4},2^n+\tfrac{1}{4}]\times[0,\tfrac{3}{4}] \ \cup \ \RR\times[0,\tfrac{1}{4}].
			\]
			Applying $n$ times the map $\fuz{f}$ it is not hard to check that then
			\begin{equation}\label{eq:end(f^n(u)).included}
				\eend(\fuz{f}^n(u)) \subset [-\tfrac{1}{4\cdot2^n},\tfrac{1}{4\cdot2^n}]\times[0,1] \ \cup \ [1-\tfrac{1}{4\cdot2^n},1+\tfrac{1}{4\cdot2^n}]\times[0,\tfrac{3}{4}] \ \cup \ \RR\times[0,\tfrac{1}{4}].
			\end{equation}
			Hence, although we also have that $\com{d}_H(\eend(\fuz{f}^n(u)),\eend(u^n)) = d_{E}(\fuz{f}^n(u),u^n) < \eps_0$, we can now check that $\eend(u^n) \not\subset \eend(\fuz{f}^n(u)) + \eps_0$, reaching a contradiction with statement~(a) of Proposition~\ref{Pro:Hausdorff}. In fact, using that $\eps_0<\tfrac{1}{4}-\tfrac{1}{k}<\tfrac{1}{4}$ by assumption and hence that $\tfrac{1}{4\cdot2^n}+\eps_0<\tfrac{1}{2}$, the point $(1,1-\tfrac{1}{k})$ belongs to the set $\eend(u^n)$ while by \eqref{eq:end(f^n(u)).included} we have that $(1,1-\tfrac{1}{k}) \notin \eend(\fuz{f}^n(u))$ because
			\[
			\eend(\fuz{f}^n(u)) + \eps_0 \subset [-\tfrac{1}{2},\tfrac{1}{2}]\times[0,1] \ \cup \ [\tfrac{1}{2},\tfrac{3}{2}]\times[0,1-\tfrac{1}{k}[ \ \cup \ \RR\times[0,\tfrac{1}{2}].\qedhere
			\]
		\end{enumerate}
	\end{proof}
\end{example}

In Example~\ref{Exa_2:connected} we could equally have considered the expansive map $f(x)=2x$ on the same metric space~$(\RR,d)$, which also provides a counterexample where similar arguments apply (this system also has the shadowing property by the Linear Shadowing Lemma~\cite{Ombach1994_UIAM_the-shadowing}). Nevertheless, we chose to work with the contractive map $f(x)=\tfrac{1}{2}x$, since contractions will play a central role in Subsection~\ref{SubSec_5.2:contractions} below. Having established the previous counterexamples we are now in a position to adapt the ideas originally employed in \cite[Theorem~5]{BartollMaPeRo2022_AXI_orbit}, and we can formulate the next corrected version of it:

\begin{theorem}\label{The:shadowing}
	Let $f:X\longrightarrow X$ be a continuous map on a metric space $(X,d)$. Hence:
	\begin{enumerate}[{\em(a)}]
		\item The following statements are equivalent:
		\begin{enumerate}[{\em(i)}]
			\item $(X,f)$ has the finite shadowing property;
			
			\item $(\Kc(X),\com{f})$ has the finite shadowing property;
			
			\item $(\Fc_{\infty}(X),\fuz{f})$ has the finite shadowing property.
		\end{enumerate}
		Moreover, if $(X,d)$ is a compact metric space, then statements {\em(i)} and {\em(ii)} can be replaced by:
		\begin{enumerate}[{\em(i)}]
			\item[{\em(i')}] $(X,f)$ has the shadowing property;
			
			\item[{\em(ii')}] $(\Kc(X),\com{f})$ has the shadowing property.
		\end{enumerate}
		
		\item If the system $(\Kc(X),\com{f})$ has the shadowing property, the so does the system $(X,f)$.
		
		\item If any of the systems $(\Fc_{\infty}(X),\fuz{f})$, $(\Fc_{0}(X),\fuz{f})$, $(\Fc_{S}(X),\fuz{f})$ or $(\Fc_{E}(X),\fuz{f})$ has the (resp.\ finite) shadowing property, then so does the system $(\Kc(X),\com{f})$.
	\end{enumerate}
\end{theorem}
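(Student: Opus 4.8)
The plan is to transfer the (finite) shadowing property from $(\Fc(X),\rho)$ down to $(\Kc(X),\com{f})$ through the embedding $K\mapsto\chi_K$ of $\Kc(X)$ into $\Fc(X)$. Two structural facts make this work uniformly in $\rho\in\{d_{\infty},d_{0},d_{S},d_{E}\}$: the embedding intertwines the two dynamics, since $\fuz{f}(\chi_K)=\chi_{\com{f}(K)}$ by Proposition~\ref{Pro:fuz{f}}(c); and it does not increase distances, since Proposition~\ref{Pro:fuzzy.metrics}(d) gives $\rho(\chi_K,\chi_L)\leq d_H(K,L)$ for all four metrics (with equality except possibly for $d_{E}$, where the right-hand side is truncated at $1$). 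The recovery of a compact shadowing set from the fuzzy one will then rest on Lemma~\ref{Lem:key}.

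First I would fix $\eps>0$ and set $\eps':=\min\{\eps,\tfrac{1}{3}\}$, so that $\eps'\leq\eps$ and $\eps'<\tfrac{1}{2}$; let $\delta_{\eps}>0$ be the constant given by the (finite) $\rho$-shadowing property of $(\Fc(X),\rho)$ for the error $\eps'$. Given any $d_H$-$\delta_{\eps}$-pseudo-trajectory (resp.\ $d_H$-$\delta_{\eps}$-chain) $(K_j)_j$ for $\com{f}$ in $\Kc(X)$, the two facts above yield, for every index $j$,
\[
\rho(\fuz{f}(\chi_{K_j}),\chi_{K_{j+1}}) = \rho(\chi_{\com{f}(K_j)},\chi_{K_{j+1}}) \leq d_H(\com{f}(K_j),K_{j+1}) < \delta_{\eps},
\]
so $(\chi_{K_j})_j$ is a $\rho$-$\delta_{\eps}$-pseudo-trajectory (resp.\ $\rho$-$\delta_{\eps}$-chain) for $\fuz{f}$ in $\Fc(X)$. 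Applying the hypothesis, there exists $u\in\Fc(X)$ whose $\fuz{f}$-orbit $\rho$-$\eps'$-shadows it, i.e.\ $\rho(\fuz{f}^j(u),\chi_{K_j})<\eps'$ for all $j$.

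The final step extracts a genuine compact shadowing set from $u$, and here the key point is that $d_{E}$ is the \emph{smallest} of the four metrics. Writing $v_j:=\fuz{f}^j(u)$, the inequality $d_{E}\leq\rho$ from Proposition~\ref{Pro:fuzzy.metrics}(a) gives $\delta_j:=d_{E}(\chi_{K_j},v_j)\leq\rho(v_j,\chi_{K_j})<\eps'<\tfrac{1}{2}$, so Lemma~\ref{Lem:key} applies and yields $d_H(K_j,(v_j)_{\alpha})\leq\delta_j$ for every $\alpha\in\ ]\delta_j,1-\delta_j]$. Since $\delta_j<\tfrac{1}{2}$ for every $j$, the level $\alpha=\tfrac{1}{2}$ lies in all of these intervals at once. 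I would then set $K:=u_{1/2}\in\Kc(X)$, which is non-empty because $u_1\subset u_{1/2}$ and compact because it is a closed subset of $u_0$, and use Proposition~\ref{Pro:fuz{f}}(a)~and~(b) to identify $\com{f}^j(K)=\com{f}^j(u_{1/2})=[\fuz{f}^j(u)]_{1/2}=(v_j)_{1/2}$. Combining, $d_H(\com{f}^j(K),K_j)\leq\delta_j<\eps'\leq\eps$ for every $j$, so the $\com{f}$-orbit of $K$ $d_H$-$\eps$-shadows $(K_j)_j$, as required.

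The argument is entirely uniform in $\rho$ and word-for-word identical in the finite and full versions, the only distinction being whether $j$ ranges over a finite set or all of $\NN_0$. The one step demanding care — and the reason a naive ``isometric embedding'' argument does not quite suffice — is that for $\rho=d_{E}$ the map $K\mapsto\chi_K$ is an isometry only up to truncation at $1$, so $d_H$-closeness of the relevant level sets cannot be read off directly; this is precisely what Lemma~\ref{Lem:key} repairs, and it is the sole reason for keeping the shadowing error below $\tfrac{1}{2}$, which guarantees that the fixed level $\tfrac{1}{2}$ remains admissible at every step of the orbit.
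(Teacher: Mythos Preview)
Your argument is correct and follows essentially the same route as the paper's proof of part~(c): embed $(K_j)$ via $K\mapsto\chi_K$, shadow in $\Fc(X)$, then use $d_E\leq\rho$ together with Lemma~\ref{Lem:key} to extract a suitable level set of the shadowing fuzzy set. The only cosmetic differences are that the paper shadows to tolerance $\tfrac{\eps}{2}$ (assuming $\eps\leq\tfrac12$) and then picks any $\alpha\in\ ]\eta,1-\eta]$ for $\eta=\sup_j d_E(\fuz{f}^j(u),\chi_{K_j})$, whereas you shadow to $\eps'=\min\{\eps,\tfrac13\}$ and fix $\alpha=\tfrac12$; both choices work for the same reason. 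Note that you address only part~(c); in the paper, parts~(a) and~(b) are handled by citing \cite{FernandezGood2016_FM_shadowing,BarwellGoodORai2013_DCDS_characterizations,BartollMaPeRo2022_AXI_orbit}, with the implication (iii)~$\Rightarrow$~(ii) of (a) being the special case $\rho=d_\infty$ of~(c).
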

\begin{proof}
	(a): The equivalence (i) $\Leftrightarrow$ (ii) follows from the arguments exhibited in \cite[Theorem~3.4]{FernandezGood2016_FM_shadowing}, even if $(X,d)$ is not a compact space (see the comments at the beginning of Subsection~\ref{SubSec_5.1:equivalences}). For the particular case in which $(X,d)$ is compact, then $(\Kc(X),d_H)$ is compact by \cite[Theorem~3.5]{IllanesNad1999_book_hyperspaces} and the equivalences of statements (i)-(ii) with (i')-(ii') follow from \cite[Remark~1]{BarwellGoodORai2013_DCDS_characterizations}. The implication (ii) $\Rightarrow$ (iii) was proved in \cite[Theorem~5]{BartollMaPeRo2022_AXI_orbit} for the case in which $(X,d)$ is a compact metric space, but their proof does not use compactness. We prove the implication (iii) $\Rightarrow$ (ii) in part (c) of this theorem.
	
	(b): This was proved in \cite[Theorem~3.2]{FernandezGood2016_FM_shadowing} for the case in which $(X,d)$ is compact, but without using compactness. Thus, the proof given there is valid to conclude our statement.
	
	(c): Fix $\rho \in \{d_{\infty},d_{0},d_{S},d_{E}\}$ and assume that $(\Fc(X),\fuz{f})$ has the (resp.\ finite) shadowing property when $\Fc(X)$ is endowed with $\rho$. Thus, given $\eps>0$, which can be assumed to fulfill that $0<\eps\leq\tfrac{1}{2}$ without loss of generality, let $\delta_{\eps}>0$ such that every finite $\rho$-$\delta_{\eps}$-chain for $\fuz{f}$ can be $\rho$-$\tfrac{\eps}{2}$-shadowed by the $\fuz{f}$-orbit of some fuzzy set in $\Fc(X)$. Consider now a $d_H$-$\delta_{\eps}$-pseudo-trajectory (resp.\ $d_H$-$\delta_{\eps}$-chain) of compact sets $(K_j)_{j=0}^{n}$ with $n=\infty$ (resp.\ $n \in \NN$). Letting $u^j:=\chi_{K_j}$ for each possible $j$ we deduce that $(u^j)_{j=0}^n$ is a $\rho$-$\delta_{\eps}$-pseudo-trajectory (resp.\ $\rho$-$\delta_{\eps}$-chain). In fact, by Proposition~\ref{Pro:fuzzy.metrics} for $d_{\infty}$,
	\[
	\rho(\fuz{f}(u^j),u^{j+1}) \leq d_{\infty}(\fuz{f}(u^j),u^{j+1}) = d_H(\com{f}(K_j),K_{j+1})<\delta_{\eps} \quad \text{ for all } 0\leq j< n.
	\]
	By assumption there exists $u \in \Fc(X)$ such that $\rho(\fuz{f}^j(u),u^j) < \tfrac{\eps}{2} < \eps \leq \tfrac{1}{2}$ for all possible $j$. Using again Proposition~\ref{Pro:fuzzy.metrics}, but this time for $d_{E}$, we have that
	\[
	\eta := \sup_{j} d_{E}(\fuz{f}^j(u),u^j) \leq \sup_{j} \rho(\fuz{f}^j(u),u^j) \leq \tfrac{\eps}{2} < \eps \leq \tfrac{1}{2}.
	\]
	Picking now any $\alpha \in \ ]\eta,1-\eta]$, an application of Lemma~\ref{Lem:key} shows that
	\[
	d_H(\com{f}^j(u_{\alpha}),K_j) = d_H( [\fuz{f}^j(u)]_{\alpha} , K_j ) \leq \eta < \eps \quad \text{ for every possible } j.
	\]
	We deduce that $(K_j)_{j=0}^n$ is $d_H$-$\eps$-shadowed by the $\com{f}$-orbit of $u_{\alpha} \in \Kc(X)$, which completes the proof.
\end{proof}

\subsection{Shadowing for the Skorokhod, sendograph and endograph metrics}\label{SubSec_5.2:contractions}

In view of Theorem~\ref{The:contraexpansive} and given that the arguments employed in Lemma~\ref{Lem:contraexpansive} closely parallel those used in~Examples~\ref{Exa_1:discrete} and \ref{Exa_2:connected}, it is natural to ask whether $(\Fc_{0}(X),\fuz{f})$, $(\Fc_{S}(X),\fuz{f})$ or $(\Fc_{E}(X),\fuz{f})$ can exhibit the shadowing property whenever the set~$X$ contains more than one point and the map~$f$ is non-constant. We begin with a negative result for the endograph metric, derived from the theory developed in Section~\ref{Sec_4:chains}. Recall that a system $(X,f)$ is called {\em topologically mixing} if for any pair of non-empty open subsets $U,V \subset X$ there exists some $n_0 \in \NN$ such that $f^n(U)\cup V \neq \varnothing$ for all $n \geq n_0$.

\begin{theorem}\label{The:shadowing.E}
	Let $f:X\longrightarrow X$ be a continuous map on a metric space $(X,d)$ and assume that the map~$f$ has dense range in $(X,d)$. If the dynamical system $(X,f)$ is not topologically mixing, then the extended dynamical system $(\Fc_{E}(X),\fuz{f})$ cannot have the finite shadowing property.
\end{theorem}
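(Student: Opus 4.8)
The plan is to argue by contradiction, playing the chain-mixing machinery of Theorem~\ref{The:chains.E} against the non-mixing hypothesis. Since $f$ has dense range, Theorem~\ref{The:chains.E} tells us that $(\Fc_{E}(X),\fuz{f})$ is chain mixing. Suppose, toward a contradiction, that $(\Fc_{E}(X),\fuz{f})$ also had the finite shadowing property. I would then show that chain mixing together with finite shadowing forces $(X,f)$ itself to be topologically mixing, contradicting the assumption. The underlying principle is the standard one: a finite $\delta$-chain joining two points, once $\eps$-shadowed by a genuine orbit, produces a real orbit segment connecting the $\eps$-neighbourhoods of its endpoints, and doing this for all large lengths yields mixing.

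Concretely, fix non-empty open sets $U,V\subset X$, pick $a\in U$ and $b\in V$, and choose $0<\eps<\tfrac12$ with $\Bc_d(a,\eps)\subset U$ and $\Bc_d(b,\eps)\subset V$. Applying the finite shadowing property to this $\eps$ yields some $\delta_{\eps}>0$, and applying chain mixing to the pair $\chi_{\{a\}},\chi_{\{b\}}\in\Fc(X)$ yields $n_0\in\NN$ such that for every $n\geq n_0$ there is a $d_{E}$-$\delta_{\eps}$-chain from $\chi_{\{a\}}$ to $\chi_{\{b\}}$ of length $n$. Shadowing such a chain by the $\fuz{f}$-orbit of some $w\in\Fc(X)$ gives, simultaneously, $d_{E}(w,\chi_{\{a\}})<\eps$ and $d_{E}(\fuz{f}^n(w),\chi_{\{b\}})<\eps$.

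The crux is then to descend from these two $d_{E}$-estimates to a genuine point of $X$. Since $\eps<\tfrac12$, I would invoke Lemma~\ref{Lem:key} twice: once with $K=\{a\}$ and the fuzzy set $w$, and once with $K=\{b\}$ and the fuzzy set $\fuz{f}^n(w)$. Because both endpoint distances lie strictly below $\eps<\tfrac12$, the level $\alpha=\tfrac12$ falls inside the admissible interval $\,]\delta,1-\delta]\,$ of Lemma~\ref{Lem:key} in both applications, giving $d_H(\{a\},w_{1/2})<\eps$ and $d_H(\{b\},[\fuz{f}^n(w)]_{1/2})<\eps$. By Proposition~\ref{Pro:fuz{f}} we have $[\fuz{f}^n(w)]_{1/2}=f^n(w_{1/2})$, so choosing any $p\in w_{1/2}$ (a non-empty set, as $w$ is normal) yields $p\in\Bc_d(a,\eps)\subset U$ and $f^n(p)\in\Bc_d(b,\eps)\subset V$. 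Hence $f^n(U)\cap V\neq\varnothing$ for all $n\geq n_0$; as $U,V$ were arbitrary, $(X,f)$ is topologically mixing, the desired contradiction.

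The only place the specific endograph structure enters, and the main point requiring care, is this last descent: I must make the level-set control uniform in $n$, so that one single level $\alpha$ serves at both ends of every orbit segment. This is exactly what the choice $\eps<\tfrac12$ secures, since it keeps $\alpha=\tfrac12$ inside $\,]\delta,1-\delta]\,$ regardless of which $n\geq n_0$ is taken and regardless of the precise values of the two endpoint distances. The combinatorial half of the argument (chain mixing combined with finite shadowing) is routine; the genuine work lies in selecting $\eps$ appropriately and verifying that the hypotheses of Lemma~\ref{Lem:key} hold for both $w$ and $\fuz{f}^n(w)$.
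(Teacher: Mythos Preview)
Your argument is correct and follows the same contrapositive strategy as the paper: use Theorem~\ref{The:chains.E} to obtain chain mixing of $(\Fc_{E}(X),\fuz{f})$, then combine it with the assumed finite shadowing to force topological mixing, contradicting the hypothesis. The only difference is \emph{where} you draw the mixing conclusion. The paper proves topological mixing of $(\Fc_{E}(X),\fuz{f})$ itself---it works with arbitrary $u,v\in\Fc(X)$ and $\eps>0$, shadows $d_{E}$-$\delta_{\eps}$-chains from $u$ to $v$ to land orbit points in $\Bc_{E}(u,\eps)$ and $\Bc_{E}(v,\eps)$---and then invokes \cite[Theorem~3.1]{Lopez2025_arXiv_topological-I} to transfer mixing down to $(X,f)$. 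You instead prove mixing of $(X,f)$ directly: you restrict to characteristic functions $\chi_{\{a\}},\chi_{\{b\}}$ and use Lemma~\ref{Lem:key} at level $\alpha=\tfrac12$ to pass from the shadowing fuzzy set $w$ to a genuine point $p\in w_{1/2}$. Your route is more self-contained (it avoids the external reference), at the cost of the extra care with $\eps<\tfrac12$ and the level-set descent; the paper's route is shorter but leans on the companion paper.
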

\begin{proof}
	Assume that $(\Fc_{E}(X),\fuz{f})$ has the finite shadowing property and let us prove that then $(X,f)$ is topologically mixing. By \cite[Theorem~3.1]{Lopez2026_IJFS_topological-I} this is equivalent to prove that $(\Fc_{E}(X),\fuz{f})$ is topologically mixing. Moreover, since every $d_{E}$-open set contains an open $d_{E}$-ball, it is enough to prove that for any pair $u,v \in \Fc(X)$ and any $\eps>0$ there exists $n_0 \in \NN$ fulfilling that
	\[
	\fuz{f}^n(\Bc_{E}(u,\eps)) \cap \Bc_{E}(v,\eps) \neq \varnothing \quad \text{ for all } n\geq n_0.
	\]
	Actually, let $\delta_{\eps}>0$ so that every $d_{E}$-$\delta_{\eps}$-chain is $d_{E}$-$\eps$-shadowed by some $\fuz{f}$-orbit in $\Fc(X)$. Since the map $f:(X,d)\longrightarrow(X,d)$ has dense range, Theorem~\ref{The:chains.E} shows that $(\Fc_{E}(X),\fuz{f})$ is chain mixing. Thus, there is a positive integer $n_0 \in \NN$ such that for every $n \geq n_0$ there exists a $d_{E}$-$\delta$-chain $(u^{j,n})_{j=0}^n$ from $u$ to $v$ of length $n$. By assumption, for each $n \geq n_0$ there exists a fuzzy set $w^n \in \Fc(X)$ fulfilling that
	\[
	d_{E}(\fuz{f}^j(w^n),u^{j,n})<\eps \quad \text{ for all } 0\leq j\leq n.
	\]
	Thus, $d_{E}(w^n,u)=d_{E}(\fuz{f}^0(w^n),u^{0,n})<\eps$, $d_{E}(\fuz{f}^n(w^n),v)=d_{E}(\fuz{f}^n(w^n),u^{n,n})<\eps$ and hence
	\[
	\fuz{f}^n(w^n) \in \fuz{f}^n(\Bc_{E}(u,\eps)) \cap \Bc_{E}(v,\eps) \quad \text{ for each } n\geq n_0.\qedhere
	\]
\end{proof}

We can use Theorem~\ref{The:shadowing.E} to reprove that $(\Fc_{E}(X),\fuz{f})$ does not have the finite shadowing property for the systems $(X,f)$ considered in Examples~\ref{Exa_1:discrete}~and~\ref{Exa_2:connected}, but also for the expansive map $f(x)=2x$ defined in $X=\RR$ with the usual $|\cdot|$-metric: the maps of these systems have dense range but they are not topologically mixing. However, the systems $(\Fc_{0}(X),\fuz{f})$, $(\Fc_{S}(X),\fuz{f})$ and $(\Fc_{E}(X),\fuz{f})$ will have the shadowing property whenever $(X,f)$ is ``contractive enough'' (see Theorem~\ref{The:contractions} below). We start by recalling why contractions have the shadowing property:

\begin{proposition}\label{Pro:contractions}
	If a dynamical system $(X,f)$, or any of its extensions $(\Kc(X),\com{f})$ or $(\Fc_{\infty}(X),\fuz{f})$, is contractive, then $(X,f)$, $(\Kc(X),\com{f})$ and $(\Fc_{\infty}(X),\fuz{f})$ have the shadowing property.
\end{proposition}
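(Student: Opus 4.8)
The plan is to reduce everything to a single abstract statement: \emph{every contractive dynamical system, acting on an arbitrary metric space, has the shadowing property}. This reduction is immediate from the equivalences recalled at the beginning of Section~\ref{Sec_3:contraexpansive} (see \cite[Section~III.7]{Barnsley1988_book_fractals} and \cite[Proposition~5.7]{JardonSanSan2020_FSS_some}): if any one of the systems $(X,f)$, $(\Kc(X),\com{f})$ or $(\Fc_{\infty}(X),\fuz{f})$ is contractive, then all three are contractive, each with its own constant in $[0,1[$. Since $(\Kc(X),d_H)$ and $\Fc_{\infty}(X)$ are themselves metric spaces on which $\com{f}$ and $\fuz{f}$ act contractively, it then suffices to prove the abstract statement and to apply it separately to each of the three systems.

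To prove the abstract statement I would take a dynamical system $(Y,g)$ on a metric space $(Y,\varrho)$ that is contractive with some constant $\lambda \in [0,1[$, i.e.\ $\varrho(g(a),g(b)) \leq \lambda\,\varrho(a,b)$ for all $a,b \in Y$. Given $\eps>0$ I would set $\delta_{\eps} := (1-\lambda)\eps>0$ and claim that every $\varrho$-$\delta_{\eps}$-pseudo-trajectory $(a_j)_{j=0}^{\infty}$ is $\varrho$-$\eps$-shadowed by the $g$-orbit of its \emph{own} starting point $a_0$. Writing $e_j := \varrho(g^j(a_0),a_j)$, the triangle inequality together with the contraction hypothesis and the pseudo-trajectory condition gives, for every $j \geq 0$,
\[
e_{j+1} \leq \varrho(g^{j+1}(a_0),g(a_j)) + \varrho(g(a_j),a_{j+1}) < \lambda\, e_j + \delta_{\eps}.
\]

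A one-line induction then closes the argument. Since $e_0 = \varrho(a_0,a_0) = 0 < \tfrac{\delta_{\eps}}{1-\lambda}$, and since $e_j < \tfrac{\delta_{\eps}}{1-\lambda}$ implies $e_{j+1} < \lambda\tfrac{\delta_{\eps}}{1-\lambda} + \delta_{\eps} = \tfrac{\delta_{\eps}}{1-\lambda}$, I obtain $\varrho(g^j(a_0),a_j) = e_j < \tfrac{\delta_{\eps}}{1-\lambda} = \eps$ for all $j$, which is exactly the shadowing condition. I do not expect any genuine obstacle here: the geometric decay factor $\lambda$ absorbs the accumulated $\delta$-errors, so the pseudo-trajectory never drifts more than $\tfrac{\delta_{\eps}}{1-\lambda}$ from the true orbit of $a_0$. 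The only points requiring a little care are the bookkeeping of strict versus non-strict inequalities (which is harmless, since each pseudo-trajectory step contributes strictly less than $\delta_{\eps}$) and the observation that this estimate applies verbatim to $(\Kc(X),d_H)$ and to $\Fc_{\infty}(X)$ once the contractivity of $\com{f}$ and $\fuz{f}$ has been invoked, so that no separate treatment of the hyperspace or of the fuzzy space is needed.
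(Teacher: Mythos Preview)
Your proposal is correct and follows essentially the same approach as the paper: both reduce to the folklore fact that a contractive system has the shadowing property by taking $\delta_{\eps}=(1-\lambda)\eps$ and shadowing the pseudo-trajectory with the orbit of its own initial point, and both invoke the known equivalence (from \cite{Barnsley1988_book_fractals} and \cite{JardonSanSan2020_FSS_some}) that $(X,f)$, $(\Kc(X),\com{f})$ and $(\Fc_{\infty}(X),\fuz{f})$ are simultaneously contractive. The only cosmetic difference is that the paper tracks the slightly sharper inductive bound $d(f^j(x_0),x_j)\leq(1-\lambda^j)\eps$, whereas you use the fixed bound $\tfrac{\delta_{\eps}}{1-\lambda}=\eps$ directly; both are fine.
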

\begin{proof}
	It is folklore that if $(X,f)$ is a contractive dynamical system for $\lambda \in [0,1[$, then $(X,f)$ has the shadowing property by taking $\delta_{\eps} := (1-\lambda)\eps$ for each $\eps>0$. For the sake of completeness we will include its brief proof. Actually, given any $d$-$\delta_{\eps}$-pseudo-trajectory $(x_j)_{j=0}^{\infty}$ one can consider the point $x:=x_0$, so that we have $d(f^0(x),x_0)=d(x_0,x_0)=0=(1-\lambda^0)\eps$. Arguing inductively, if we now assume that $d(f^j(x),x_j)<(1-\lambda^j)\eps$ for some non-negative integer $j\geq 0$, then we have that
	\begin{align*}
		d(f^{j+1}(x),x_{j+1}) &\leq d(f^{j+1}(x),f(x_j)) + d(f(x_j),x_{j+1}) \leq \lambda d(f^j(x),x_j) + \delta_{\eps} \\[5pt]
		&\leq \lambda (1-\lambda^j)\eps + (1-\lambda)\eps = (1-\lambda^{j+1})\eps < \eps.
	\end{align*}
	The whole result follows from the fact that $(X,f)$ is contractive if and only if so is $(\Kc(X),\com{f})$, but also if and only if so is $(\Fc_{\infty}(X),\fuz{f})$, as proved in \cite[Section~III.7]{Barnsley1988_book_fractals} and \cite[Proposition~5.7]{JardonSanSan2020_FSS_some}.
\end{proof}

Since the system $(X,f)$ considered in Example~\ref{Exa_2:connected} is contractive, it is clear that Proposition~\ref{Pro:contractions} cannot be obtained for $(\Fc_{0}(X),\fuz{f})$, $(\Fc_{S}(X),\fuz{f})$ and $(\Fc_{E}(X),\fuz{f})$ in general. However, if we add a particular boundedness assumption we reach our desired result:

\begin{theorem}\label{The:contractions}
	Let $f:X\longrightarrow X$ be a continuous map acting on a metric space $(X,d)$. If $(X,f)$ is a contractive system and the set $f^k(X)$ is bounded in $(X,d)$ for some $k \in \NN$, then the extended dynamical systems $(\Fc_{0}(X),\fuz{f})$, $(\Fc_{S}(X),\fuz{f})$ and $(\Fc_{E}(X),\fuz{f})$ have the shadowing property. In particular, the previous conclusion holds whenever $(X,f)$ is a contractive system and $(X,d)$ is bounded itself.
\end{theorem}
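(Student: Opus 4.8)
The plan is to exploit two facts. First, that the Zadeh extension $\fuz{f}$ is non-expansive for each metric $\rho \in \{d_{0},d_{S},d_{E}\}$ (Lemma~\ref{Lem:almost.contraexpansive}(a)); and second, that the contractivity of $f$ together with the boundedness of $f^k(X)$ forces the iterates of $\fuz{f}$ to collapse the whole space $\Fc(X)$ onto itself. Write $\lambda \in [0,1[$ for a contraction constant of $(X,f)$ and $M := \diam_d(f^k(X)) < \infty$. Since $f$ is $\lambda$-Lipschitz and $f^{m}(X) = f^{m-k}(f^k(X))$ for $m \geq k$, I first record the estimate $c_m := \diam_d(f^m(X)) \leq \lambda^{m-k}M$, so that $\sum_{m \geq k} c_m \leq M/(1-\lambda) < \infty$. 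For the ``in particular'' clause it suffices to take $k=1$, since $f(X)\subseteq X$ is bounded whenever $X$ is.

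The key step is the following uniform collapsing estimate, which I regard as the heart of the proof: for every $n \geq k$ and every pair $u,v \in \Fc(X)$ one has $\rho(\fuz{f}^n(u),\fuz{f}^n(v)) \leq c_n$. By Proposition~\ref{Pro:fuz{f}} every level satisfies $[\fuz{f}^n(u)]_\alpha = f^n(u_\alpha) \subseteq f^n(X)$, so all levels of $\fuz{f}^n(u)$ and of $\fuz{f}^n(v)$ live inside the single set $f^n(X)$ of diameter $c_n$. For $\rho=d_{0}$ this already yields the bound through $d_{0} \leq d_{\infty} = \sup_\alpha d_H(f^n(u_\alpha), f^n(v_\alpha))$ (Proposition~\ref{Pro:fuzzy.metrics}), because any two non-empty subsets of a set of diameter $c_n$ are at Hausdorff distance $\leq c_n$. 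For $\rho \in \{d_{E},d_{S}\}$ I would argue through Proposition~\ref{Pro:Hausdorff}(a): given a point $(x,\alpha) \in \eend(\fuz{f}^n(u))$ with $\alpha>0$ one has $x \in f^n(u_0) \subseteq f^n(X)$; using that $v$ is normal I can pick $y \in f^n(v_1) \subseteq f^n(X)$, so that $(y,\alpha) \in \eend(\fuz{f}^n(v))$ (as $[\fuz{f}^n(v)](y) \geq 1 \geq \alpha$) and $\com{d}((x,\alpha),(y,\alpha)) = d(x,y) \leq c_n$, while the points with $\alpha=0$ lie in both endographs; the symmetric inclusion and the restriction of the same matching to the supports handle $d_{S}$. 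Conceptually, spatial contraction squeezes every fuzzy set into a tiny region around the common height-$1$ level, so the potentially large differences between membership profiles become invisible at distance $c_n$ — precisely the mechanism that failed in the unbounded Example~\ref{Exa_2:connected}.

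With these two ingredients I would obtain full shadowing by tracking a pseudo-trajectory with the orbit of its own initial point. Fix $\eps>0$, choose $N \geq k$ with $\sum_{m \geq N} c_m < \tfrac{\eps}{2}$ (possible by summability), and set $\delta := \tfrac{\eps}{2N}$. Let $(u^j)_{j=0}^\infty$ be any $\rho$-$\delta$-pseudo-trajectory and put $w := u^0$. Telescoping along the orbit gives, for each $j$,
\[
\rho(\fuz{f}^j(w),u^j) \leq \sum_{i=0}^{j-1} \rho\bigl(\fuz{f}^{\,j-1-i}(\fuz{f}(u^i)),\fuz{f}^{\,j-1-i}(u^{i+1})\bigr).
\]
For each summand with $m := j-1-i$, non-expansiveness bounds it by $\rho(\fuz{f}(u^i),u^{i+1}) < \delta$, and whenever $m \geq k$ the collapsing estimate bounds it in addition by $c_m$. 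Splitting the sum according to whether $m<N$ or $m\geq N$, bounding the first group by $\delta$ and the second by $c_m$, yields $\rho(\fuz{f}^j(w),u^j) \leq N\delta + \sum_{m \geq N} c_m < \eps$ for all $j$; that is, $(u^j)_j$ is $\rho$-$\eps$-shadowed by the $\fuz{f}$-orbit of $w$.

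The main obstacle is the collapsing estimate of the second paragraph, and specifically verifying it uniformly and simultaneously for $d_{0}$, $d_{S}$ and $d_{E}$. The delicate point is that $\fuz{f}$ preserves membership values, so one cannot contract the vertical ($\alpha$) direction at all; the argument must instead show that the vertical discrepancies are always dominated by the shrinking spatial diameter, which is exactly where normality (the non-emptiness of $v_1$) and the boundedness hypothesis enter. Once this estimate is secured, the telescoping argument is routine.
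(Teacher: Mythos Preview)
Your argument is correct and is in fact considerably cleaner than the paper's. Both proofs shadow every pseudo-trajectory by the orbit of its own initial term $u^0$, and both use the non-expansiveness of $\fuz{f}$ from Lemma~\ref{Lem:almost.contraexpansive}(a) to control the first $N$ steps. The difference lies in how the tail $j\geq N$ is handled. The paper first passes to the completion of $X$, invokes the Banach fixed-point theorem to obtain the fixed point $x^*$, and then argues geometrically that both $\fuz{f}^j(u^0)$ and $u^j$ are eventually trapped near $x^*$: for $\rho\in\{d_0,d_S\}$ this means tracking the supports inductively into $\Bc_d(x^*,\eps_1+\eps_2)$, while for $\rho=d_E$ a separate and rather involved induction on the endographs is needed, followed by a final reduction to undo the completion step. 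Your collapsing estimate $\rho(\fuz{f}^n(u),\fuz{f}^n(v))\leq\diam_d(f^n(X))$ replaces all of this by a single summable telescoping argument; note incidentally that the estimate already follows at once from $\rho\leq d_\infty$ together with the fact that every level of $\fuz{f}^n(u)$ and $\fuz{f}^n(v)$ lies in $f^n(X)$, so your direct endograph verification for $d_E,d_S$ is not even needed. Your route avoids the completeness reduction entirely and treats all three metrics uniformly. The paper's approach has the minor advantage of making explicit where the pseudo-trajectory is eventually trapped, but yours is shorter and structurally more transparent.
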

\begin{proof}
	Assume first that $(X,d)$ is a complete space. Thus, the Banach fixed-point theorem tells us that there exists a unique point, namely $x^* \in X$, fulfilling that $f(x^*)=x^*$. Let now $\eps>0$ be arbitrary but fixed. Since the system $(X,f)$ is contractive and $f(x^*)=x^*$, we can easily find two small enough positive values $\eps_1,\eps_2>0$ fulfilling that $0<\eps_2\leq \eps_1<\tfrac{\eps}{4}$ but also that
	\begin{equation}\label{eq:eps_1_2}
		f(\Bc_d(x^*,\eps_1+\eps_2)) \subset \Bc_d(x^*,\eps_1).
	\end{equation}
	Using again that $(X,f)$ is contractive, that $f(x^*)=x^*$, and this time that $f^k(X)$ is bounded for some $k \in \NN$, i.e.\ that $\diam_d(f^k(X))<\infty$, we can easily find some positive integer $N \in \NN$ fulfilling that
	\begin{equation}\label{eq:f^j(Y).subset.eps_1}
		f^j(X) \subset \Bc_d(x^*,\eps_1) \quad \text{ for all } j \geq N.
	\end{equation}
	Let $\rho \in \{ d_{0} , d_{S} , d_{E} \}$, fix $\delta_{\eps} := \tfrac{\eps_2}{N}$, and consider any $\rho$-$\delta_{\eps}$-pseudo-trajectory $(u^j)_{j=0}^{\infty}$ for $\fuz{f}$ in $\Fc(X)$. We will check that $(u^j)_{j=0}^{\infty}$ is $\rho$-$\eps$-shadowed by the $\fuz{f}$-orbit of $u := u^0$. We start by claiming that
	\begin{equation}\label{eq:claim.1}
		\rho(\fuz{f}^j(u),u^j) < j\delta_{\eps} \leq \eps_2 < \eps \quad \text{ for each } 0\leq j\leq N.
	\end{equation}
	Actually, we have that $\rho(\fuz{f}^{0}(u),u^0) = \rho(u^0,u^0) = 0$. Arguing inductively, if we now assume that the inequality $\rho(\fuz{f}^j(u),u^j)<j\delta_{\eps}$ holds for some $0\leq j< N$, by statement (a) of Lemma~\ref{Lem:almost.contraexpansive} we have that
	\begin{align*}
		\rho(\fuz{f}^{j+1}(u),u^{j+1}) &\leq \rho(\fuz{f}^{j+1}(u),\fuz{f}(u^j)) + \rho(\fuz{f}(u^j),u^{j+1}) \leq \rho(\fuz{f}^j(u),u^j) + \rho(\fuz{f}(u^j),u^{j+1}) \\[5pt]
		&< j\delta_{\eps} + \delta_{\eps} = (j+1)\delta_{\eps} \leq \eps_2 \leq \eps_1 < \tfrac{\eps}{4} < \eps.
	\end{align*}
	With \eqref{eq:claim.1} being proved, we now divide the proof in two cases to check that
	\begin{equation}\label{eq:claim.2}
		\rho(\fuz{f}^j(u),u^j)<\eps \quad \text{ for all } j > N.		
	\end{equation}
	\begin{enumerate}[--]
		\item \textbf{Case 1}: \textit{We have that $\rho \in \{ d_{0} , d_{S} \}$}. In this case we start by claiming that
		\begin{equation}\label{eq:u^j_0.subset.eps_1_2}
			u^j_0 \subset \Bc_d(x^*,\eps_1+\eps_2) \quad \text{ for all } j \geq N.
		\end{equation}
		Actually, for $j=N$ and since $[\fuz{f}^N(u)]_0 = f^N(u_0) \subset \Bc_d(x^*,\eps_1)$ by \eqref{eq:f^j(Y).subset.eps_1}, using \eqref{eq:claim.1} together with statement~(e) of Proposition~\ref{Pro:fuzzy.metrics} we have that
		\[
		d_H([\fuz{f}^N(u)]_0,u^N_0) \leq \rho(\fuz{f}^N(u),u^N) < N\delta_{\eps} = \eps_2,
		\]
		so that $u^N_0 \subset [\fuz{f}^N(u)]_0 + \eps_2 \subset \Bc_d(x^*,\eps_1) + \eps_2 \subset \Bc_d(x^*,\eps_1+\eps_2)$. Arguing inductively, if we now assume that $u^j_0 \subset \Bc_d(x^*,\eps_1+\eps_2)$ for some $j \geq N$, then by \eqref{eq:eps_1_2} we have that
		\[
		[\fuz{f}(u^j)]_0 = \com{f}(u^j_0) = f(u^j_0) \subset f(\Bc_d(x^*,\eps_1+\eps_2)) \subset \Bc_d(x^*,\eps_1).
		\]
		Using again Proposition~\ref{Pro:fuzzy.metrics} we obtain that $d_H([\fuz{f}(u^j)]_0,u^{j+1}_0) \leq \rho(\fuz{f}(u^j),u^{j+1}) < \delta_{\eps} \leq \eps_2$ and hence
		\[
		u^{j+1}_0 \subset [\fuz{f}(u^j)]_0 + \eps_2 \subset \Bc_d(x^*,\eps_1) + \eps_2 \subset \Bc_d(x^*,\eps_1+\eps_2).
		\]
		With \eqref{eq:u^j_0.subset.eps_1_2} being proved we can complete \textbf{Case 1} as follows. For each $j > N$ pick any pair of points $x_j \in [\fuz{f}^j(u)]_1$ and $y_j \in u^j_1$. It follows that $x_j \in [\fuz{f}^j(u)]_{\alpha} \subset [\fuz{f}^j(u)]_0 = f^j(u_0)$ and that $y_j \in u^j_\alpha \subset u^j_0$ for all $\alpha \in \II$, which implies, in particular, that $x_j,y_j \in \Bc_d(x^*,\eps_1+\eps_2)$ by \eqref{eq:f^j(Y).subset.eps_1} and \eqref{eq:u^j_0.subset.eps_1_2} respectively. In addition, note that given any level $\alpha \in \II$, then for every point $x \in [\fuz{f}^j(u)]_{\alpha} \subset f^j(u_0)$ we have that $x \in \Bc_d(x^*,\eps_1+\eps_2)$ by \eqref{eq:f^j(Y).subset.eps_1} and hence that
		\[
		d(x,u^j_{\alpha}) = \inf_{z \in u^j_{\alpha}} d(x,z) \leq d(x,y_j) \leq d(x,x^*) + d(x^*,y_j) < 2(\eps_1+\eps_2),
		\]		
		but also for every point $y \in u^j_{\alpha} \subset u^j_0$ we have that $y \in \Bc_d(x^*,\eps_1+\eps_2)$ by \eqref{eq:u^j_0.subset.eps_1_2} and hence that
		\[
		d(y,[\fuz{f}^j(u)]_{\alpha}) = \inf_{z \in [\fuz{f}^j(u)]_{\alpha}} d(y,z) \leq d(y,x_j) \leq d(y,x^*) + d(x^*,x_j) < 2(\eps_1+\eps_2).
		\]
		Since we had the inequalities $0<\eps_2\leq\eps_1<\tfrac{\eps}{4}$, we deduce that
		\begin{align*}
			\rho(\fuz{f}^j(u),u^j) &\leq d_{\infty}(\fuz{f}^j(u),u^j) = \sup_{\alpha\in\II} d_H([\fuz{f}^j(u)]_{\alpha},u^j_{\alpha}) \\[5pt]
			&\leq \max\left\{ \sup_{x \in [\fuz{f}^j(u)]_{\alpha}} d(x,u^j_{\alpha}) , \sup_{y \in u^j_{\alpha}} d(y,[\fuz{f}^j(u)]_{\alpha}) \right\} \leq 2(\eps_1+\eps_2) < \eps,
		\end{align*}
		which implies that \eqref{eq:claim.2} holds when $\rho \in \{ d_{0} , d_{S} \}$ and we have finished the proof for \textbf{Case 1}.
		
		\item \textbf{Case 2}: \textit{We have that $\rho=d_{E}$}. For this case, and without loss of generality, we will assume that we have chosen $\eps_2<1$. In addition, in the sequel we will repeatedly use the fact that
		\begin{equation}\label{eq:f(Y+delta).subset.f(Y)+delta}
			f(Y+\delta) \subset f(Y)+\delta \quad \text{ for every subset } Y \subset X \text{ and every positive value } \delta>0,
		\end{equation}
		which is a direct consequence of the fact that $(X,f)$ is contractive. We start by claiming that
		\begin{equation}\label{eq:end.subset.1}
			\eend(u^j) \subset \left(f^j(u_0)+j\delta_{\eps}\right)\times\II \ \cup \bigcup_{0\leq l\leq j-1} \left(f^l(X)+l\delta_{\eps}\right)\times[0,(l+1)\delta_{\eps}] \quad \text{ for all } 1 \leq j \leq N.
		\end{equation}
		Actually, since $\eend(u^0) = \eend(u) = \send(u) \ \cup \ X\times\{0\} \subset u_0\times\II \ \cup \ X\times\{0\}$, applying the map~$\fuz{f}$ it is not hard to check the inclusion $\eend(\fuz{f}(u^0)) \subset f(u_0)\times\II \ \cup \ X\times\{0\}$. Thus, using that $\com{d}_H(\eend(\fuz{f}(u^0)),\eend(u^1)) = d_{E}(\fuz{f}(u^0),u^1) < \delta_{\eps}$ by assumption, it follows that
		\[
		\eend(u^1) \subset \eend(\fuz{f}(u^0))+\delta_{\eps} \subset \left(f(u_0)+\delta_{\eps}\right)\times\II \ \cup \ X\times[0,\delta_{\eps}].
		\]
		This proves \eqref{eq:end.subset.1} for $j=1$. Arguing inductively, if we now assume that \eqref{eq:end.subset.1} holds for some positive integer $1\leq j< N$, applying the map $\fuz{f}$ to such formula it is not hard to check that
		\[
		\eend(\fuz{f}(u^j)) \subset f\left(f^j(u_0)+j\delta_{\eps}\right)\times\II \ \cup \bigcup_{0\leq l\leq j-1} f\left(f^l(X)+l\delta_{\eps}\right)\times[0,(l+1)\delta_{\eps}] \ \cup \ X\times\{0\}.
		\]
		Applying \eqref{eq:f(Y+delta).subset.f(Y)+delta} for the corresponding sets and positive values on the previous formula we get that
		\[
		\eend(\fuz{f}(u^j)) \subset \left(f^{j+1}(u_0)+j\delta_{\eps}\right)\times\II \ \cup \bigcup_{0\leq l\leq j-1} \left(f^{l+1}(X)+l\delta_{\eps}\right)\times[0,(l+1)\delta_{\eps}] \ \cup \ X\times\{0\}.
		\]
		Finally, since $\com{d}_H(\eend(\fuz{f}(u^j)),\eend(u^{j+1})) = d_{E}(\fuz{f}(u^j),u^{j+1}) < \delta_{\eps}$ by assumption, we have that
		\[
		\eend(u^{j+1}) \subset \eend(\fuz{f}(u^j))+\delta_{\eps} \subset \left(f^{j+1}(u_0)+(j+1)\delta_{\eps}\right)\times\II \ \cup \bigcup_{0\leq l\leq j} \left(f^l(X)+l\delta_{\eps}\right)\times[0,(l+1)\delta_{\eps}].
		\]
		With \eqref{eq:end.subset.1} being proved, we now claim that
		\begin{equation}\label{eq:end.subset.2}
			\eend(u^j) \subset \Bc_d(x^*,\eps_1+\eps_2)\times\II \ \cup \bigcup_{0 \leq l \leq N-1} \left(f^l(X)+l\delta_{\eps}\right)\times[0,(l+1)\delta_{\eps}] \quad \text{ for all } j \geq N.
		\end{equation}
		Actually, using that $f^N(u_0) \subset \Bc_d(x^*,\eps_1)$ by \eqref{eq:f^j(Y).subset.eps_1} and that $N\delta_{\eps}=\eps_2$, for $j=N$ we have that
		\begin{align*}
			\eend(u^N) &\overset{\eqref{eq:end.subset.1}}{\subset} \left(f^N(u_0)+N\delta_{\eps}\right)\times\II \ \cup \bigcup_{0\leq l\leq N-1} \left(f^l(X)+l\delta_{\eps}\right)\times[0,(l+1)\delta_{\eps}] \\
			& \ \ \subset \ \ \Bc_d(x^*,\eps_1+\eps_2)\times\II \ \cup \bigcup_{0 \leq l \leq N-1} \left(f^l(X)+l\delta_{\eps}\right)\times[0,(l+1)\delta_{\eps}].
		\end{align*}
		Arguing inductively, if we now assume that \eqref{eq:end.subset.2} holds for some $j\geq N$, applying the map $\fuz{f}$ to such formula it is not hard to check that
		\[
		\eend(\fuz{f}(u^j)) \subset f\left(\Bc_d(x^*,\eps_1+\eps_2)\right)\times\II \ \cup \bigcup_{0\leq l\leq N-1} f\left(f^l(X)+l\delta_{\eps}\right)\times[0,(l+1)\delta_{\eps}] \ \cup \ X\times\{0\}.
		\]
		Using \eqref{eq:eps_1_2} and applying $N$ times \eqref{eq:f(Y+delta).subset.f(Y)+delta} we thus have that
		\[
		\eend(\fuz{f}(u^j)) \subset \Bc_d(x^*,\eps_1)\times\II \ \cup \bigcup_{0\leq l\leq N-1} \left(f^{l+1}(X)+l\delta_{\eps}\right)\times[0,(l+1)\delta_{\eps}] \ \cup \ X\times\{0\},
		\]
		where the term $f^N(X)+(N-1)\delta_{\eps}$, corresponding to the element $l=N-1$ in the previous big union, fulfills that $f^N(X) \subset \Bc_d(x^*,\eps_1)$ by \eqref{eq:f^j(Y).subset.eps_1}. Hence, $f^N(X)+(N-1)\delta_{\eps} \subset \Bc_d(x^*,\eps_1+(N-1)\delta_{\eps})$, and the previous inclusion can be rewritten as follows
		\[
		\eend(\fuz{f}(u^j)) \subset \Bc_d(x^*,\eps_1+(N-1)\delta_{\eps})\times\II \ \cup \bigcup_{0\leq l\leq N-1} \left(f^l(X)+(l-1)\delta_{\eps}\right)\times[0,l\delta_{\eps}].
		\]		
		Finally, since $\com{d}_H(\eend(\fuz{f}(u^j)),\eend(u^{j+1})) = d_{E}(\fuz{f}(u^j),u^{j+1}) < \delta_{\eps}$ and $N\delta_{\eps}=\eps_2$, we have that
		\begin{align*}
			\eend(u^{j+1}) &\subset \eend(\fuz{f}(u^j))+\delta_{\eps} \subset \Bc_d(x^*,\eps_1+N\delta_{\eps})\times\II \ \cup \bigcup_{0\leq l\leq N-1} \left(f^l(X)+l\delta_{\eps}\right)\times[0,(l+1)\delta_{\eps}] \\
			&\subset \Bc_d(x^*,\eps_1+\eps_2)\times\II \ \cup \bigcup_{0\leq l\leq N-1} \left(f^l(X)+l\delta_{\eps}\right)\times[0,(l+1)\delta_{\eps}],
		\end{align*}
		With \eqref{eq:end.subset.2} being proved we can complete \textbf{Case 2} as follows. For each $j > N$ pick any pair of points $x_j \in [\fuz{f}^j(u)]_1$ and $y_j \in u^j_1$. Since $x_j \in [\fuz{f}^j(u)]_1 \subset [\fuz{f}^j(u)]_0 = f^j(u_0)$, using fact \eqref{eq:f^j(Y).subset.eps_1} we have that $x_j \in \Bc_d(x^*,\eps_1+\eps_2)$, that
		\[
		\{x_j\}\times\II \ \cup \ X\times\{0\} \subset \eend(\fuz{f}^j(u)) \subset \Bc_d(x^*,\eps_1)\times\II \ \cup \ X\times\{0\},
		\]
		and that $\Bc_d(x^*,\eps_1+\eps_2) \subset \{x_j\}+2(\eps_1+\eps_2)$ because $d(z,x_j) \leq d(z,x^*) + d(x^*,x_j) < 2(\eps_1+\eps_2)$ for each $z \in \Bc_d(x^*,\eps_1+\eps_2)$. In its turn, since $(l+1)N\delta_{\eps} \leq N\delta_{\eps} = \eps_2$ for all $0 \leq l \leq N-1$ and $\eps_2 < 1$ by assumption in \textbf{Case 2}, using \eqref{eq:end.subset.2} we necessarily have that $y_j \in \Bc_d(x^*,\eps_1+\eps_2)$, that
		\[
		\{y_j\}\times\II \ \cup \ X\times\{0\} \subset \eend(u^j) \subset \Bc_d(x^*,\eps_1+\eps_2)\times\II \ \cup \ X\times[0,\eps_2],
		\]
		and that $\Bc_d(x^*,\eps_1+\eps_2) \subset \{y_j\}+2(\eps_1+\eps_2)$ because $d(z,y_j) \leq d(z,x^*) + d(x^*,y_j) < 2(\eps_1+\eps_2)$ for all $z \in \Bc_d(x^*,\eps_1+\eps_2)$. From the previous inclusions, it follows that
		\begin{align*}
			\eend(\fuz{f}^j(u)) &\subset \Bc_d(x^*,\eps_1+\eps_2)\times\II \ \cup \ X\times\{0\} \subset \left(\{y_j\}+2(\eps_1+\eps_2)\right)\times\II \ \cup \ X\times\{0\} \\[5pt]
			&\subset \left(\{y_j\}\times\II \ \cup \ X\times\{0\}\right)+2(\eps_1+\eps_2) \subset \eend(u^j)+2(\eps_1+\eps_2),
		\end{align*}
		and that
		\begin{align*}
			\eend(u^j) &\subset \Bc_d(x^*,\eps_1+\eps_2)\times\II \ \cup \ X\times[0,\eps_2] \subset \left(\{x_j\}+2(\eps_1+\eps_2)\right)\times\II \ \cup X\times[0,2(\eps_1+\eps_2)] \\[5pt]
			&\subset \left(\{x_j\}\times\II \ \cup \ X\times\{0\}\right)+2(\eps_1+\eps_2) \subset \eend(\fuz{f}^j(u))+2(\eps_1+\eps_2).
		\end{align*}
		By statement (a) of Proposition~\ref{Pro:Hausdorff} we finally deduce that
		\[
		d_{E}(f^j(u),u^j) = \com{d}_H(\eend(f^j(u)),\eend(u^j)) \leq 2(\eps_1+\eps_2) < \eps,
		\]
		which implies that \eqref{eq:claim.2} holds for $\rho=d_{E}$ and we have finished the proof for \textbf{Case 2}.
	\end{enumerate}
	Assume now that $(X,d)$ is not a complete space. Consider its completion $(X^*,d^*)$ and identify $(X,d)$ as a proper dense subspace of $(X^*,d^*)$. Since $(X,d)$ is isometrically and densely embedded in $(X^*,d^*)$, if $f^*:(X^*,d^*)\longrightarrow(X^*,d^*)$ is the unique continuous extension of $f:(X,d)\longrightarrow(X,d)$ into $(X^*,d^*)$, then ${f^*}^k(X^*)$ is bounded in $(X^*,d^*)$. Moreover, the system $(X,f)$ is a subsystem of $(X^*,f^*)$, which can be checked to be contractive (recall that the extension of a Lipschitz map from a dense subspace is again Lipschitz with the same constant). As for $(X,d)$ and $(X^*,d^*)$, we can consider $\Fc(X)$ as a subset of $\Fc(X^*)$, and $(\Fc(X),\fuz{f})$ as a subsystem of $(\Fc(X^*),\fuz{f^*})$. By the first part of the proof we know that, given any metric $\rho^* \in \{ d_{0}^* , d_{S}^* , d_{E}^* \}$ and any value $\eps>0$ we can find $\delta_{\eps}>0$ such that every $\rho^*$-$\delta_{\eps}$-pseudo-trajectory $(u^j)_{j=0}^{\infty}$ for $\fuz{f^*}$ is $\rho^*$-$\eps$-shadowed by the $\fuz{f^*}$-orbit of $u^0$. Noticing that the metric space $(\Fc(X),\rho)$ is isometrically embedded in $(\Fc(X^*),\rho^*)$, where we are letting $\rho=\rho^*\res_{\Fc(X)}$, if we now choose a $\rho$-$\delta_{\eps}$-pseudo-trajectory $(u^j)_{j=0}^{\infty}$ in $\Fc(X)$ we have that
	\[
	\rho^*(\fuz{f^*}(u^j),u^{j+1}) = \rho^*(\fuz{f}(u^j),u^{j+1}) = \rho(\fuz{f}(u^j),u^{j+1}) < \delta_{\eps} \quad \text{ for all } 0\leq j< \infty,
	\]
	and we achieve that
	\[
	\rho(\fuz{f}^j(u^0),u^j) = \rho^*(\fuz{f}^j(u^0),u^j) = \rho^*(\fuz{f^*}^j(u^0),u^j) < \eps \quad \text{ for all } 0\leq j< \infty.
	\]
	Noticing that $d_{0}^*\res_{\Fc(X)}=d_{0}$, that $d_{S}^*\res_{\Fc(X)}=d_{S}$ and that $d_{E}^*\res_{\Fc(X)}=d_{E}$, the proof is finished.
\end{proof}

\begin{remark}
	Let us include two applications of Theorem~\ref{The:contractions}:
	\begin{enumerate}[(1)]
		\item Let $(\RR,f)$ be the system considered in Example~\ref{Exa_2:connected}, where $\RR$ is endowed with the usual $|\cdot|$-metric and $f(x)=\tfrac{1}{2}x$ for each $x \in \RR$. We proved there that $(\Fc_{0}(\RR),\fuz{f})$, $(\Fc_{S}(\RR),\fuz{f})$ and $(\Fc_{E}(\RR),\fuz{f})$ do not have the finite shadowing property. However, let $X \subset \RR$ be any $|\cdot|$-bounded set fulfilling that $f(X) \subset X$. It follows that the dynamical systems $(\Fc_{0}(X),\fuz{f})$, $(\Fc_{S}(X),\fuz{f})$ and $(\Fc_{E}(X),\fuz{f})$ have the shadowing property by Theorem~\ref{The:contractions}. In particular, given any positive value $r>0$ we could consider $X=\{0\}$, $X=]-r,0[$, $X=[-r,0[$, $X=]0,r[$, $X=]0,r]$, $X=\QQ \ \cap \ [-r,r]\setminus\{0\}$, or we could combine them by considering unions and intersections of these sets.
		
		\item Let $\CC$ be the set of complex numbers and let $|\cdot|$ denote the usual modulus there. Fix any $k \in \NN$ and, denoting by $i$ the imaginary constant and by $\exp(z)$ the exponential function, consider the subset $X=\{0\} \cup \{ r\exp(\tfrac{j\pi}{k}i) \ ; \ r>0 \text{ and } 1 \leq j \leq k \}$. In addition, define $d:X\times X\longrightarrow[0,\infty[$ as $d(z_1,z_2)=|z_1-z_2|$ when $\arg(z_1)=\arg(z_2)$ and $d(z_1,z_2)=|z_1|+|z_2|$ when $\arg(z_1)\neq\arg(z_2)$ for every pair $z_1,z_2 \in X$, where $\arg(0)=0$. Consider the map $f:X\longrightarrow X$ as $f(0)=0$, $f(r\exp(\tfrac{j\pi}{k}))=\tfrac{r}{2}\exp(\tfrac{(j+1)\pi}{k}i)$ for all $r>0$ and $1\leq j\leq k-1$, with $f(-r)=-\tfrac{r}{2}$ when $r \in \ ]0,1]$ and $f(-r)=-(1-\tfrac{3}{2^{l+2}})-\tfrac{r}{2^{2l+2}}$ when $r \in \ ]2^l,2^{l+1}]$ for some $l \in \NN_0$. Then, it is not hard to check that $(X,d)$ is an unbounded metric space, that $(X,f)$ is a contractive system, and that $f^j(X)$ is an unbounded set for $1 \leq j \leq k-1$ while $f^k(X)$ is a bounded set in $(X,d)$. By Theorem~\ref{The:contractions}, the corresponding systems $(\Fc_{0}(X),\fuz{f})$, $(\Fc_{S}(X),\fuz{f})$ and $(\Fc_{E}(X),\fuz{f})$ have the shadowing property.
	\end{enumerate}
\end{remark}

In view of Theorems~\ref{The:contraexpansive},~\ref{The:shadowing.E}~and~\ref{The:contractions}, we end the section posing a natural question:

\begin{problem}\label{Problem.1}
	Characterize the dynamical systems $(X,f)$ for which any of the extended dynamical systems $(\Fc_{0}(X),\fuz{f})$, $(\Fc_{S}(X),\fuz{f})$ or $(\Fc_{E}(X),\fuz{f})$ have the (resp.\ finite) shadowing property.
\end{problem}

\section{Conclusions}\label{Sec_6:conclusions}

In this paper, we have examined the interaction between several dynamical properties of a discrete system $(X,f)$ and its extension $(\Fc(X),\fuz{f})$ on the space of normal fuzzy sets $\Fc(X)$ and with respect to the supremum, Skorokhod, sendograph and endograph metrics. In particular, we have dealt with the notions of {\em contractive}, {\em expansive}, {\em expanding} and {\em positively expansive} systems, as well as with the notions of {\em chain recurrence}, {\em chain transitivity}, {\em chain mixing} and the {\em shadowing property}. Throughout this paper we have solved open problems posed in \cite{JardonSan2021_FSS_expansive,JardonSanSan2020_FSS_some}, provided counterexamples to \cite[Theorem~5]{BartollMaPeRo2022_AXI_orbit}, and established new results concerning the previous chain-type notions and the shadowing property.

We recall that this paper is a continuation of \cite{Lopez2026_IJFS_topological-I}, where, within the same fuzzy framework, we examined in depth several dynamical properties related to {\em transitivity}, {\em point-transitivity}, {\em recurrence}, and the notions of {\em Devaney chaos} and the {\em specification property}. Since our original motivation was to address all the dynamical properties considered in \cite{AlvarezLoPe2025_FSS_recurrence,BartollMaPeRo2022_AXI_orbit,JardonSan2021_FSS_expansive,JardonSan2021_IJFS_sensitivity,JardonSanSan2020_FSS_some,JardonSanSan2020_MAT_transitivity,MartinezPeRo2021_MAT_chaos} for the system $(\Fc(X),\fuz{f})$ with respect to the endograph metric~$d_{E}$, it only remains to investigate the notions of {\em Li-Yorke} and {\em distributional chaos} considered in \cite{MartinezPeRo2021_MAT_chaos}, and the notions of {\em sensitivity} studied in \cite{JardonSan2021_IJFS_sensitivity}. We have recently addressed these dynamical properties in \cite{AlvarezLo2026_IS_Li-Yorke}, in collaboration with other authors.

We would like to stress that our study focuses on the space $\Fc(X)$ of normal fuzzy sets, rather than on other commonly used fuzzy frameworks, since this is precisely the setting adopted in our main references \cite{AlvarezLoPe2025_FSS_recurrence,BartollMaPeRo2022_AXI_orbit,JardonSan2021_FSS_expansive,JardonSan2021_IJFS_sensitivity,JardonSanSan2020_FSS_some,JardonSanSan2020_MAT_transitivity,MartinezPeRo2021_MAT_chaos} mentioned above. For further details on the rationale behind this choice, we refer the reader to \cite[Section~5]{Lopez2026_IJFS_topological-I} and to the comments preceding Theorem~\ref{The:chains}. Moreover, and as noted in \cite{Lopez2026_IJFS_topological-I}, possible directions for future research include studying the dynamical properties of the system $(\Fc(X),\fuz{f})$ when $\Fc(X)$ is endowed with other metrics (see \cite{Huang2022_FSS_some}), as well as extending the analysis to the case where $X$ is a (not necessarily metrizable) uniform space (see \cite{JardonSanSan2023_IJFS_fuzzy,JardonSanSan2026_TA_transitivity}).

\section*{Funding}

The development of this paper was partially supported by
\[
\text{MCIN/AEI/10.13039/501100011033/FEDER, UE, Project PID2022-139449NB-I00.}
\]

\section*{Acknowledgments}

The author wants to thank Salud Bartoll, Nilson Bernardes Jr., F\'elix Mart\'inez-Jim\'enez, Alfred Peris, and Francisco Rodenas, for their valuable advice and careful readings of the manuscript. The author is also grateful to \'Angel Calder\'on-Villalobos and Manuel Sanchis for insightful discussions on the topic.

\newpage

{\footnotesize

}

{\footnotesize
$\ $\\

\textsc{Antoni L\'opez-Mart\'inez}: Universitat Polit\`ecnica de Val\`encia, Institut Universitari de Matem\`atica Pura i Aplicada, Edifici 8E, 4a planta, 46022 Val\`encia, Spain. e-mail: alopezmartinez@mat.upv.es
}

\end{document}